\documentclass[12pt]{amsart}

\usepackage{amssymb}
\usepackage{amsmath}
\usepackage{amsfonts}
\usepackage[all]{xy} 
\usepackage{mathrsfs,enumitem}
\usepackage[utf8]{inputenc}
\usepackage{graphicx} 
\usepackage{xcolor}
\usepackage{ulem} 
\usepackage{tikz-cd}

\newcommand{\disk}{\ensuremath{\mathbb{D}} } 
\newcommand{\riem}[1][]{\ensuremath{\Sigma_{#1}}} 
\newcommand{\riembase}{\ensuremath{\Sigma'}}

\newcommand{\sphere}{\overline{\mathbb{C}}}

\theoremstyle{plain}
        \newtheorem{theorem}{Theorem}[section]
        \newtheorem{lemma}[theorem]{Lemma}
        \newtheorem{proposition}[theorem]{Proposition}

\theoremstyle{definition}
        \newtheorem{definition}[theorem]{Definition}
        \newtheorem{example}{Example}[section]

\theoremstyle{remark}
    \newtheorem{remark}[theorem]{Remark}

\numberwithin{equation}{section} 
\numberwithin{figure}{section} 

\usepackage{fullpage}

\begin{document}

\author{Eric Schippers}
\author{Wolfgang Staubach}
\address{\newline
       Eric Schippers \newline
       Machray Hall, Dept. of Mathematics,
   University of Manitoba, \newline Winnipeg, MB
   Canada R3T 2N2}
       \email{eric.schippers@umanitoba.ca}

\address{\newline
       Wolfgang Staubach \newline
       Department of  Mathematics, Uppsala University, \newline
       S-751 06 Uppsala, Sweden}
       \email{wulf@math.uu.se}
\keywords{Conformal invariants, Field of one forms, Grunsky operator,  Modular invariants, Overfare of harmonic functions, Riemann surfaces, Teichm\"uller space}

 \subjclass[2020]{Primary: {53C56, 58A10, 30F15, 30F30, 32G15}, \linebreak  Secondary: {58C99}}

\title{A unified approach to conformal and modular invariants}

\begin{abstract}  
    In this paper we give a general family of conformal invariants associated to bordered Riemann surfaces endowed with boundary parametrizations, or equivalently compact surfaces endowed with conformal maps. Each invariant is specified by a field of one-forms over a Teichm\"uller space of infinite conformal type. The invariants are positive, and under certain conditions monotonic.
    It is shown that these conformal invariants can be viewed as generalized modular invariants on Teichm\"uller space and as functions on the rigged moduli space of Segal and Vafa. The construction uses an identification of Teichm\"uller space and the rigged moduli space, as well as analytic work of the authors showing that the transfer or ``overfare'' of harmonic functions sharing boundary values on a quasicircle is bounded. 
    
    Demanding invariance under various subgroups of the modular group -- equivalently, under the group of quasisymmetric reparametrizations of a sub-collection of borders -- generates conformal invariants. We show that a wide variety of conformal invariants can be obtained through various choices of the field of one-forms. These include modules of doubly-connected domains, period mappings obtained from harmonic measures, inequalities for higher-order conformal invariants, and the Grunsky inequalities and their recent generalizations to Riemann surfaces.     
\end{abstract}

\dedicatory{Dedicated to the memory of Ian Graham}

\maketitle

\begin{section}{Introduction}
 This paper gives a general framework for conformal invariants associated to conformal embeddings of disks into open Riemann surfaces. That is, invariants which contain information specifying the relative conformal geometry between the ambient surface and the embedded disks. These invariants can depend (for example) on the Taylor coefficients of the conformal embedding map of arbitrary order.\\ 
 
 In this paper we reformulate such relative invariance as a new kind of modular invariance on Teichm\"uller space. We also give explicit constructions of families of examples.\\  

 More precisely, we consider configurations of $k$ conformal maps $f_m:\disk \rightarrow \Omega_m \subset {\riem}_k$ into a Riemann surface ${\riem}_k$.  Here ${\riem}_k$ is a genus $g$ surface with $n-k$ borders homeomorphic to the circle for some $n \geq k$. A conformal invariant is a quantity $I({\riem}_k,\Omega_1,\ldots,\Omega_k,f_1,\ldots,f_k)$ such that 
 \begin{equation} \label{eq:relative_conf_inv_meaning} 
  I(G({\riem}_k),G(\Omega_1),\ldots,G(\Omega_k),G \circ f_1,\ldots, G \circ f_k) = I({\riem}_k,\Omega_1,\ldots,\Omega_k,f_1,\ldots,f_k) 
 \end{equation}
 for any biholomorphism $G$. These are ``relative'' conformal invariants, in the sense that they depend not just on the surface but also how they relate to each other. For a survey of this relative point of view, its roots in geometric function theory and Teichm\"uller theory see \cite{Minda_confinv}, and our survey \cite{Sasha_paper}.

 The family of invariants in this paper unifies many kinds of invariants. They encompass for example hyperbolic reduced module (and Schwarz lemma); the module of an annulus; the Grunsky operator and its generalizations to surfaces of arbitrary finite genus and finite number of boundary curves; the period mapping of harmonic measures in arbitrary genus; and expressions in higher order derivatives of mapping functions. \\

 Relative conformal invariants were investigated by the first author in e.g. \cite{Sch_confinv_Janal}. In particular, an infinite-dimensional family of bounded, monotonic invariants of configurations of nested simply-connected domains was obtained in \cite{Minda_confinv, Schippersinvariants}, which as special cases includes old and new inequalities for bounded univalent maps. Each invariant arises from a choice of quadratic differential. This was an extension of an approach to inequalities of Z. Nehari for maps into a Riemann surface, which derived inequalities from solutions to the Dirichlet problem \cite{Nehari_paper} involving singular holomorphic functions; Nehari's method corresponds to the case that the quadratic differentials in \cite{Minda_confinv, Schippersinvariants} are squares of one-forms \cite{Minda_confinv}.  By tying the quadratic differential to the freely varying outer domain, the inequalities were made manifestly conformally invariant. \\

 The invariants in the present paper are a variation of the invariants in \cite{Minda_confinv, Schippersinvariants}. Here we restrict to the special case that the quadratic differentials are squares of one-forms as in Nehari \cite{Nehari_paper}, but retain the conformally invariant formulation obtained by tying the one-forms to the boundary. However, more importantly, we place them in the context of Teichm\"uller theory and modular invariants.  This requires extending the invariants to domains bounded by quasicircles, which introduces analytic issues arising from treating boundary value problems on quasicircles. These issues were resolved by the authors in a series of papers e.g \cite{EMS_survey,Schippers_Staubach_scattering_I,Schippers_Staubach_scattering_II}. The boundary value problems are associated to a kind of geometric scattering theory developed in \cite{Schippers_Staubach_scattering_III,Schippers_Staubach_scattering_IV}. 
 It also required a geometric and analytic result of D. Radnell and E. Schippers \cite{RadnellSchippers_monster} which identifies the rigged moduli space of G. Segal and C. Vafa with Teichm\"uller space. For the role of the Segal--Vafa moduli space in two-dimensional conformal field theory see Y.-Z. Huang \cite{Huang_book,Huang_open_problems}, and for a general discussion of the consequences of its relation to Teichm\"uller space see \cite{RSS_Lepowsky_survey,ThurstonEW}.\\

 There are two main results. The {\bf first result} is that this kind of relative conformal invariance is equivalent to  functions on Teichm\"uller space which are invariant under certain subgroups of the modular group. Namely, given a Riemann surface $\riem$ with $n \geq 0$ borders homeomorphic to the circle, we consider those elements of the modular group which are the identity on $k$ borders, but not necessarily on the remaining ``free'' boundaries.  Invariants under this subgroup of the modular group are in one-to-one correspondence with conformal invariants of the relative type  (\ref{eq:relative_conf_inv_meaning}) above.\\ 
 
 This correspondence is formulated with the help of a result of Radnell and the first author {\cite{RadnellSchippers_monster}} which says that the Teichm\"uller space and the Segal-Konstevich-Vafa-Friedan-Shenker moduli spaces are identified up to a discontinuous group action. Through this identification we are able to identify the modular groups with groups of quasisymmetric reparametrizations of the boundary. The quasisymmetries are a natural completion of the group of diffeomorphisms of the circle. This allows us here to greatly generalize the classical identification of modular invariants with conformal invariants depending on a single surface.

 The largest modular group contains one copy of the group of quasisymmetries of the circle. This can be thought of as a completion of the diffeomorphism group of the circle (where the completion is required to make the geometric connection to Teichm\"uller theory \cite{RadnellSchippers_monster,Sasha_paper}.) The quasisymmetries act by reparametrizations of the boundary curves. By demanding invariance under reparametrization of a particular boundary, one makes that boundary ``free''. If a boundary is free, then the invariant does not depend on how that boundary sits in the outer surface.

 The {\bf second result} is that we obtain an explicit family of monotonic and positive invariants. These are obtained as follows. For a fixed type of Riemann surface (in this paper, $g$ handles with $n$ boundary curves), consider the bundle of Riemann surfaces over Teichm\"uller space, where each fiber is a representative surface in the Teichm\"uller equivalence class. This is the Teichm\"uller curve. Consider the vector bundle over Teichm\"uller space whose fibres are the Hilbert space of $L^2$ holomorphic one-forms on each surface. From each section of this vector bundle satisfying certain conditions which will be made explicit, we obtain a modular invariant.

 The construction of this family requires that a process we call overfare {(introduced in  \cite{Schippers_Staubach_scattering_I,Schippers_Staubach_scattering_II}}) is well-defined and bounded. Overfare was studied by the authors in association with a scattering theory for one-forms, and is defined as follows. Given a Riemann surface split by a family of quasicircles, and an $L^2$ harmonic one-form on one side of a quasicircle, there is an $L^2$ one-form on the other side with the same boundary values and specified cohomology, and the map from one to the other is bounded. This was proven by the authors in \cite{Schippers_Staubach_scattering_I,Schippers_Staubach_scattering_II}. 
 \\

 We would like to emphasize that the aforementioned analytic and geometric results, such as boundedness of the overfare operator and the Teichm\"uller space/rigged moduli space correspondence, pave the way for the proofs of the main theorems in this paper. \\

 In summary, this paper marks a significant breakthrough in addressing a problem posed in \cite[Problem 4.1]{Sasha_paper}, which asks how to extend the invariants of Theorem \ref{th:its_Nehari} (more precisely, their generalizations to arbitrary quadratic differentials, obtained in \cite{Schippersinvariants}) to the moduli spaces of configurations bounded by quasicircles. This problem has both a geometric aspect and an analytic one. Geometrically, one needs to make sense of the definition on Teichm\"uller space or the rigged moduli space, and give the geometric meaning of the objects in the definition. Analytically, one must deal with the irregularity of the curves, which was accomplished in \cite{Schippers_Staubach_scattering_I,Schippers_Staubach_scattering_II}.

The paper is organized as follows. In Sections 2.1-2.5 we set up the stage for the conformal and modular invariants discussed in the subsequent sections by give an overview of spaces of harmonic and holomorphic one-forms, Teichm\"uller spaces, Teichm\"uller  curves, Riemann surfaces with caps and rigged moduli spaces and modular groups. In Section \ref{se:modular_invariants_are_conformal_invariants}
 we prove a correspondence between functions on Teichm\"uller space which are invariant under subgroups of the modular groups and conformal invariants on the moduli space of surfaces and conformal maps with free boundaries. At the root of this is the correspondence between Teichm\"uller space and the rigged moduli space. This shows how modular invariance under the groups reparametrizations of particular boundary curves renders these boundaries ``free''. 
 In Section \ref{se:invariants_from_sections} we describe how to get specific conformal invariants from norms of sections of the vertical cotangent bundle, and demonstrate the equivalence of relative invariance and modular invariance. In Section \ref{se:sections_from_data} we show how to obtain sections from particular data, and a general construction of sections leading to positive modular invariants, which are also under certain conditions monotonic.  We see that if a particular boundary is a trajectory of the one-forms then the quantity is invariant under the group of reparametrizations of that boundary.  In Section \ref{se:equality} we investigate the case that the lower bound of zero is attained. Section \ref{se:Schiffer_operators} defines certain integral operators of Schiffer, and in Section \ref{se:Faber-Tietz} we define certain forms associated to conformal maps into a compact surface called Faber-Tietz forms, obtained by the first author with Shirazi \cite{SchiShi_Faber}. The Schiffer operators and the Faber-Tietz forms are instrumental in the construction of many of the examples.  Finally, Section \ref{se:examples} is dedicated to specific examples of conformal invariants which can be described using the method of this paper. \\

 {\bf{Acknowledgements.}} The first author was partially supported by the National Sciences and Engineering Research Council of Canada. The second author is grateful to Andreas Strömbergsson for partial financial support through a grant from Knut and Alice Wallenberg Foundation. We are also grateful to Eveliina Peltola for discussions regarding an early draft of this paper.
\end{section}

\begin{section}{Moduli spaces and modular groups}
 \label{se:definitions_notation_etc}
 \begin{subsection}{Preliminaries}

 We define some function spaces on Riemann surfaces. In this section $R$ denotes a generic Riemann surface, which we take to be either a compact Riemann surface possibly with punctures, or a surface of genus $g$ with $n$ borders homeomorphic to $\mathbb{S}^1$, again possibly with punctures. Here and in what follows, when we say a surface is compact, we mean that it has no boundary, except possibly the specified punctures. 

 A {\it quasiconformal map} $F:R \rightarrow R_1$ between Riemann suraces $R$ and $R_1$ is a deformation of the complex structure. The precise definition is as follows.  We say that a function $f$ on an open subset $U$ of the plane $\mathbb{C}$ is absolutely continuous on lines if on every rectangle $[a,b] \times [c,d] \subset U$, $f(x,y)$ is absolutely continuous in $y$ for almost every fixed $x$ in $[a,b]$ and is absolutely continuous in $x$ for almost every $y \in [c,d]$.  We say that a map $F:R \rightarrow R_1$ between Riemann surfaes is absolutely continuous on lines if it is absolutely continuous on lines in local coordinates for every choice of coordinates on $R$ and $R_1$. In particular, the partial derivatives exist almost everywhere and we can make the following definition.
 \begin{definition}\label{defn:QC} A quasiconformal map is 
 an orientation-preserving homeomorphism which is absolutely continuous on lines, and such that 
 \[ \left\|  \frac{\overline{\partial} F}{\partial F}  \right\|_{L^\infty} <1.  \]
 We shall denote the set of quasiconformal self-maps of $R$ by $\mathrm{QC}(R)$.
 \end{definition}
 
 A quasiconformal map has a unique continuous (indeed homeomorphic) extension to the punctures and borders. 

 Let $\mathbb{S}^1$ denote the circle, which we identify with the boundary of the unit disk $\disk = \{ z\,:\, |z| <1 \}$. 
\begin{definition}\label{def: QS}
A {\it quasisymmetry} of the circle $\mathbb{S}^1$ is an orientation-preserving homeomorphism which is the boundary values of a quasiconformal map of $\disk$.\\
  \end{definition}

  Alternatively a homeomorphism $h:\mathbb{S}^1 \to \mathbb{S}^1$ is a quasisymmetry if there is a constant $k>0$, such that for every $\alpha$, and every $\beta$ not equal to a multiple of $2\pi$, the inequality
 \[  \frac{1}{k} \leq \left| \frac{h(e^{i(\alpha+\beta)})-h(e^{i\alpha})}{h(e^{i\alpha})-h(e^{i(\alpha-\beta)})} \right|
    \leq k \]
 holds. The two definitions given above are equivalent \cite{Lehto_book}. The orientation-preserving (orientation-reversing) quasisymmetries will be denoted by $\mathrm{QS}_+(\mathbb{S}^1)$ ($\mathrm{QS}_-(\mathbb{S}^1)$).

 Assume that $\Gamma$ is a border of a Riemann surface $R$ homeomorphic to $\mathbb{S}^1$. Any such border has a collar neighbourhood $U$ and a biholomorphism $\psi:U \rightarrow \mathbb{A}_{r,1}$ where $\mathbb{A}_{r,1} = \{ z : r<|z|<1 \}$ for some $r$ such that $0<r<1$; this biholomorphism extends by Carath\'eodory's theorem to a homeomorphism from $\Gamma$ to $\mathbb{S}^1$.  
 We say that $\phi:\mathbb{S}^1 \rightarrow \Gamma$ is an  an orientation-preserving/orientation-reversing quasisymmetry if the map $\psi \circ \phi$ is an orientation-preserving/orientation-reversing quasisymmetry of $\mathbb{S}^1$. 

 On any Riemann surface,  define the dual of the almost 
 complex structure,  $\ast$ in local coordinates $z=x+iy$,  by 
 \[  \ast (a\, dx + b \, dy) = a \,dy - b \,dx. \]
This is independent of the choice of coordinates.
 Let $U$ be an open subset of a Riemann surface $R$. 
 Denote by $L^2(U)$ the set of one-forms $\omega$ on an open set $U$ which satisfy
\[   \iint_U \omega \wedge \ast \overline{\omega} < \infty  \]
(observe that the integrand is positive at every point, as can be seen by writing the expression in local coordinates).  
This is a Hilbert space with respect to the inner product
\begin{equation} \label{eq:form_inner_product}
 (\omega_1,\omega_2) =  \iint_U \omega_1 \wedge \ast \overline{\omega_2}.
\end{equation}

\begin{definition}\label{defn:bergman spaces}
The  {\it Bergman space} of holomorphic one forms is 
\begin{equation}
    \mathcal{A}(U) = \{ \alpha \in L^2(U) \,:\, \alpha \ \text{holomorphic} \}.
\end{equation} 
 The anti-holomorphic Bergman space is denoted $\overline{\mathcal{A}(U)}$.   We will also denote 
\begin{equation}
    \mathcal{A}_{\mathrm{harm}}(U) =\{ \alpha \in L^2(U) \,:\, \alpha \ \text{harmonic} \}.
\end{equation}
\end{definition}
We also consider one-forms which have zero boundary periods.
\begin{definition} \label{de:semi_exact}
 {{Let $\riem$ be a bordered surface of type $(g,n)$.}} We say that an $L^2$ one-form $\alpha \in \mathcal{A}_{\mathrm{harm}}(\riem)$ is {\it semi-exact} if for any simple closed curve $\gamma$ isotopic to a boundary curve $\partial_k \riem$, 
 \[ \int_{\gamma}  \alpha =0. \]
\end{definition}
 \end{subsection}
\begin{subsection}{Teichm\"uller space and Teichm\"uller curve}
 In this section we define the moduli spaces. $R$ continues to denote a generic Riemann surface; either compact or with $n$ borders homeomorphic to $\mathbb{S}^1$, possibly with punctures in either case. Teichm\"uller space is defined for a greater variety of surfaces but this paper concerns only these cases. 
 
 The Teichm\"uller space of a surface $R$ is defined as follows. We say that a quasiconformal map $G:R \rightarrow R$ is {\it homotopic to the identity rel boundary} if it is homotopic to the identity via a homotopy which is the identity on the punctures and border.  
\begin{definition} \label{de:Teich_space_and_eq_rel} Let $R$ be a Riemann surface with $g$ handles, $n$ borders homeomorphic to $\mathbb{S}^1$, and possibly a finite number of punctures. Given two quasiconformal deformations $F_1: R\to R_1$ and $F_2: R\to R_2$  we say that they are {\it Teichm\"uller equivalent} if there is a conformal map $\sigma:R_1 \rightarrow R_2$ such that $F_2^{-1} \circ \sigma \circ F_1$ is homotopic to the identity rel boundary. The equivalence class of $F_1$ will be denoted by $[R,F_1,R_1].$

 The {\it Teichm\"uller space} of $R$ is then defined as 
 \[  \mathscr{T}(R) = \{ (R,F_1,R_1) \}/\sim, \]
 where $\sim$ is the Teichm\"uller equivalence relation.
 \end{definition}

 We remark that $F:R \rightarrow R_1$ is often referred to as a {\it marking map} (and $R_1$ is referred to as {\it a marked surface}), especially in the case that $R_1$ is compact with punctures. In that special case, this map  can be thought of as specifying a basis of generators for the homology group of $R_1$.\\ 

 The {\it Teichm\"uller curve} $\mathcal{T}(R)$ is a holomorphic fiber space over $\mathscr{T}(R)$,  with a holomorphic fibration (projection) map $\mathfrak{B}:\mathcal{T}(R) \rightarrow \mathscr{T}(R)$ with the property that the fiber $\mathfrak{B}^{-1}(q)$ over $q \in \mathscr{T}(R)$ is a specific Riemann surface $R_1$ in the equivalence class of marked surfaces determined by $q$, in which case $q=[R,F_1,R_1]$ for some $F_1:R \rightarrow R_1$. Elements of $\mathcal{T}(R)$ can be represented by pairs $(q,z)$ where $q \in \mathscr{T}(R)$ and $z \in \mathfrak{B}^{-1}(q)$. The Teichm\"uller curve is a Banach manifold and $\mathfrak{B}$ is a holomorphic fiber map, see \cite{Lehto_book, Nag_book}.\\  

 The vertical tangent bundle $T_{\textnormal {vert}} \mathcal{T}(R)$ is the space of tangent vectors tangent to the fibers, and the vertical cotangent bundle $T_{\textnormal {vert}}^* \mathcal{T}(R)$ is its dual space. These can be thought of as elements of the tangent and cotangent bundles $T {\riem}^q$ and $T^* {\riem}^q$ respectively, where ${\riem}^q = \mathfrak{B}^{-1}(q)$ is the surface corresponding to any fixed fiber over a specific $q \in \mathscr{T}(R)$.  These refer to the complex pure tangent spaces and cotangent spaces; that is, vectors and one-forms given in a local coordinate $z$ by $h(z) \partial/\partial z$ and $g(z) dz$ respectively.   \\

 Let $A^2(T_{\textnormal {vert}}^* \mathcal{T}(R))$ denote the sections of the vertical cotangent bundle 
 \[ \hat{s}:\mathcal{T}(R) \rightarrow T_{\textnormal {vert}}^* \mathcal{T}(R)  \]
 which are holomorphic and such that the restriction to any vertical cotangent space is in $L^2$. More precisely, if $s(q)$ is defined by $s(q)(z) = \hat{s}(q,z)$ then $s(q)$ is an element of the Bergman space of $\mathfrak{B}^{-1}(q)$.
 
 \begin{definition} \label{de:field_of_oneforms} Given a Riemann surface $R$ and a section
  $\hat{s} \in A^2(T_{\textnormal {vert}}^* \mathcal{T}(R))$ we call the family $s$ of one forms 
  \[  s(q) \in \mathcal{A}(\mathfrak{B}^{-1}(q))  \ \ \ \ q \in \mathscr{T}(R)  \] defined by 
    \begin{equation} \label{eq:field_of_oneforms_defn}
   s(q)(z) = \hat{s}(q,z)  
 \end{equation}
 a {\it field of one-forms} over Teichm\"uller space $\mathscr{T}(R)$.  
 \end{definition}
 \begin{remark} 
  We can also describe a field of one forms in the following way. Consider the  bundle $\pi:E \rightarrow \mathscr{T}(R)$ whose fibers are $\pi^{-1}(q) = \mathcal{A}(\mathfrak{B}^{-1}(q))$. Then
  \begin{align*}
   s: \mathscr{T}(R) & \rightarrow E\\
   q \mapsto s(q)
  \end{align*}
  is a section of $E$.  
 \end{remark}
 
 Summarizing the notation that will be used in this connection:
 \begin{itemize}
  \item $\mathscr{T}(R)$ is the Teichm\"uller space of $R$; 
  \item $\mathcal{T}(R)$ is the Teichm\"uller curve of $R$, with fibre map $\mathfrak{B}:\mathcal{T}(R) \rightarrow \mathscr{T}(R)$; 
  \item $T_{\textnormal {vert}}\mathcal{T}(R)$ is the vertical tangent bundle;
  \item $T_{\textnormal {vert}}^* \mathcal{T}(R)$ is the vertical cotangent bundle;
  \item $A^2(T_{\textnormal {vert}}^* \mathcal{T}(R))$ are the holomorphic sections of $T_{\textnormal {vert}}^* \mathcal{T}(R)$ whose restrictions to each fixed vertical fiber are $L^2$.
 \end{itemize}  

\end{subsection}

\begin{subsection}{Riemann surfaces with caps and conformal maps}\label{Riemann surfaces with caps}
 We consider configurations of Riemann surfaces with ``caps''.  These are as follows.
 Let $\mathscr{R}$ be a compact surface with punctures $p_1,\ldots,p_n$. Let $\Omega_1,\ldots,\Omega_n$ be simply connected quasidisks in $\mathscr{R}$ containing $p_1,\ldots,p_n$ respectively, whose closures do not intersect. Let $\Omega = \Omega_1 \cup \cdots \cup \Omega_n$ and let $\riem$ be the complement of the closure of $\Omega$ in $\mathscr{R}$.  
 The letters $\mathscr{R}$, $\Omega$, and $\riem$ will have this meaning consistently in the remainder of the paper. 

 We may obtain capped surfaces from a bordered surface $\riem$ by sewing disks on via boundary parametrizations, as will be described in the next section.  In the reverse direction, we can start with $\mathscr{R}$ and mappings $f=(f_1,\ldots,f_n)$ and remove the closures of the images $\Omega_m=f_m(\disk)$ to obtain the bordered surface $\riem$.  
 
 When removing some caps but not others, we use the following notation.  For fixed $k=1,\ldots,n-1$ we define the surface ${\riem}_k$ to be 
 \[ {\riem}_k := \mathscr{R} \backslash \mathrm{cl} \, \left[ \Omega_{k+1} \cup \cdots \cup \Omega_n\right]. \]
 We also set ${\riem}_0 = \riem$ and ${\riem}_n=\mathscr{R}$. Thus we have the chain of inclusions 
 \[  \riem={\riem}_0 \subset {\riem}_1 \subset \cdots \subset {\riem}_n = \mathscr{R}.  \]
 Of course, one could remove the caps in any order, but we fix an ordering to simplify notation. All of the results in this paper hold for an arbitrary ordering, since the choice of ordering has no bearing on the proofs.\\  

  {\bf The following notational conventions will be in force throughout the paper.} 
 \begin{itemize}
     \item $\mathscr{R}$ will be a compact Riemann surface with punctures $p_1,\ldots,p_n$;
     \item $\Omega= \Omega_1 \cup \cdots \cup \Omega_n$ will be the caps, as described above;
     \item $\riem$ will be the complement of the closures caps in $\mathscr{R}$;
     \item ${\riem}_0 \subset {\riem}_1 \subset \cdots \subset {\riem}_{n-1} \subset {\riem}_n$ will be the chain of Riemann surfaces obtained by adding one cap at a time as above.
 \end{itemize}
 We will use primes when more than one such configuration 
 is involved, e.g. $\mathscr{R}'$, $p_k'$, $\Omega_k'$, etc. 
\end{subsection}
\begin{subsection}{The rigged moduli space}\label{rigged moduli space}
 We will also consider the Friedan-Shenker-Segal-Vafa moduli space, which we call the rigged moduli space. This appears in conformal field theory; here, we have chosen the analytic conditions so that a connection between the moduli space and Teichm\"uller space can be drawn, as in \cite{RadnellSchippers_monster}. For a discussion of the analytic choice and its ramifications see \cite{RSS_Lepowsky_survey}. 
 
 There are two equivalent models of the rigged moduli space, one involving Riemann surfaces with boundary and parametrizations of the boundary curves, and the other involving compact Riemann surfaces with punctures and conformal maps into a neighbourhood of the punctures. We refer to these two models as the {\it border} and {\it puncture model} respectively.\\ 
 
 We consider pairs $(\riem,\phi)$ where $\riem$ is a Riemann surface with $n$ borders homeomorphic to $\mathbb{S}^1$, and $\phi=(\phi_1,\ldots,\phi_n)$ is an $n$-tuple of orientation-reversing quasisymmetric parametrizations $\phi_k:\mathbb{S}^1 \rightarrow \riem$.   Call such a pair a {\it rigged Riemann surface}.
 \begin{definition}[Rigged moduli space, border model]
 We say that two pairs of rigged surfaces are equivalent $(\riem,\phi) \sim ({\riem}',{\phi}')$ if there is a biholomorphism $\sigma:\riem \rightarrow {\riem}'$ such that $$\phi_k'=\sigma \circ \phi_k$$ for $k=1,\ldots,n$. The {\it rigged moduli space} is 
 \[  \widetilde{\mathcal{M}}(g,n) = \{ (\riem,\phi) \}/\sim   \]
 and we denote its elements by $[\riem,\phi]$. 

 \end{definition}
 The puncture model is set up as follows.
 Fix a compact surface $\mathscr{R}$ of genus $g$ with $n$ punctures $p_1,\ldots,p_n$, which are given a definite ordering.\\
 
 A {\it rigging} is a collection of conformal maps $f=(f_1,\ldots,f_n)$ $f_k:\disk \rightarrow \mathscr{R}$ such that $f_k(0)=p_k$ for all $k$, and such that the closures of the images of $f_k$ do not intersect. We also assume that these maps are quasiconformally extendible to an open neighbourhood of the closure of $\disk$. 

 \begin{definition}[Rigged moduli space, puncture model]
 We say that a pair of punctured surfaces with riggings $(\mathscr{R},f)$ and $(\mathscr{S},g)$ are equivalent if there is a conformal map $\sigma:\mathscr{R} \rightarrow \mathscr{S}$ preserving the ordering of the punctures such that 
 \[   g_k= \sigma \circ f_k       \] 
 for $k=1,\ldots,n$. The rigged moduli space in this model is 
 \[  \widetilde{\mathcal{M}}_P(g,n) = \{ (\mathscr{R},f) \}/\sim.   \]
 \end{definition}
 
 Equivalence classes will be denoted by $[\mathscr{R},\Omega,f]$.  Although the notation is redundant since $\Omega$ can be recovered from $f$,  
  it will be convenient to include the caps $\Omega$ in the notation.\\ 
  
 These two moduli spaces are naturally in one-to-one correspondence. We give an outline of the correspondence  in the present formulation; for proofs see \cite{RadnellSchippers_monster}. 
 
 Fix a base surface $\riem$ with $n$ borders and $g$ handles as above, and a rigging $\phi=(\phi_1,\ldots,\phi_n)$. 
 We can sew copies of the disk  by identifying points $p \in \mathbb{S}^1$ with $\phi_k(p)$ in $\partial_k \riem$ to obtain a topological surface.  This new surface has a unique complex structure which agrees with that of $\riem$ and the copies of the disk \cite[Theorems 3.2 and 3.3]{RadnellSchippers_monster}.
 Remove the point $0$ in each copy of the disk to obtain a punctured Riemann surface $\mathscr{R}$.  
 The resulting seams are quasicircles, and each $\phi_k$ has a holomorphic and bijective extension to a map $f_k$ from $\disk$ to $\mathscr{R}$, and we label the image by $\Omega_k$ and set $p_k = f_k(0)$.  This map is quasiconformally extendible. 

 This induces a well-defined bijection 
 \begin{align*}
     \Phi:\widetilde{\mathcal{M}}_B(g,n) & \rightarrow \widetilde{\mathcal{M}}_P(g,n) \\
     [\riem,\phi] & \rightarrow [\mathscr{R},\Omega,f]
 \end{align*}
 where $\mathscr{R}$ and $f$ are obtained from $\riem,\phi$ as above (see Figure \ref{rigged_to_rigged}).
 \begin{figure}[h] 
  \caption{Correspondence of border and puncture models pf rigged moduli space}\label{rigged_to_rigged} 
\includegraphics[width=12cm]{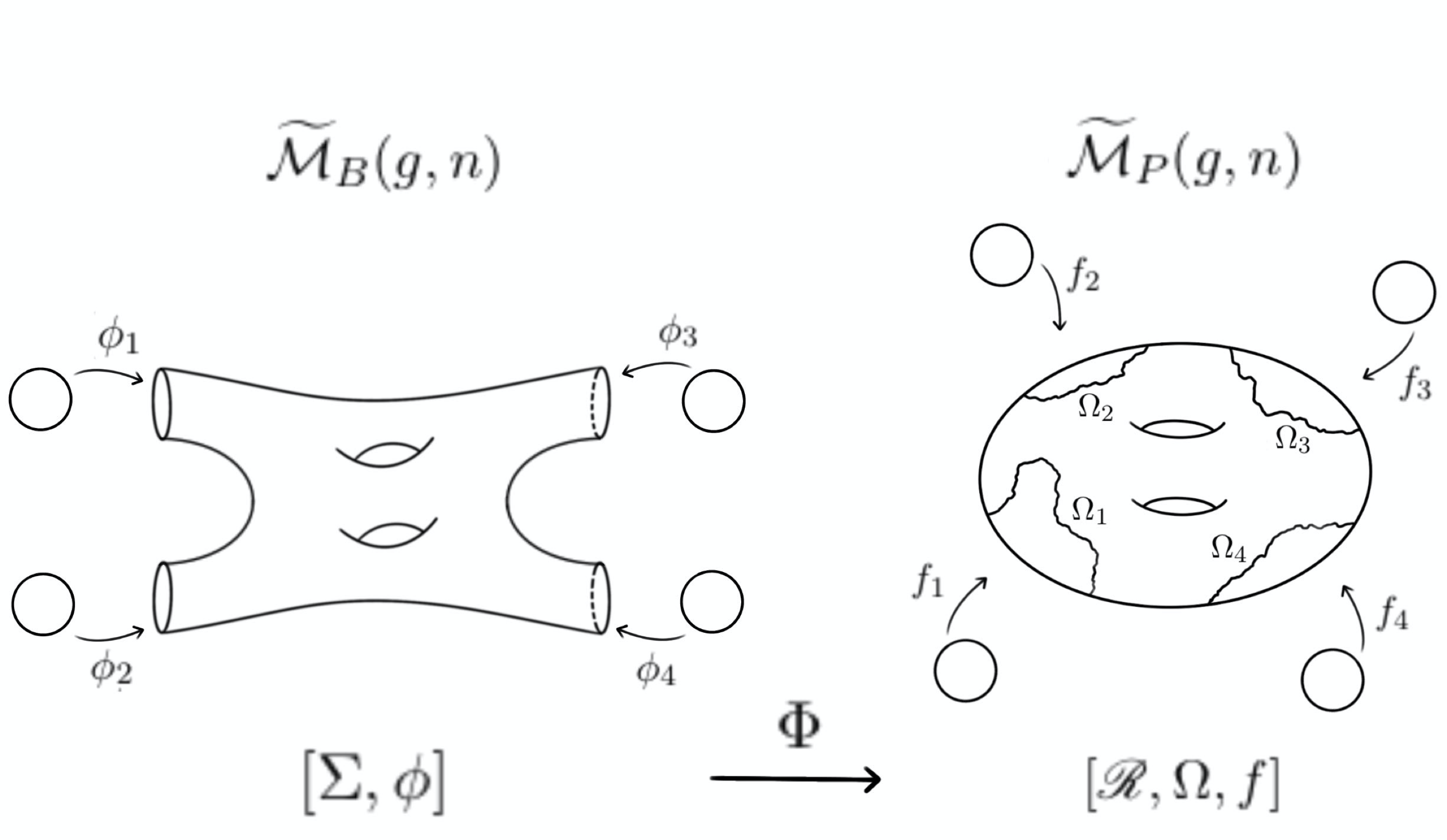}

\end{figure}
 
 \begin{remark}
     It may initially seem strange not to picture $\Omega_k$ as a disk; to understand this, we can look at the process in the Riemann sphere. Let $\riem= \disk_-=\{z \,: \, |z|>1 \} \cup \{\infty \}$. Given a quasisymmetry $\phi:\mathbb{S}^1 \rightarrow \mathbb{S}^1$, we sew on $\disk$ using the parametrization $\phi$. The resulting Riemann surface $\disk \# \disk_-$ is biholomorphic to the Riemann sphere $\sphere$; after uniformizing via a biholomorphism $g: \disk \# \disk_- \rightarrow \sphere$, the resulting map $\left. g \right|_{\disk}: \disk \rightarrow \sphere$ is a biholomorphism onto a quasidisk $\Omega$. That is, in the uniformized picture, the disk becomes a domain $\Omega$, whose boundary might be quite irregular.
     
     If one views $\disk \# \disk_-$ abstractly, then $g$ is the identity on the seam when the seam is viewed as $\partial \disk$, but is equal to $\phi$ when the seam is viewed as $\partial \disk_-$.  
     This is the well-known process of conformal welding \cite{Lehto_book,Nag_book}. Here we extend this to Riemann surfaces as in \cite{RadnellSchippers_monster}. 
     
     In our abstract formulation, the uniformizing map $g$ does not appear explicitly, but is implicit in the statement ``there exists a unique complex structure compatible with ...''.

     In the case that $\phi_k$ is a diffeomorphism, the seam is a smooth curve. 
 \end{remark} 

\end{subsection} 
\begin{subsection}{Modular groups}

 In the following, $R$ is a Riemann surface with $n$ borders homeomorphic to $\mathbb{S}^1$, $g$ handles, and possibly a finite number of punctures.   
 Given a Riemann surface $R$, let $\mathrm{PQC}(R)$ denote the set of quasiconformal self-maps of $R$ which preserve each puncture and each border set-wise.  Note that $\mathrm{PQC}(R)$ is a group under composition. For convenience as above, we give the borders an ordering $\partial_1 R,\ldots,\partial_n R$. Let $\mathrm{PQC}_I(R)$ denote the subset of those self-maps which are homotopic to the identity via a homotopy which is the identity on the boundary. This is a normal subgroup of $\mathrm{PQC}(R)$. We say that $\rho_1,\rho_2 \in \mathrm{PQC}(R)$ are equivalent if $\rho_1 \circ \rho_2^{-1} \in \mathrm{PQC}_I(R)$. 
 The pure modular group of a Riemann surface $R$ is defined to be 
 \[  \mathrm{PMod}(R) = \mathrm{PQC}(R)/\mathrm{PQC}_I(R).  \]
 Since we will only consider pure modular groups, we will drop the adjective "pure". The modular group acts on the Teichm\"uller space via 
 \begin{equation}\label{defn:rho action}
      [\rho] [\riem,F,{\riem}_1] = [\riem,F \circ \rho^{-1},{\riem}_1].
 \end{equation}   
 The quotient of Teichm\"uller space by the action of the {\it full} modular group (that is, not just pure) is the Riemann moduli space.     

 Now we let $\riem$ be a Riemann surface with $g$ handles and $n$ borders homeomorphic to $\mathbb{S}^1$, with no punctures. 
 We consider a number of subgroups of $\mathrm{PMod}(\riem)$:
 \begin{itemize}
  \item $\mathrm{PModI}(\riem)$ is the subgroup generated by quasiconformal self-maps which are the identity on every border; 
  \item $\mathrm{PModI}_k(\riem)$ is the subgroup generated by quasiconformal self-maps which are the identity on the borders $\partial_1 \riem,\ldots,\partial_k \riem$; 
  \item $\mathrm{DI}(\riem)$ is the subgroup generated by Dehn twists around the handles;
  \item $\mathrm{DB}(\riem)$ is the subgroup generated by Dehn twists around the boundary curves;
  \item $\mathrm{DB}(\partial_k \riem)$ is the subgroup generated by Dehn twists around the specific boundary curve $\partial_k \riem$;
 \end{itemize}

We then have the chain of inclusions 
\[  \mathrm{PModI}_n(\riem) \subset \mathrm{PModI}_{n-1}(\riem) \subset \cdots \subset \mathrm{PModI}_1(\riem) \subset \mathrm{PModI}_0(\riem)   \]
 where the end cases $k=0,n$ are given by 
 \[  \mathrm{PModI}_0(\riem) = \mathrm{PMod}(\riem) \]
 and 
 \[   \mathrm{PModI}_n(\riem)=\mathrm{PModI}(\riem). \]

 We get an identification of $\mathrm{QS}(\mathbb{S}^1)$ with a subgroup of $\mathrm{PMod}(\riem)$ via the following prescription.  Fix an $n$-tuple  $\tau = (\tau_1,\ldots,\tau_n)$ of orientation-reversing quasisymmetries $\tau_k:\mathbb{S}^1 \rightarrow \partial_k \riem$. 
 Fix a particular $k$ and let $U_k$ be collars of $\partial_k \riem, \ldots, \partial_n \riem$ bordered by analytic curves. For each element $\psi \in \mathrm{QS}(\mathbb{S}^1)$ we get an induced quasisymmetry of $\partial_k \riem$ via 
 \begin{equation} \label{eq:qs_modular_identification}
   \psi \mapsto \tau_k \circ \psi \circ \tau_k^{-1}.  
 \end{equation}

 Let 
 \[  \mathrm{QS}(\partial_k \riem)    
 = \{ \tau_k \circ \psi \circ \tau_k^{-1} \,:\, \psi \in \mathrm{QS}(\mathbb{S}^1)  \}. \]
 We can identify this with the subgroup of $\mathrm{PMod}(\riem)$ consisting of $[\rho] \in \mathrm{PMod}(\riem)$ such that $\rho$ is the identity on all boundaries except $\partial_k \riem$, and such that $\rho$ is homotopic to the identity (where the homotopy need not be the identity on the boundary).  We will use the same notation for this subgroup, since it will only be used heuristically in this paper.  Intuitively one can see that 
  \[   \mathrm{PModI_{k}}(\riem)/ \mathrm{PModI_{k-1}}(\riem) \cong   \mathrm{QS}(\partial_k \riem)  \]
 and in fact this can be shown using extension theorems for quasisymmetries. 
 
\end{subsection}

\begin{subsection}{Relation between the Teichm\"uller space and the rigged moduli space}
 The rigged moduli space of Riemann surfaces of genus $g$ with $n$ borders homeomorphic to $\mathbb{S}^1$ is the quotient of the Teichm\"uller space of surfaces of the same type by the modular group $\mathrm{PModI}(\riem)$ in a natural way \cite{RadnellSchippers_monster}. We outline this now.
 
 Fix a bordered Riemann surface $\riembase$ with $g$ handles and $n$ borders homeomorphic to $\mathbb{S}^1$, which will play the role of a base surface in the Teichm\"uller space. We also fix orientation-reversing quasisymmetric parametrizations 
 $\tau_k:\mathbb{S}^1 \rightarrow \partial_k \riembase$, $k=1,\ldots,n$. Denote the $n$-tuple by $\tau$, which we call a ``base'' rigging. The covering of the rigged moduli space by the Teichm\"uller space is given by the map
 \begin{align*}
  \mathcal{F}_{\tau}:\mathscr{T}(\riembase) & \rightarrow \widetilde{\mathcal{M}}_B(g,n) \\
  [\riembase,F,{\riem}] & \rightarrow [{\riem},F \circ \tau]
 \end{align*}
 (see Figure \ref{fig:Teich_to_rigged}). 
 \begin{figure}[h] 
  \caption{covering of rigged moduli space by $\mathscr{T}(\riem')$} \label{fig:Teich_to_rigged} 
\includegraphics[width=12cm]{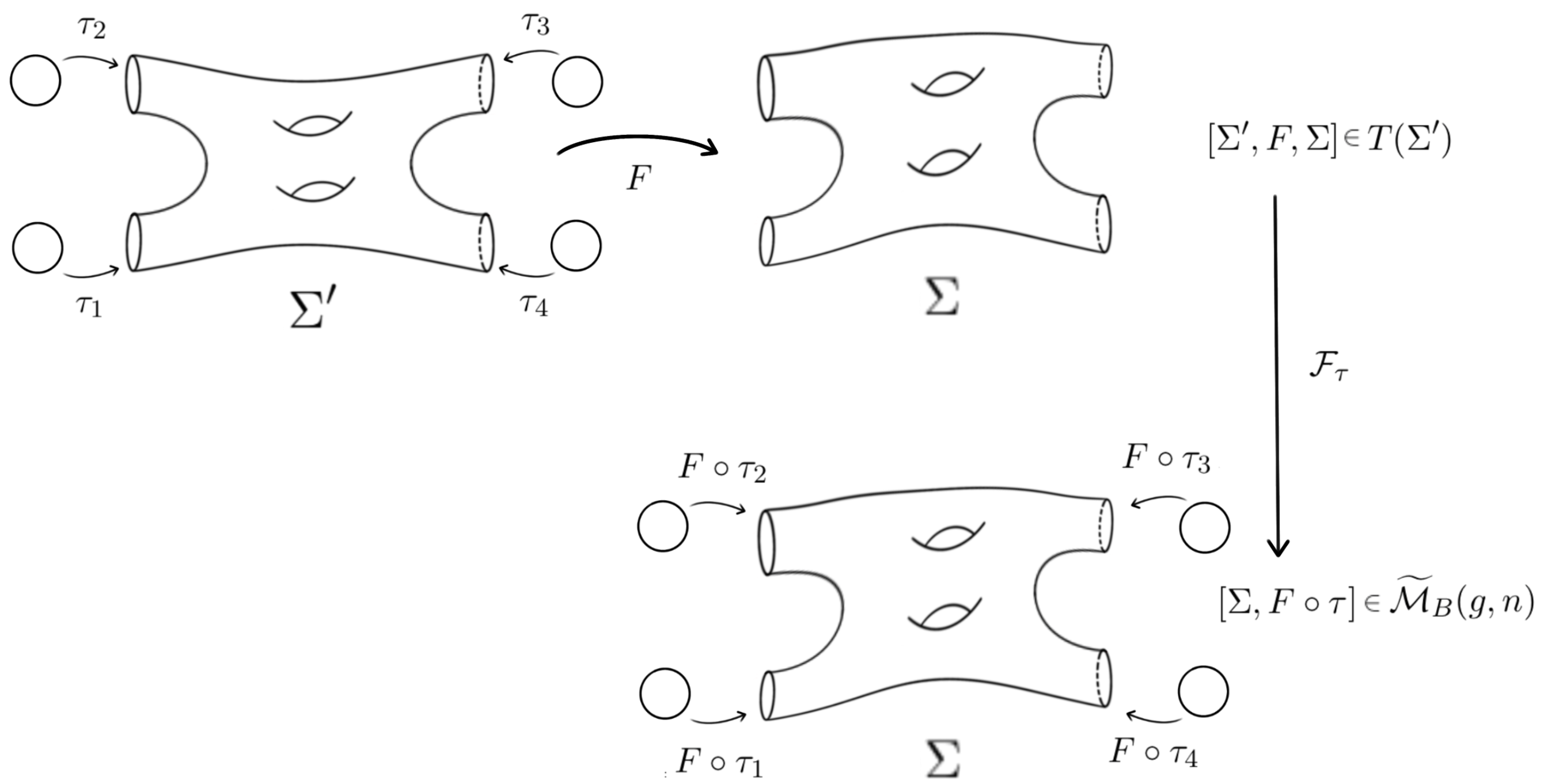}
\end{figure} 
 
 Observe that the composition $F \circ \tau$ is defined only on $\mathbb{S}^1$.

 \begin{remark}
 A change in $\tau$ is associated to a change of base point in Teichm\"uller space, but we will not pursue this point. 
 \end{remark}

 We have the following result \cite[Corollary 5.2 and 5.3]{RadnellSchippers_monster}.
 \begin{theorem} \label{th:F_tau_defined_up_to_PModI}
  $\mathcal{F}_\tau(p)= \mathcal{F}_\tau(q)$ if and only if $p= [\rho]q$ for some $[\rho] \in \mathrm{PModI}(\riembase)$, where the action of $[\rho]$ is given by \eqref{defn:rho action}. Thus
  \[   \mathscr{T}(\riembase)/\mathrm{PModI}(\riembase)  \cong  \widetilde{\mathcal{M}}_B(g,n).    \]
 \end{theorem}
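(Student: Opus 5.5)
The plan is to unwind the definitions directly and prove the two directions of the equivalence separately. The quotient statement then follows, once we note that $\mathcal{F}_\tau$ is surjective.

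\textbf{The easy direction.} Suppose $p=[\riembase,F,\riem]$ and $q=[\rho]p=[\riembase,F\circ\rho^{-1},\riem]$, where $\rho\in\mathrm{PQC}(\riembase)$ is the identity on every border. Since $\rho^{-1}\circ\tau_k=\tau_k$ on $\mathbb{S}^1$, we get $F\circ\rho^{-1}\circ\tau = F\circ\tau$, so $\mathcal{F}_\tau(q)=\mathcal{F}_\tau(p)$. For a general element of the subgroup generated by such maps, apply this to each generator in turn. We must also check that $\mathcal{F}_\tau$ is well defined. Suppose $[\riembase,F_1,\riem_1]=[\riembase,F_2,\riem_2]$ via a conformal $\sigma$ with $F_2^{-1}\circ\sigma\circ F_1$ homotopic to the identity rel boundary. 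Then $F_2^{-1}\circ\sigma\circ F_1$ is the identity on $\partial\riembase$, so $\sigma\circ F_1\circ\tau=F_2\circ\tau$. This is exactly the equivalence in $\widetilde{\mathcal{M}}_B(g,n)$. The same computation shows that $\mathcal{F}_\tau$ is constant on $\mathrm{PModI}(\riembase)$-orbits.

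\textbf{The converse.} Suppose $[\riem_1,F_1\circ\tau]=[\riem_2,F_2\circ\tau]$. Then there is a biholomorphism $\sigma:\riem_1\to\riem_2$ with $\sigma\circ F_1\circ\tau=F_2\circ\tau$. Set $\rho=F_2^{-1}\circ\sigma\circ F_1$. This is a quasiconformal self-map of $\riembase$; it preserves each border, since $\sigma$ respects the ordering encoded in the riggings, and it is the identity on every border, since $\tau_k$ is surjective onto $\partial_k\riembase$. Hence $[\rho]\in\mathrm{PModI}(\riembase)$. Moreover $[\riembase,F_1,\riem_1]=[\riembase,F_2\circ\rho,\riem_1]$, because $\sigma^{-1}$ composed with $F_2\circ\rho$ equals $F_1$ exactly, so the Teichm\"uller equivalence is realized by $\sigma$ itself. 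Therefore $p=[\rho^{-1}]q$ in the notation of \eqref{defn:rho action}, which is what we need.

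\textbf{Surjectivity.} This is the step I expect to be the main obstacle. Given $[\riem,\phi]$, we need a quasiconformal $F:\riembase\to\riem$ with $F\circ\tau=\phi$. The map $\phi\circ\tau^{-1}:\partial\riembase\to\partial\riem$ is an orientation-preserving quasisymmetric map on each border, since both $\phi_k$ and $\tau_k$ are orientation-reversing. The task is to extend it to a quasiconformal map of the surfaces. My approach has three steps.
\begin{enumerate}
\item Choose any quasiconformal $G:\riembase\to\riem$ of the right topological type preserving the border ordering; such a map exists because both surfaces are of type $(g,n)$.
\item Correct $G$ on each border by $h_k=G^{-1}\circ\phi_k\circ\tau_k^{-1}$, a quasisymmetry of $\partial_k\riembase$.
\item Transfer $h_k$ to $\mathbb{S}^1$ via a collar chart, extend it to the annulus $\mathbb{A}_{r,1}$ by a quasiconformal map equal to the identity on $|z|=r$, and glue.
\end{enumerate}
The extension in step 3 can be obtained from the Beurling--Ahlfors extension, interpolated to the identity through an isotopy of the circle. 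The resulting map is quasiconformal by removability of analytic arcs. With $F=G\circ H$, where $H$ is the glued correction, we have $F\circ\tau=\phi$. This yields the bijection $\mathscr{T}(\riembase)/\mathrm{PModI}(\riembase)\cong\widetilde{\mathcal{M}}_B(g,n)$, as in \cite[Corollary 5.2 and 5.3]{RadnellSchippers_monster}.
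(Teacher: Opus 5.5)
Your argument is correct. The paper gives no proof of its own here; it imports the statement from \cite[Corollary 5.2 and 5.3]{RadnellSchippers_monster}. Your proposal is the natural self-contained proof of that result. The fibres of $\mathcal{F}_\tau$ are exactly the $\mathrm{PModI}(\riembase)$-orbits, which you get by unwinding the Teichm\"uller and rigging equivalences: $\rho=F_2^{-1}\circ\sigma\circ F_1$ is forced to be the identity on $\partial\riembase$ because each $\tau_k$ maps onto $\partial_k\riembase$. The bijection with $\widetilde{\mathcal{M}}_B(g,n)$ then needs only surjectivity, that is, a quasiconformal $F$ with $F\circ\tau=\phi$.

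Two points need tightening.

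First, in the converse the target surface of $F_2\circ\rho$ is $\Sigma_2$, not $\Sigma_1$. The correct identity is $[\riembase,F_1,\Sigma_1]=[\riembase,F_2\circ\rho,\Sigma_2]=[\rho^{-1}]q$, and $\sigma$ realizes it because $(F_2\circ\rho)^{-1}\circ\sigma\circ F_1=\mathrm{id}$.

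Second, step 3 of surjectivity is stated too loosely.
\begin{itemize}
\item Sweeping an isotopy of circle homeomorphisms radially across the collar does not give a quasiconformal map. For example, take the straight-line isotopy $g_t$ of lifts between $h_k$ and the identity. The map $(\theta,t)\mapsto(g_t(\theta),t)$ is in general not ACL, because a quasisymmetry can be singular with respect to arc length.
\item The Beurling--Ahlfors extension does not carry interior circles onto circles. So making the map the identity on $|z|=r$ requires an additional step.
\end{itemize}
A clean construction runs as follows. Take the Douady--Earle extension of $h_k$, which is a real-analytic quasiconformal diffeomorphism of $\disk$. Compose it with a compactly supported diffeomorphism (by the disk theorem) so that it becomes the identity on a small disk $|z|\le\epsilon$. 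Then conjugate by the radial stretch $z\mapsto z|z|^{a-1}$ with $\epsilon^{a}=r$. This stretch fixes $\mathbb{S}^1$ pointwise, so the conjugated map equals $h_k$ on $|z|=1$ and the identity on $|z|\le r$. Alternatively, quote this collar-extension lemma as standard.

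With these adjustments, together with the removability of analytic arcs that you already invoke for the gluing, your proof is complete.
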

 
 \begin{remark}
 This fact implies that the rigged moduli space has a complex Banach manifold structure. This does not play a role in the paper, though it is worth noting that all the group actions are biholomorphisms \cite{RadnellSchippers_monster}.
 \end{remark}

 The full modular group $\mathrm{PMod}(\riembase)$ induces an action by quasisymmetries on the boundary.  However 
 the identification of $\mathrm{QS}(\mathbb{S}^1)$ with the action on the boundary depends on the choice of base boundary parametrization $\tau$; in many ways, this is analogous to a choice of chart in a manifold. 
\end{subsection}
\begin{subsection}{Notation for Teichm\"uller curve and capped surface}  In this section we outline some notation which will simplify the presentation.

 Assume that the base surface $\riembase$ is a surface of genus $g$ with $n$ borders homeomorphic to $\mathbb{S}^1$. Fix a base parametrization $\tau$, and sew on caps to obtain $\mathscr{R}$ and $\Omega$ as in Section \ref{rigged moduli space}.  

 Let $\mathcal{T}(\riembase)$ denote the universal Teichm\"uller curve with the associated fibration 
 \[  \mathfrak{B}:\mathcal{T}(\riembase)  \rightarrow \mathscr{T}(\riembase) . \] 
 
 Fix $q \in \mathscr{T}(\riembase)$, let $\riem^q = \mathfrak{B}^{-1}(q)$, and let $(\riembase,F^q,{\riem}^q)$ be a representative of $q$. Sewing on caps via $F^q \circ \tau$ we obtain a Riemann surface $\mathscr{R}^q$ with caps $\Omega^q$ and corresponding riggings $f^q$.
 
 Summarizing the notation, we have 
 \begin{align} \label{eq:fibres_description_teich_curve}
  q & = [\riembase,F^q,{\riem}^q] \in T(\riem'); \\ 
  \mathfrak{B}^{-1}(q)  & = {\riem}^q;  \\
  {\Phi} \circ  \mathcal{F}_\tau(q) & = (\mathscr{R}^q,\Omega^q,f^q) \in \widetilde{\mathcal{M}}_P(g,n).  
 \end{align}

  Note that although $F^q$ is not uniquely specified, the following are uniquely determined by $q$ by the Teichm\"uller equivalence relation (Definition \ref{de:Teich_space_and_eq_rel}): 
 \begin{itemize}
     \item the boundary values of $F^q$ on $\partial \riembase$;
     \item the homotopy class rel boundary of $F^q$ as a map from $\riembase$ to ${\riem}^q$;
     \item $\mathscr{R}^q$, $\Omega^q$, and $f^q = F^q \circ \tau$. 
 \end{itemize}
 Thus we may write a specific representative of $[\rho] q$ as 
 $(\riembase,F^{[\rho]q},{\riem}^{[\rho] q})$, where the ambiguity is confined to $F^{[\rho] q}$. This ambiguity can be safely ignored because its boundary values and homotopy class are nevertheless uniquely determined.\\
 
 The following fact, which follows from the Teichm\"uller equivalence relation together with the definition of sewing, will be of use later.  
 \begin{proposition} \label{pr:modular_action_Teich_curve} Fix $k \in \{1,\ldots,n\}$. 
  Let $[\rho] \in \mathrm{PModI}_k(\riembase)$. For fixed $q = [\riembase,F^q,{\riem}^q]$ as above,
  and any representative $\rho$ of $[\rho]$, we have that 
  \begin{enumerate}\normalfont
      \item $\rho$ extends by the identity to the caps $\Omega_1,\ldots,\Omega_k$ of $\mathscr{R}$, and this extension is quasiconformal;
      \item there is a unique biholomorphism $\sigma_{q,[\rho]}:\riem^q  \rightarrow {\riem}^{[\rho] q}$ such that $(F^{[\rho]q})^{-1} \circ \sigma_{q,[\rho]} \circ F^q \circ \rho^{-1}$  is homotopic to the identity rel boundary of $\riembase$; 
      \item $\sigma_{q,[\rho]}$ extends holomorphically to a map
       $\sigma_{q,[\rho]}: {\riem}^q_k \rightarrow {\riem}^{[\rho]q}_k$;
      \item $f_m^{[\rho] q} = \sigma_{q,[\rho]} \circ f_m^q$ for $m=1,\ldots,k$. 
  \end{enumerate}
 \end{proposition}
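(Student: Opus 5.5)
We prove the four items in order, working throughout with the capped surface $\mathscr{R}'$ obtained from $\riembase$ by sewing $\disk$ onto each border via $\tau$. Its caps are $\Omega'_m$ with rigging maps $f'_m$, and $\tau_m$ is the boundary value of $f'_m$.

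For (1), fix a representative $\rho$. By definition of $\mathrm{PModI}_k(\riembase)$, $\rho$ is the identity on $\partial_1\riembase,\ldots,\partial_k\riembase$. Extend $\rho$ by the identity on $\Omega'_1,\ldots,\Omega'_k$; continuity across the seams follows because the two definitions agree there. For quasiconformality, note that each seam is a quasicircle, and quasicircles are quasiconformally removable. A homeomorphism which is quasiconformal off a quasicircle is therefore quasiconformal everywhere, with dilatation bounded by the maximum of the two dilatations. Concretely, one can pass to a chart $f'_m$ extended quasiconformally past $\partial\disk$ and use removability of $\mathbb{S}^1$ for ACL homeomorphisms.

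For (2), consider the quasiconformal map $F^q\circ\rho^{-1}:\riembase\to\riem^q$. By \eqref{defn:rho action}, the point $[\rho]q$ is represented by $(\riembase,F^q\circ\rho^{-1},\riem^q)$. It is also represented by $(\riembase,F^{[\rho]q},\riem^{[\rho]q})$. Definition \ref{de:Teich_space_and_eq_rel} then gives a biholomorphism $\sigma=\sigma_{q,[\rho]}:\riem^q\to\riem^{[\rho]q}$ with $(F^{[\rho]q})^{-1}\circ\sigma\circ F^q\circ\rho^{-1}$ homotopic to the identity rel boundary.

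Uniqueness in (2) needs a separate argument. If $\sigma_1,\sigma_2$ both work, then $\sigma_2^{-1}\circ\sigma_1$ is a conformal self-map of $\riem^q$ homotopic to the identity rel boundary. In particular it is the identity on the borders. Reflecting across the analytic borders to the double, it becomes a conformal map agreeing with the identity on a curve, so it is the identity by the identity theorem. One could alternatively use rigidity of conformal automorphisms homotopic to the identity rel boundary.

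For (3) and (4), the key observation is that homotopy rel boundary fixes the boundary values, which gives
\[ (F^{[\rho]q})^{-1}\circ\sigma\circ F^q\circ\rho^{-1}=\mathrm{id} \ \text{on } \partial\riembase. \]
Since $\rho^{-1}$ is the identity on $\partial_m\riembase$ for $m\le k$, this yields $\sigma\circ F^q\circ\tau_m=F^{[\rho]q}\circ\tau_m$ on $\mathbb{S}^1$, that is, $\sigma\circ f^q_m=f^{[\rho]q}_m$ on $\mathbb{S}^1$ for $m\le k$.

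Now define the extension of $\sigma$ on $\Omega^q_m$ by
\[ \sigma := f^{[\rho]q}_m\circ (f^q_m)^{-1}, \]
which is a biholomorphism $\Omega^q_m\to\Omega^{[\rho]q}_m$. By the previous display it matches $\sigma$ continuously along the seam $\partial\Omega^q_m$. The result is a homeomorphism $\riem^q_k\to\riem^{[\rho]q}_k$ that is holomorphic off the quasicircles $\partial\Omega^q_m$. Removability of quasicircles for continuous maps holomorphic off them (via Morera after quasiconformal flattening, or the conformal-welding uniqueness of the complex structure in \cite[Theorems 3.2, 3.3]{RadnellSchippers_monster}) shows it is holomorphic. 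This gives (3), and (4) holds by construction.

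The main obstacle is this removability step, in both (1) and (3), since the seams are only quasicircles. I would handle it by citing the uniqueness of the sewn complex structure in \cite{RadnellSchippers_monster}: a homeomorphism between sewn surfaces that is holomorphic on each piece is holomorphic.
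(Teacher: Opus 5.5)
Your proof is correct and follows the same route as the paper. The biholomorphism $\sigma_{q,[\rho]}$ comes from the Teichm\"uller equivalence $(\riembase,F^{[\rho]q},{\riem}^{[\rho]q})\sim(\riembase,F^q\circ\rho^{-1},{\riem}^q)$, uniqueness comes from two such maps agreeing on the boundary, and (3)--(4) come from the boundary identity $\sigma_{q,[\rho]}\circ F^q\circ\tau_m=F^{[\rho]q}\circ\tau_m$ for $m\le k$.

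Your explicit extension $f^{[\rho]q}_m\circ(f^q_m)^{-1}$, combined with conformal removability of quasicircles for homeomorphisms (or the uniqueness of the sewn complex structure in \cite{RadnellSchippers_monster}), is exactly what makes the paper's terse ``both are holomorphic with the same boundary values'' step rigorous, since $\sigma_{q,[\rho]}\circ f^q_m$ is a priori defined only on $\mathbb{S}^1$. You should drop the ``Morera after quasiconformal flattening'' aside, because flattening destroys holomorphy on at least one side. Argue instead that the glued homeomorphism is quasiconformal with dilatation vanishing off a quasicircle, which has measure zero, and is therefore conformal.
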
  
 \begin{proof}  
  (1) follows from the fact that $\rho$ is the identity on each boundary curve $\partial_m \riem$ for $m=1,\ldots,k$.  Aside from uniqueness, (2) is a restatement of the Teichm\"uller equivalence relation $(\riem',F^{[\rho]q},\Sigma^{[\rho]q}) \sim (\riem',F^q \circ \rho^{-1}, \Sigma^q)$. Uniqueness follows from the fact that any pair of such biholomorphisms $\sigma$ would have to be homotopic to the identity rel boundary, and in particular would be equal on the boundary. Thus they would be equal. 

  (3) follows from (4). To see that (4) holds, observe that $f_m^q = F^q \circ \tau_m$ and $f_m^{[\rho]q} = F^{[\rho]q} \circ \tau_m$. Thus by (2), $f_m^{[\rho] q}$ and $\sigma_{q,[\rho]} \circ f_m^q$ have the same boundary values for $m=1,\ldots,k$. Since they are both holomorphic, they must be equal. 
 \end{proof}  
 
 \begin{remark}[The reason behind the notation $\sigma_{q,[\rho]}$ as opposed to $\sigma_{q_1,q_2}$ or $\sigma_{q,[\rho]q}$]
   The uniqueness of $\sigma_{q,[\rho]}$ follows from the fact that $F^{[\rho] q} \circ \sigma_{q,[\rho]} \circ F^q \circ \rho^{-1}$ equals the identity on $\partial \riembase$ and hence  
   \[  \left. \sigma_{q,[\rho]} \right|_{\partial{\riem}^q} = \left. F^{[\rho]q}  \circ \rho  \circ (F^q)^{-1}     \right|_{\partial{\riem}^q}.   \]
   Since $\sigma_{q,[\rho]}$ is holomorphic it is uniquely determined by these boundary values. 
   
   On the other hand, $\sigma_{q,[\rho]}$ does depend on $[\rho]$ and not just on $q$ and $[\rho]q$, because it might turn out that there are $[\rho_1],[\rho_2]$ such that $[\rho_1] q = [\rho_2] q$ even though $[\rho_1]\neq [\rho_2]$. In other words, the modular group actions may have fixed points on Teichm\"uller space. This is the reason that we do not---indeed cannot---use the notation $\sigma_{q_1,q_2}$ or $\sigma_{q,[\rho]q}$. 
   
 \end{remark}
\end{subsection}
\end{section}
\begin{section}{Modular invariants and conformal invariants}

\begin{subsection}{Generalized modular invariants on Teichm\"uller space are relative conformal invariants}

 \label{se:modular_invariants_are_conformal_invariants}

 Let ${\riem}_k$ be a genus $g$ bordered Riemann surface with $n-k$ borders, containing $k$ quasidisks $\Omega_1,\ldots,\Omega_k$ whose closures do not intersect. Assume that the quasidisks contain punctures $p_1,\ldots,p_k$. It is assumed as usual that the caps $\Omega_1,\ldots,\Omega_k$ and borders $\partial_{k+1} {\riem}_k,\ldots, \partial_{n} {\riem}_k$ are ordered, and $f_l$, $l=1,\ldots,k$ are riggings mapping $\disk$ into $\Omega_l$ respectively, taking $0$ to $p_l$. For simplicity, we are using the notation of previous sections, even though we have not sewn on caps to the remaining borders. 
 
 We define the configuration space
 \[   \hat{\mathcal{C}}(k,n,g) = \{ ({\riem}_k,\Omega_1,\ldots,\Omega_k,f_1,\ldots,f_k) \}.  \] 

 We can summarize the relation between the many moduli spaces with the following diagram.

\begin{tikzcd}[column sep = large]
 \mathscr{T}(\riem)  \arrow{dd}{/\mathrm{PModI}_n} \arrow[bend left=20]{ddr}{/\mathrm{PModI}_{n-1}} \arrow[bend left=20]{ddrrr}{/\mathrm{PModI}_{0}}   & & &   \\ & & & 
 \\ \begin{array}{l} \mathcal{C}(0,n,g) \\ = \widetilde{\mathcal{M}}_P(g,n) \end{array}  \arrow{r}{/\mathrm{QS}(\partial_n \riem)} &   \mathcal{C}(1,n,g) \arrow{r}{/\mathrm{QS}(\partial_{n-1} \riem)} &  \cdots  \arrow{r}{/\mathrm{QS}(\partial_1\riem)} &   \begin{array}{l} \mathcal{C}(n,n,g)  \\ = \mathcal{M}(g,n) \end{array}
\end{tikzcd}
\medskip

 We say that a quantity $\mathfrak{m}:\hat{\mathcal{C}}(k,n,g) \rightarrow \mathbb{R}$ is conformally invariant if for any biholomorphism $G:{\riem}_k \rightarrow {\riem}_k'$ preserving the ordering of the punctures and borders, 
 \[   \mathfrak{m}(G({\riem}_k),G(\Omega_1),\ldots,G(\Omega_k),G \circ f_1,\ldots, G \circ f_k) =   \mathfrak{m}({\riem}_k,\Omega_1,\ldots,\Omega_k,f_1,\ldots,f_k).     \]
 
 Alternatively, one can think of conformal invariants as functions on the space of equivalence classes 
 \[ \mathcal{C}(k,n,g) = \hat{\mathcal{C}}(k,n,g)/\sim \] 
 where we say 
 \[  ({\riem}_k,\Omega_1,\ldots,\Omega_k,f_1,\ldots,f_k) \sim (G({\riem}_k),G(\Omega_1),\ldots,G(\Omega_k),G \circ f_1,\ldots,G\circ f_k)  \]
 for any biholomorphism $G:{\riem}_k \rightarrow G({\riem}_k)$. Observe that we have $\mathcal{C}(0,n,g)=\widetilde{\mathcal{M}}_P(g,n)$. 

\begin{definition}
    We call the boundaries $\partial_{k+1} \riem,\ldots,\partial_{n} \riem$ {\it free boundaries}.
\end{definition}
The invariants on $\mathcal{C}(k,n,g)$ detect how the boundaries $\partial_1 \riem, \ldots, \partial_k \riem$ are sewn to disks, where as it does not detect the remaining boundaries $\partial_{k+1} \riem, \ldots, \partial_n \riem$.  

 Needless to say variations on this theme are easily obtained. One can for example allow more punctures in the caps, or allow more general surfaces in place of the caps.

 \begin{definition} 
  Let $M:\mathscr{T}(\riem) \rightarrow \mathbb{R}$ be a function on Teichm\"uller space. We say that $M$ is a modular invariant of type $k$ for fixed $k\in \{1,\ldots,n\}$ if  $M([\rho]q) = M(q)$ for all $[\rho] \in \mathrm{PModI}_k(\riem)$ and $q\in \mathscr{T}(\riem).$ 
 \end{definition}

 There is a natural association between functions on ${\mathcal{C}}(k,n,g)$ and $k$-modular invariants obtained by removing the $n-k$ caps. Define the following map: 
 \begin{align*}
   \mathcal{G}_k: \widetilde{\mathcal{M}}_P(g,n) & \rightarrow \mathcal{C}(k,n,g) \\
   (\mathscr{R},\Omega,f) & \mapsto ({\riem}_k,\Omega_1,\ldots,\Omega_k,f_1,\ldots,f_k);
 \end{align*}
 that is, $\mathcal{G}_k$ removes the caps $\Omega_{k+1},\ldots,\Omega_n$ and forgets the mappings $f_{k+1},\ldots,f_n$. The relation between the spaces is given in the following diagram. 
 
\begin{tikzcd}[column sep = large]
 \mathscr{T}(\riem)  \arrow{d}{\Phi \circ \mathcal{F}_\tau} & & &
 \\  \mathcal{C}(0,n,g)  = \widetilde{\mathcal{M}}_P(g,n)   
 \arrow{d}{\mathcal{G}_1} \arrow{dr}{\mathcal{G}_2} \arrow{drrr}{\mathcal{G}_n} && &\\
 \mathcal{C}(1,n,g) \arrow{r}& \mathcal{C}(2,n,g)  \arrow{r}& \cdots  \arrow{r} & \mathcal{C}(n,n,g)  = \mathcal{M}(g,n)  
\end{tikzcd}
\medskip

\begin{theorem}[conformal invariance is modular invariance]  \label{th:conf_inv_and_mod_inv}
 Fix a base surface $\riem'$ of genus $g$ with $n$ ordered boundaries and quasisymmetric parametrizations $\tau_1,\ldots,\tau_n$. 
 
 Given a conformal invariant $\mathfrak{m}$ on $\hat{\mathcal{C}}(k,n,g)$, the function
 \begin{equation} \label{eq:M_from_m}
   M = \mathfrak{m} \circ \mathcal{G}_k \circ \Phi \circ \mathcal{F}_\tau : T(\riem') \rightarrow \mathbb{R}   
 \end{equation}
 is a $k$-modular invariant on $T(\riem')$. Conversely, given a $k$-modular invariant $M$ on $T(\riem')$, the function 
 \begin{equation} \label{eq:m_from_M}
   \mathfrak{m}=  M \circ \left( \mathcal{G}_k \circ \Phi \circ \mathcal{F}_\tau\right)^{-1}: \mathcal{C}(k,n,g) \rightarrow \mathbb{R}   
 \end{equation}
 is a conformally invariant function on $\hat{\mathcal{C}}(k,n,g)$ which is independent of $\tau_{k+1},\ldots,\tau_{n}$. 
\end{theorem}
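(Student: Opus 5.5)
The plan is to show that the composite map $\Pi_k := \mathcal{G}_k \circ \Phi \circ \mathcal{F}_\tau : \mathscr{T}(\riem') \rightarrow \mathcal{C}(k,n,g)$ is surjective and that its fibres are exactly the $\mathrm{PModI}_k(\riem')$-orbits. Both directions of Theorem \ref{th:conf_inv_and_mod_inv} then follow formally. A conformal invariant $\mathfrak{m}$ is a function on $\mathcal{C}(k,n,g)$, so $\mathfrak{m}\circ \Pi_k$ is constant on orbits. Conversely, a $k$-modular invariant $M$ descends to a well-defined function on $\mathcal{C}(k,n,g)$, which is the meaning of $M \circ \Pi_k^{-1}$.

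\textbf{Step 1: $\Pi_k$ is constant on orbits.} Let $[\rho] \in \mathrm{PModI}_k(\riem')$ and $q \in \mathscr{T}(\riem')$. Proposition \ref{pr:modular_action_Teich_curve} gives a biholomorphism $\sigma_{q,[\rho]} : \riem^q_k \rightarrow \riem^{[\rho]q}_k$ with $f^{[\rho]q}_m = \sigma_{q,[\rho]}\circ f^q_m$ for $m=1,\ldots,k$. Hence $\sigma_{q,[\rho]}(\Omega^q_m)=\Omega^{[\rho]q}_m$, and $\sigma_{q,[\rho]}$ preserves the ordering of the remaining borders, since it is the boundary value of a map homotopic to $F^{[\rho]q}\circ\rho\circ (F^q)^{-1}$ and $\rho$ preserves each border. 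So $\Pi_k([\rho]q)=\Pi_k(q)$ in $\mathcal{C}(k,n,g)$. This gives the first assertion.

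\textbf{Step 2: $\Pi_k$ is surjective.} Take a representative $(\riem_k,\Omega_1,\ldots,\Omega_k,f_1,\ldots,f_k)$. Remove $\Omega_1,\ldots,\Omega_k$ to obtain a bordered surface $\riem$ of type $(g,n)$, with riggings $\phi_m = f_m|_{\mathbb{S}^1}$ for $m\le k$. Choose arbitrary quasisymmetric riggings $\phi_m$ for $m>k$.

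By the border model, the inverse of $\Phi$, and surjectivity of $\mathcal{F}_\tau$ (Theorem \ref{th:F_tau_defined_up_to_PModI}), some $q$ satisfies $\Phi\circ\mathcal{F}_\tau(q)=[\mathscr{R},\Omega,f]$ with $f_1,\ldots,f_k$ as given. Concretely, surjectivity of $\mathcal{F}_\tau$ amounts to building a quasiconformal $F:\riem'\to\riem$ with $F\circ\tau=\phi$, which uses quasiconformal extension of quasisymmetries in collars. Applying $\mathcal{G}_k$ returns the given configuration.

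\textbf{Step 3: $\Pi_k$ separates orbits.} This is the main obstacle. Suppose $\Pi_k(p)=\Pi_k(q)$, realized by a biholomorphism $G:\riem^p_k\to\riem^q_k$ with $G\circ f^p_m=f^q_m$ for $m\le k$. The plan is to build $\rho\in\mathrm{PModI}_k(\riem')$ with $[\rho]p=q$.

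First, $G$ maps $\partial_j\riem^p$ to $\partial_j\riem^q$ for $j>k$, and the parametrizations $G\circ f^p_j|_{\mathbb{S}^1}$ and $f^q_j|_{\mathbb{S}^1}$ differ by a quasisymmetry $\psi_j = (f^q_j)^{-1}\circ G\circ f^p_j$ of $\mathbb{S}^1$.

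Next, modify $G$ in a collar of each free border. Using a quasiconformal extension of $\psi_j$ to an annulus, equal to the identity on its inner boundary, we obtain a quasiconformal map $\tilde G:\riem^p\to\riem^q$. This map agrees with $G$ on $\partial_m$ for $m\le k$ and satisfies $\tilde G\circ f^p_j=f^q_j$ on $\mathbb{S}^1$ for all $j$.

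Then $\mathcal{F}_\tau(p)$ and $\mathcal{F}_\tau(q)$ would coincide via the rigging-compatible map $\tilde G$, except that $\tilde G$ is only quasiconformal. Instead, set $\rho = (F^q)^{-1}\circ G'\circ F^p$, where $G'$ is $G$ corrected in the collars as above. Then $\rho$ is the identity on $\partial_1,\ldots,\partial_k$, because $F^q\circ\tau_m = f^q_m = G\circ f^p_m = G\circ F^p\circ\tau_m$ there. Also $F^q\circ\rho\circ(F^p)^{-1} = G'$, which is conformal away from the collars. One checks that $(\riem',F^p\circ\rho^{-1},\riem^p)\sim(\riem',F^q,\riem^q)$ up to the collar correction. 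That correction is itself an element of $\mathrm{PModI}_n$ composed with boundary reparametrizations of $\partial_{k+1},\ldots,\partial_n$, all of which lie in $\mathrm{PModI}_k$. Alternatively, one invokes Theorem \ref{th:F_tau_defined_up_to_PModI} after adjusting $p$ by such an element so that its free riggings agree with those of $q$, which reduces to the case $k=n$. The care needed is in verifying that the collar-supported adjustment defines a genuine element of $\mathrm{PModI}_k(\riem')$ acting as claimed.

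Independence from $\tau_{k+1},\ldots,\tau_n$ follows because $\mathcal{G}_k$ forgets the free caps. Changing these $\tau_j$ changes $\mathcal{F}_\tau(q)$ only in the free riggings, and Steps 1–3 show that $\mathfrak{m}$ is determined on all of $\mathcal{C}(k,n,g)$ independently of that choice.
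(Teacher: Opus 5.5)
Your architecture is the paper's: you prove that $\Pi_k=\mathcal{G}_k\circ\Phi\circ\mathcal{F}_\tau$ is surjective and that its fibres are exactly the $\mathrm{PModI}_k(\Sigma')$-orbits. Step 3 is Lemma \ref{le:lift_of_m}, and Step 2 supplies the surjectivity that the paper uses tacitly. Steps 1 and 2 and the independence argument are fine; indeed $\Pi_k$ visibly involves only $\tau_1,\ldots,\tau_k$.

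The problem is the main construction in Step 3. After your collar correction, $\tilde G\circ f^p_j=f^q_j$ on $\mathbb{S}^1$ for \emph{every} $j$. Hence $\rho=(F^q)^{-1}\circ\tilde G\circ F^p$ satisfies $\rho\circ\tau_j=\tau_j$ for all $j$, i.e.\ $[\rho]\in\mathrm{PModI}_n(\Sigma')$. By Theorem \ref{th:F_tau_defined_up_to_PModI} such an element keeps $p$ in its $\mathcal{F}_\tau$-fibre. So $[\rho]p\neq q$ whenever $p$ and $q$ differ in their free riggings, which is the only case with content. To see this directly: a conformal $\sigma$ witnessing $[\rho]p=q$ would be homotopic to $\tilde G$ rel boundary, hence share its boundary values, forcing $\mathcal{F}_\tau(p)=\mathcal{F}_\tau(q)$. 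So the phrase ``up to the collar correction'' is where the whole argument lives, and it is left unverified.

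The correction itself is the misstep: $\mathrm{PModI}_k$ imposes nothing on the free borders, so nothing needs matching there. Use the uncorrected $G$ instead. Since $G(\Omega^p_m)=\Omega^q_m$ for $m\le k$, it restricts to a biholomorphism $\Sigma^p\to\Sigma^q$ preserving each border. Then $\rho:=(F^q)^{-1}\circ G|_{\Sigma^p}\circ F^p$ is a quasiconformal self-map of $\Sigma'$, and by your own computation it is the identity on $\partial_1,\ldots,\partial_k$. Hence $[\rho]\in\mathrm{PModI}_k(\Sigma')$. Moreover $\sigma=G|_{\Sigma^p}$ witnesses $[\rho]p=q$, because $(F^q)^{-1}\circ G\circ F^p\circ\rho^{-1}=\mathrm{id}$.

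Your fallback is valid: move $p$ by some $[\hat\rho]\in\mathrm{PModI}_k$ so that the free riggings match those of $q$, then apply Theorem \ref{th:F_tau_defined_up_to_PModI} to get $[\bar\rho]\in\mathrm{PModI}_n$. This is essentially the paper's proof of Lemma \ref{le:lift_of_m}, with its $\hat\rho$ and $\bar\rho$. But the composite $\bar\rho\circ\hat\rho$ is, up to homotopy rel boundary, just the map above. So the quasisymmetric collar extensions are an avoidable detour.
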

Note that on the other hand, the correspondence does indeed depend on $\tau_1,\ldots,\tau_k$.

  The fact that $M$ is a $k$-modular invariant follows immediately from the fact that $\mathcal{G}_k$ does not depend on $\Omega_{k+1},\ldots,\Omega_n$ or $f_{k+1},\ldots,f_n$.    
  The converse requires the following lemma. 
   \begin{lemma} \label{le:lift_of_m} Let $\riem'$ be a fixed Riemann surface with $g$ handles and $n$ boundary curves.  If $\mathcal{G}_k \circ \Phi \circ \mathcal{F}_\tau(\riem',F,\riem) = \mathcal{G}_k \circ \Phi \circ \mathcal{F}_\tau(\riem',\tilde{F},\tilde{\riem})$ then there is a $[\rho] \in \mathrm{PModI}_k(\riem')$ such that $[\riem',F,\riem] = [\riem',\tilde{F} \circ \rho^{-1}, \tilde{\riem}]$. 
 \end{lemma}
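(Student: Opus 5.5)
Write $q=[\riem',F,\riem]$ and $\tilde q=[\riem',\tilde F,\tilde\riem]$, and let $(\mathscr{R},\Omega,f)$ and $(\tilde{\mathscr{R}},\tilde\Omega,\tilde f)$ be the capped surfaces obtained by sewing caps via $F\circ\tau$ and $\tilde F\circ\tau$. Here $f_m=F\circ\tau_m$ and $\tilde f_m=\tilde F\circ\tau_m$. The hypothesis says the two images in $\mathcal{C}(k,n,g)$ agree. So there is a biholomorphism $G:\riem_k\to\tilde\riem_k$, preserving the ordering of caps and borders, with $G(\Omega_m)=\tilde\Omega_m$ and $\tilde f_m=G\circ f_m$ for $m=1,\ldots,k$. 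The plan is to build a quasiconformal self-map $\rho$ of $\riem'$ out of $G$ and the markings, check that $\rho$ is the identity on $\partial_1\riem',\ldots,\partial_k\riem'$, and verify the Teichm\"uller equivalence directly from Definition \ref{de:Teich_space_and_eq_rel}.

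\textbf{Step 1: restrict $G$.} Restrict $G$ to $\riem=\riem_k\setminus\mathrm{cl}(\Omega_1\cup\cdots\cup\Omega_k)$. Since $G$ maps $\Omega_m$ onto $\tilde\Omega_m$, it maps $\riem$ biholomorphically onto $\tilde\riem$. It extends homeomorphically to the borders, which are quasicircles, mapping $\partial_m\riem$ to $\partial_m\tilde\riem$ for every $m$. For $m\le k$, the identity $\tilde f_m=G\circ f_m$ on $\mathbb{S}^1$ gives
\[ G\circ F\circ\tau_m=\tilde F\circ\tau_m , \qquad\text{so}\qquad G\circ F=\tilde F \ \text{on } \partial_m\riem' . \]
For the free borders $m>k$ there is no such constraint.

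\textbf{Step 2: define $\rho$ and verify the equivalence.} Set $\rho=\tilde F^{-1}\circ G\circ F:\riem'\to\riem'$. This is a quasiconformal self-map, being a composition of quasiconformal maps and a conformal map. It preserves each border set-wise and, by Step 1, is the identity on $\partial_1\riem',\ldots,\partial_k\riem'$. Hence $\rho\in\mathrm{PQC}(\riem')$ and $[\rho]\in\mathrm{PModI}_k(\riem')$. Now compare $(\riem',F,\riem)$ with $(\riem',\tilde F\circ\rho^{-1},\tilde\riem)$, taking $\sigma=G|_{\riem}:\riem\to\tilde\riem$. The definition requires that $(\tilde F\circ\rho^{-1})^{-1}\circ\sigma\circ F$ be homotopic to the identity rel boundary. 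But
\[ (\tilde F\circ\rho^{-1})^{-1}\circ G\circ F=\rho\circ\tilde F^{-1}\circ G\circ F=\rho^2 . \]
This is not the identity, so the definition of $\rho$ must be inverted to match the convention \eqref{defn:rho action}. Take instead $\rho=F^{-1}\circ G^{-1}\circ\tilde F$, which is still the identity on the first $k$ borders by Step 1. Then $\tilde F\circ\rho^{-1}=G\circ F$, and
\[ (\tilde F\circ\rho^{-1})^{-1}\circ\sigma\circ F=(G\circ F)^{-1}\circ G\circ F=\mathrm{id} , \]
which is trivially homotopic to the identity rel boundary. This gives $[\riem',F,\riem]=[\riem',\tilde F\circ\rho^{-1},\tilde\riem]$, as claimed.

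\textbf{Expected obstacle.} The delicate point is the bookkeeping of which boundary constraints $\rho$ must satisfy. Membership in $\mathrm{PModI}_k$ only requires $\rho$ to be the identity on the first $k$ borders, and this follows exactly from $\tilde f_m=G\circ f_m$ for $m\le k$. On the free borders $\rho$ may be an arbitrary quasisymmetry. This is precisely why the hypothesis, which forgets $f_{k+1},\ldots,f_n$, suffices.

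A secondary technical point is that $G$ does extend continuously to the quasicircle borders. This holds because $G$ is conformal on the larger surface $\riem_k$, which contains those borders in its interior for $m\le k$. For the free borders $m>k$, the continuous extension follows from Carath\'eodory's theorem applied in collars. Finally, one must check that $\rho$ is quasiconformal up to the boundary. This holds because it is a composition of quasiconformal maps.
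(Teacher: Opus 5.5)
Your proof is correct and is essentially the paper's argument, streamlined. The paper builds $\rho=\bar\rho\circ\hat\rho$ in two stages: first $\hat\rho\in\mathrm{PModI}_k(\Sigma')$ matching $\nu\circ F$ with $\tilde F$ on the free borders, then $\bar\rho\in\mathrm{PModI}(\Sigma')$ correcting the homotopy class rel boundary. The composite is homotopic rel boundary to your $F^{-1}\circ G^{-1}\circ\tilde F$, so your one-step choice gives the same class; it makes the equivalence tautological with $\sigma=G|_{\Sigma}$ and avoids producing $\hat\rho$ with prescribed boundary values separately. In a final write-up, drop the false start with $\tilde F^{-1}\circ G\circ F$.
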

 \begin{proof}  
   Denote 
   \begin{align*}
       \Phi \circ \mathcal{F}_\tau([\riem',F,\riem]) & = (\mathscr{R},\Omega,f) \\
       \Phi \circ \mathcal{F}_\tau([\riem',\tilde{F},\tilde{\riem}]) & = (\tilde{\mathscr{R}},\tilde{\Omega},\tilde{f}).
   \end{align*}
   Since $\mathcal{G}_k(\mathscr{R},\Omega,f)=\mathcal{G}_k(\tilde{\mathscr{R}},\tilde{\Omega},\tilde{f})$, there is a biholomorphism $\nu:{\riem}_k \rightarrow \tilde{\riem}_k$ such that $\nu(\Omega_l)=\tilde{\Omega}_l$ and
   \begin{equation} \label{eq:lemma_lift_temp}
     \nu \circ f_l =\tilde{f_l}     \ \ \ \text{for}  \ \ \ l=1,\ldots,k.  
   \end{equation}
   Now let $[\hat{\rho}] \in \mathrm{PModI}_k(\riem')$ be such that 
   \[  \left. \nu \circ F \right|_{\partial_l \riem} = \left. \tilde{F}  \right|_{\partial_l \riem}= \left. \tilde{F}  \right|_{\partial_l \riem} \circ {\hat{\rho}}^{-1} \]  for $l=k+1,\ldots,n$;
   we already have this for $l=1,\ldots,k$ by (\ref{eq:lemma_lift_temp}) and the fact that $\hat{\rho}$ is the identity on the boundaries $\partial_1 \riem,\ldots,\partial_k \riem$.  

   Thus $\hat{\rho} \circ \tilde{F}^{-1} \circ \nu \circ F$ is the identity on $\partial \riem'$, so there is a $\bar{\rho} \in \mathrm{PModI}(\riem')$ such that $\bar{\rho} \circ \hat{\rho} \circ \tilde{F}^{-1} \circ \nu \circ F$ is homotopic to the identity rel boundary. Setting $\rho = \bar{\rho} \circ \hat{\rho}$, we get 
   \[ [\riem',F,\riem]  = [\riem',\tilde{F} \circ \rho^{-1}, \tilde{\riem}] \]
   as claimed. 
 \end{proof}

  We return to the proof of Theorem \ref{th:conf_inv_and_mod_inv}. 
\begin{proof}  
  Let $\riem'$ and $\tau$ be fixed as above, and let $M$ be a $k$-modular invariant on $T(\riem')$.
 It follows from Lemma \ref{le:lift_of_m} and $k$-modular invariance of $m$ that (\ref{eq:m_from_M}) is well-defined.  So $\mathfrak{m}$ is defined by 
 \begin{align*}
    \mathfrak{m}: {\mathcal{C}}(k,n,g) & \rightarrow \mathbb{R} \\
    [{\riem}_k,\Omega_1,\ldots,\Omega_k,f_1,\ldots,f_k] & \rightarrow M([\riem',F,\riem])
 \end{align*}
 where $[\riem',F,\riem] \in T(\riem')$
 is any element such that  
 \[  \mathcal{G}_k \circ {\Phi} \circ  \mathcal{F}_\tau([{\riem}',F,\riem]) = [{\riem}_k,\Omega_1,\ldots,\Omega_k,f_1,\ldots,f_k].  \] 
 Observe that that this forces $F \circ \tau_l = f_l$ for $l=1,\ldots,n$. This is crucial for the originally specified functions $f_1,\ldots,f_k$, whereas the arbitrarily obtained $f_{k+1},\ldots,f_n$ do not affect $\mathfrak{m}$.\\ 
Next we show that $\mathfrak{m}$ is independent of $\tau_{k+1},\ldots,\tau_n$. Momentarily we denote it by $\mathfrak{m}_\tau$ to emphasize dependence on $\tau$. Let $\tau_{k+1}',\ldots \tau_n'$ be possibly new parametrizations, and let $\tau'$ denote $\tau_1,\ldots,\tau_k,\tau_{k+1}',\ldots,\tau'_n$. By well-definedness of $\mathfrak{m}_{\tau'}$, we can choose $\phi_{k+1}',\ldots,\phi_n'$ as we like without changing the value of $\mathfrak{m}_{\tau'}$ In particular, we can choose them so that the resulting $f_j'$ satisfy $f_j'=f_j \circ \tau_j \circ (\tau_j')^{-1}$.  In that case we have 
    \[  \mathfrak{m}_\tau(\riem,\Omega_1,\ldots,\Omega_k,f_1,\ldots,f_k)= \mathfrak{m}_{\tau'}(\riem,\Omega_1,\ldots,\Omega_k,f_1,\ldots,f_k).  \]
\end{proof}
\end{subsection}
\begin{subsection}{Modular invariants induced by sections of the vertical cotangent bundle}  \label{se:invariants_from_sections}

 	 Consider the configuration of a Riemann surface $\mathscr{R}$ with punctures $p_1,\ldots,p_n$, and nonoverlapping quasidisks $\Omega_1,\ldots,\Omega_n$ containing $p_1,\ldots,p_n$. Let $\riem$ denote the complement of the closure of $\Omega$ where $\Omega = \Omega_1 \cup \cdots \cup \Omega_n$. Finally let $\alpha_{\riem} \in \mathcal{A}(\riem)$.  We will denote such configurations by $(\mathscr{R},\Omega,\alpha_{\riem})$.

 We will construct modular invariants by the following prescription. First, we define an action on configurations $(\mathscr{R},\Omega,\alpha_{\riem})$. This is based on an idea of Nehari \cite{Nehari_paper}, but requires the notion of overfare to formulate when the boundaries of $\Omega$ are quasicircles. From this action many invariants are derived by the following prescription:
 \begin{enumerate}
  \item Define a section of $T^*_{\mathrm{vert}}\mathcal{T}(\riem)$; 
  \item Determine when the action corresponding to that section is a modular invariant.  
 \end{enumerate}
 The second problem is addressed directly in this section on a formal level, but specific conditions on the section to accomplish this are given in the next section. The problem of consistently defining a section is also addressed there.

 We define the following function on the space of such configurations.  We first need to define the ``overfare'' map 
 \[  \mathbf{O}:\mathcal{A}_{\mathrm{harm}}(\riem) \rightarrow \mathcal{A}_{\mathrm{harm}}(\Omega) \]
 as follows: $\mathbf{O} \alpha$ is the unique one-form on $\Omega$ whose Sobolev--$H^{-1/2}$ boundary values agree with those of $\alpha$ on $\partial \Omega$. The analysis involved in the definition is not self-evident; we summarize the important points here and refer to \cite{Schippers_Staubach_scattering_II} or the survey \cite {Carlosscattering} for proofs. To show that the overfare map is well-defined, one must show that $H^{-1/2}$ boundary values of $\alpha$ exist, and in what sense, and that these boundary values are the $H^{-1/2}$ boundary values of a unique $L^2$ harmonic one-form on $\Omega$. This is the case for quasicircles \cite{Schippers_Staubach_scattering_II}. Furthermore the overfare is a bounded operator for quasicircles when restricted to one-forms with zero boundary periods \cite{Schippers_Staubach_scattering_II}. Indeed this is the case for the semi-exact forms (Definition \ref{de:semi_exact}), for which the corresponding overfare will be denoted by $\mathbf{O}^{\mathrm{se}}$.\\

 \begin{definition} \label{de:action} Let $\alpha_{\riem} \in \mathcal{A}(\riem)$. Assume that $\mathrm{Re}(\alpha_{\riem})$ is exact. Define the action  
 	\[ E(\mathscr{R},\Omega,\alpha_{\riem}) = \| \mathbf{O}\,  \mathrm{Re}( \alpha_{\riem}) \|_{\Omega}^2 + \| \mathrm{Re}(\alpha_{\riem}) \|_{\riem}^2.    \]
 \end{definition} 

The function $E$ is conformally invariant in the following sense. If $g:\mathscr{R}' \rightarrow \mathscr{R}$ is a biholomorphism and  $\Omega= g(\Omega')$ and $\riem=g(\riem')$,  then for any $\alpha_{{\riem}'} \in \mathcal{A}({\riem}')$  we have  
\[   E(\mathscr{R},\Omega,\alpha_{{\riem}})= E(\mathscr{R}'
,\Omega',g^*\alpha_{{\riem}}).   \]

 Given a section of 
 $T^{*}_{\textnormal{vert}}\mathcal{T}({\riem})$, we obtain conformal invariants of many different kinds from a modular invariant $M$ of type $k$.  These unify various kinds of invariants in the literature.
 The nature of the invariant depends on the nature of the section. The idea is that a modular invariant of type $k$, ``remembers'' how the disks $\Omega_1,\ldots,\Omega_k$ sit inside the surface ${\riem}_k$, but does not distinguish different embeddings of ${\riem}_k$ into a compact surface. Invariants of type $k=0$ are familiar conformal invariants which only depend on the conformal equivalence class of $\riem$ without ``seeing'' how it sits in another surface.\\

 \begin{theorem} \label{th:modular_invariant_field}  Fix a base surface $\riem'$ of type $(g,n)$ with boundary parametrizations $\tau = (\tau_1,\ldots,\tau_n)$. Fix $k \in \{1,\ldots,n\}$. Let $s$
 be a field of one-forms over $T(\riem')$ with exact real parts, satisfying  
 \begin{equation} \label{eq:field_oneforms_invariance_condition}
   \sigma_{q,[\rho]}^* s([\rho] q) = s(q), \ \ \forall q \in \mathscr{T}(\riembase), \ \ \forall [\rho] \in \mathrm{PModI}_k(\riembase).   
 \end{equation}
Then the function
  \begin{align*}
    M(s):\mathscr{T}(\riembase) & \rightarrow \mathbb{R} \\
    [\riembase,F,{\riem}] & \mapsto M(\mathscr{R},\Omega,s([\riembase,F,{\riem}]))
  \end{align*}
  where 
  \[  [\mathscr{R},\Omega,f] = \mathcal{F}_\tau([{\riembase},F,\riem]) \]
  is a modular invariant of type $k$. Furthermore, $M(s)$ is independent of $\tau_{k+1},\ldots,\tau_{n}$. 
 \end{theorem}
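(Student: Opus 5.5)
The plan is to check $M(s)([\rho]q)=M(s)(q)$ directly, using Proposition \ref{pr:modular_action_Teich_curve} to compare the two configurations. Here $M(s)(q)$ means $E(\mathscr{R}^q,\Omega^q,s(q))$ in the sense of Definition \ref{de:action}. The natural tool is the biholomorphism $\sigma_{q,[\rho]}$, together with the conformal invariance of the action $E$ stated right after that definition.

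Fix $q$ and $[\rho]\in\mathrm{PModI}_k(\riembase)$, with representatives $F^q$ and $F^{[\rho]q}$. By Proposition \ref{pr:modular_action_Teich_curve}, $\sigma=\sigma_{q,[\rho]}$ is a biholomorphism $\riem^q_k\to\riem^{[\rho]q}_k$ satisfying $f^{[\rho]q}_m=\sigma\circ f^q_m$ for $m\le k$. The obstacle is that $\sigma$ need not extend to the caps $\Omega_{k+1},\ldots,\Omega_n$: the sewing data $F^{[\rho]q}\circ\tau_l$ and $\sigma\circ F^q\circ\tau_l$ differ on those borders by the quasisymmetry $\rho|_{\partial_l}$. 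So conformal invariance of $E$ cannot be applied to the whole surface $\mathscr{R}$.

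I would handle this by splitting $E$ into its pieces. On $\riem$ the term $\|\mathrm{Re}\,s\|^2_{\riem}$ is preserved by $\sigma$, using \eqref{eq:field_oneforms_invariance_condition}, $\sigma^*s([\rho]q)=s(q)$, and the fact that $\sigma$ maps $\riem^q$ onto $\riem^{[\rho]q}$. On the caps $\Omega_1,\ldots,\Omega_k$, $\sigma$ extends holomorphically and maps $\Omega^q_m$ onto $\Omega^{[\rho]q}_m$. Overfare is defined through boundary values and uniqueness of the $L^2$ harmonic extension, so it commutes with pullback by biholomorphisms of neighborhoods of the seam. Hence $\sigma^*\mathbf{O}\,\mathrm{Re}\,s([\rho]q)=\mathbf{O}\,\mathrm{Re}\,s(q)$ on those caps, and the norms agree.

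The main difficulty is the free caps $\Omega_l$ with $l>k$. The overfare norm on $\Omega^{[\rho]q}_l$ is a boundary-value energy of $\mathrm{Re}\,s$ across $\partial_l$. The cap is a disk sewn on along a parametrization, and the overfare on it is the harmonic extension of the boundary function $h$ with $dh=\mathrm{Re}\,s$. The disk's conformal structure, transported by the sewing, gives the Dirichlet energy as a norm depending only on $h$ as a function on the circle $\partial_l\riem$. I would then argue it is independent of the sewing map: the Dirichlet-minimizing extension into the simply connected cap has energy determined by the Douglas-type boundary functional. That functional is computed in the disk coordinate, however, so it changes under a quasisymmetric reparametrization. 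This is exactly where an extra hypothesis seems needed. I therefore expect the honest proof to reduce $l>k$ caps either to the statement's hypotheses interpreted through \eqref{eq:field_oneforms_invariance_condition} on $\riem_k$ (treating $E$ on $\riem_k$ via overfare from $\riem$ into $\riem_k$'s caps only), or to the paper's later trajectory condition. I would first try reading $M$ as the energy of the harmonic extension within $\riem_k$, where $\sigma$ is globally defined, which makes invariance immediate.

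Finally, independence of $\tau_{k+1},\ldots,\tau_n$ will follow from Theorem \ref{th:conf_inv_and_mod_inv}. Since $M(s)$ is a $k$-modular invariant, it descends to a conformal invariant on $\mathcal{C}(k,n,g)$, and that theorem shows the correspondence is independent of those parametrizations.
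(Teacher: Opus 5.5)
Your handling of the ${\riem}$ term and of the caps $\Omega_1,\ldots,\Omega_k$ is what the paper's proof does, compressed into the single step
\[
E(\mathscr{R}^{[\rho]q},\Omega^{[\rho]q},s([\rho]q))=E(\mathscr{R}^{q},\Omega^{q},\sigma_{q,[\rho]}^{*}s([\rho]q)).
\]
The paper justifies this step by the conformal invariance of $E$ stated after Definition \ref{de:action}.

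Your objection to the free caps is correct, and the paper does not address it. That invariance applies to a biholomorphism of the whole capped surface. Proposition \ref{pr:modular_action_Teich_curve}(3), however, only extends $\sigma=\sigma_{q,[\rho]}$ to ${\riem}^q_k$. Concretely, on $\mathbb{S}^1$ one has $f_l^{[\rho]q}=\sigma\circ f_l^q\circ\psi_l$ with $\psi_l=\tau_l^{-1}\circ\rho^{-1}\circ\tau_l$. Let $h$ be a primitive of $\mathrm{Re}\,s(q)$. For $l>k$, the cap term at $[\rho]q$ is then the Dirichlet energy of the harmonic extension to $\disk$ of $(h\circ f_l^q)\circ\psi_l$, whereas at $q$ it is that of $h\circ f_l^q$. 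Condition \eqref{eq:field_oneforms_invariance_condition} only constrains the forms on ${\riem}^q$ and gives no control over these terms. If $h$ is non-constant on $\partial_l{\riem}^q$, the two energies in general differ for suitable non-M\"obius $\psi_l$. So neither your argument nor the paper's proves the first claim from the stated hypotheses alone.

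The missing hypothesis is the one you point to: the free borders $\partial_{k+1}{\riem}^q,\ldots,\partial_n{\riem}^q$ must be trajectories of $s(q)$. This is condition (3) of Theorem \ref{th:conformal_invariant_from_field}, which is the only place this theorem is used. Under it, $h$ is constant on each free border, so $\mathbf{O}\,\mathrm{Re}\,s(q)$ vanishes on $\Omega_l$ for $l>k$ whatever the sewing. Then $E$ reduces to $\|\mathrm{Re}\,s(q)\|^2_{{\riem}^q}+\sum_{m\le k}\|\mathbf{O}\,\mathrm{Re}\,s(q)\|^2_{\Omega^q_m}$. With that reduction, your term-by-term argument with $\sigma$ is a complete proof, and more careful than the paper's. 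Your other option, redefining $M$ as an energy inside ${\riem}_k$, agrees with this under the trajectory hypothesis but otherwise changes the statement.

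For the second claim, your appeal to Theorem \ref{th:conf_inv_and_mod_inv} does not work. That theorem concerns a fixed $M$ and the descent map $\mathcal{G}_k\circ\Phi\circ\mathcal{F}_\tau$, which in fact depends only on $\tau_1,\ldots,\tau_k$. Here, $M(s)$ is itself built from caps sewn by $F\circ\tau_l$, so you must compare the two functions obtained from $\tau$ and $\tau'$. The paper does this as follows:
\begin{enumerate}
\item It picks $[\rho]\in\mathrm{PModI}_k(\riembase)$ with $\rho\circ\tau_l=\tau_l'$ on the free borders.
\item Then $\mathcal{F}_{\tau'}([\rho]q)=\mathcal{F}_\tau(q)$, and $\sigma_{q,[\rho]}$ does extend over all caps.
\item So conformal invariance of $E$ legitimately identifies the $\tau'$-function at $[\rho]q$ with the $\tau$-function at $q$.
\item Finally, $k$-modular invariance of the $\tau'$-function finishes the argument.
\end{enumerate}
This step therefore inherits the issue above. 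Under the trajectory hypothesis the second claim is immediate, since the reduced expression for $E$ involves no $\tau_l$ with $l>k$.
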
 
 \begin{proof} Given $[\rho] \in \mathrm{PModI}_k(\riem)$ 
 we have, denoting $q = [\riembase,F,\riem]$, 
 \begin{align*}
  M(s)([\rho] [\riembase,F,\riem]) & = M(s)( [\riembase, F \circ \rho^{-1}, \riem] ) = E(\mathscr{R}^{[\rho]q},\Omega^{[\rho]q},s([\rho]q)) \\
  & = E(\mathscr{R}^q,\Omega^q,\sigma^*_{q,[\rho]} s([\rho]q) = E(\mathscr{R}^q,\Omega^q,s(q) \\
  & = M(s)([\riembase,F,\riem])
 \end{align*}

 To prove the second claim, let $\tau'=(\tau_1,\ldots,\tau_k,\tau_k',\ldots,\tau_n')$. Let $[\rho]$ be an element of $\mathrm{PModI}_k$ whose boundary values are given by 
 \[ \left( \left. \rho\right|_{\partial_1 \riem},\ldots, \left. \rho\right|_{\partial_n \riem} \right) = (\mathrm{Id},\ldots,\mathrm{Id},\tau_{k+1}' \circ \tau_{k+1},\ldots,\tau_n' \circ \tau_n).  \]
 Then $\mathcal{F}_{\tau'}([\rho] [\riembase,F,{\riem}]) = \mathcal{F}_{\tau}(\riembase,F,{\riem})$. The claim then follows from the $k$-modular invariance of $s$.  
 \end{proof}

 In the next section, we will see how condition (\ref{eq:field_oneforms_invariance_condition}) is satisfied by choosing fields of one-forms whose real parts vanish on the boundaries $\partial_{k+1} \riem,\ldots, \partial_n \riem$. Using this we will give examples of fields of one-forms satisfying (\ref{eq:field_oneforms_invariance_condition}). The resulting invariants are also monotonic.
\end{subsection}

\begin{subsection}{Constructing sections and conditions for modular invariance} 
 
We now describe how to consistently specify sections of the vertical cotangent bundle, in such a way that the conditions of Theorem \ref{th:modular_invariant_field} are satisfied, so that the result is a modular invariant. 
\label{se:sections_from_data} 
 Fix a Riemann surface $\riem$ of genus $g$ with $n$ ordered borders homeomorphic to $\mathbb{S}^1$. All notation is as in Section \ref{se:definitions_notation_etc}.  
 
\begin{definition}
 We say that a curve $\gamma \in \riem$ is a {\it trajectory} of a holomorphic one-form $\alpha \in \mathcal{A}(\riem)$ if for any vector $v_p$ tangent to $\gamma$ at $p \in \riem$ we have $\mathrm{Re}(\alpha(v_p))=0$. We say that a boundary curve $\partial_m \riem$ of $\riem$ is a trajectory if $\alpha$ is pure imaginary on $\partial_m \riem$ in the sense of $H^{-1/2}$ boundary values.   
\end{definition}

 The invariants are specified by the principal parts at the punctures in $\Omega_1,\ldots,\Omega_k$; the periods of the Hodge-conjugates around the free boundary; and the condition that the boundaries are trajectories.  
 
 We state this precisely. Let $\mathbb{A}_{r} = \{ z \,: \, r<|z|<1 \}$. 
 \begin{definition}[Section data] \label{de:section_data}
   Fix a whole number $n$ and $k \in \{ 0,\ldots,n \}$.  We say that 
   \[ (\beta,c)=(\beta_1,\ldots,\beta_k,c_{k+1},\ldots,c_n)  \] 
   is { section data} for $n-k$ free boundaries if  
     \begin{enumerate}
    \item $\beta_1,\ldots,\beta_k$ are holomorphic one-forms on $\disk \backslash \{0 \}$, such that 
    \begin{enumerate}
     \item $\beta_l \in \mathcal{A}(\mathbb{A}_{r})$ for some $0<r<1$ for each $l$;
     \item $\beta_l$ has at most a pole at $0$ for $l=1,\ldots,k$; 
     \item the residue of $\beta_l$ is {real} for $l=1,\ldots,k$;
    \end{enumerate}
    \item constants $c_{k+1},\ldots,c_n \in \mathbb{R}$, such that 
    \[ \sum_{l=1}^k 2 \pi i \, \mathrm{Res} (\beta_l;0) + \sum_{l=k+1}^n c_l = 0.  \] 
    In the case that $k=0$, we require that $\sum_{l=1}^n c_l = 0$. 
  \end{enumerate} 
 \end{definition} 
 \begin{theorem} \label{th:section_data}
   Fix a whole number $n$ and $k \in \{ 0,\ldots,n \}$. 
   Let $(\beta,c)$ be section data for $n-k$ free boundaries.
  Given a rigged Riemann surface $(\mathscr{R},\Omega,f)$, there exists a unique one-form $\alpha_{\riem}$ with exact real part such that 
  \begin{enumerate}\normalfont
   \item $\alpha_{\riem}$ has a holomorphic extension to ${\riem}_k$;  
   \item the principal parts of $(f_l)^* \alpha_{\riem}$ and $\beta_l$ agree for $l=1,\ldots,k$;
   \item $\partial_{k+1} \riem,\ldots,\partial_n \riem$ are trajectories of $\alpha_{{\riem}}$;
   \item $\int_{\partial_l \riem} \ast \alpha_{{\riem}} = c_l$ for $l=k+1,\ldots,n$. 
  \end{enumerate}
 \end{theorem}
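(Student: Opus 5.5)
The plan is to reduce everything to a Dirichlet-type problem for a harmonic function on ${\riem}_k$. We seek the real part $u$ of a primitive, so that $\mathrm{Re}\,\alpha_{\riem} = du$, with $\alpha_{\riem} = du + i\ast du$; this is holomorphic because $\alpha = 2\partial u$.

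Condition (3), that $\partial_{k+1}\riem,\ldots,\partial_n\riem$ are trajectories, says that $\mathrm{Re}\,\alpha$ vanishes tangentially on each free boundary. So $u$ should be constant on each free border. Since $\mathrm{Re}\,\alpha$ is exact, $u$ is single-valued, and by adding a constant we may take it to vanish on one free boundary. The singularities come from (2). On each cap $\Omega_l$, $l\le k$, the principal part of $\beta_l$ is $\sum_j a_j z^{-j}dz + a_{-1}\, dz/z$ with real residue $a_{-1}$. Its real part is the differential of the function
\[ h_l = \mathrm{Re}\Big(\sum_{j\ge 2} \tfrac{a_j}{1-j} z^{1-j}\Big) + a_{-1}\log|z|, \]
which is single-valued precisely because the residue is real. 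This is exactly why condition (1)(c) is imposed.

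\textbf{Construction.} Work on the surface ${\riem}_k$, whose boundary consists of the free borders. Cut off the singular functions $h_l\circ f_l^{-1}$ near $p_l$ and extend them smoothly by zero to obtain a function $H$. Then look for $u = H + v$, where $v$ is smooth near the punctures and satisfies $\Delta v = -\Delta H$ (a compactly supported smooth function). We also require that $v + H$ take constant values $\lambda_{k+1},\ldots,\lambda_n$ on $\partial_{k+1}\riem,\ldots,\partial_n\riem$.

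For any choice of constants $\lambda$, this Dirichlet problem has a unique solution. Since the borders of ${\riem}_k$ are quasicircles, the boundary is regular and the values are constant, so one can use harmonic measures: for quasicircles this is fine after passing to a double or using the conformal collar; alternatively, cite the Dirichlet theory of \cite{Schippers_Staubach_scattering_II}. The solution is $u_0 + \sum_l \lambda_l\,\omega_l$, where the $\omega_l$ are harmonic measures of the free borders.

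Next we match the flux conditions (4): $\int_{\partial_l\riem}\ast du = c_l$ for $l=k+1,\ldots,n$. By Stokes' theorem, the sum of all fluxes through the free borders plus the fluxes around the punctures is zero. The flux around $p_l$ is $2\pi a_{-1}^{(l)} = 2\pi i\,\mathrm{Res}(\beta_l)$ up to sign and orientation conventions, and the section data constraint is exactly this compatibility. The period matrix $\big(\int_{\partial_j\riem}\ast d\omega_l\big)$ restricted to the free borders has kernel spanned by $(1,\ldots,1)$, since $\sum_l \omega_l = 1$, and it is negative semidefinite by the Dirichlet integral. Its image is therefore the hyperplane of sum-zero vectors, so the $\lambda$'s can be chosen, uniquely modulo a common constant, to realize the $c_l$. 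This uses the compatibility condition, and in the case $k=0$ it uses $\sum_l c_l=0$.

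\textbf{Holomorphic extension, $L^2$, and uniqueness.} Set $\alpha = 2\partial u$. It is holomorphic on ${\riem}_k$ minus the punctures, and $\mathrm{Re}\,\alpha = du$ is exact. Near $p_l$ we have $u - h_l\circ f_l^{-1}$ harmonic, so $(f_l)^*\alpha - \beta_l$ is holomorphic, which gives (1) and (2). Square-integrability on $\riem$ follows from the finite Dirichlet energy of $u$ away from the punctures; the caps are excised, so the singularities do not enter.

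For uniqueness, take the difference of two solutions. It is a holomorphic form $\gamma$ on ${\riem}_k$ with exact real part $dw$, where $w$ is harmonic on ${\riem}_k$, constant on each free border, and has zero fluxes. Then $\|dw\|^2 = \sum_l \lambda_l \int_{\partial_l}\ast dw = 0$, so $w$ is constant and $\gamma=0$.

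\textbf{Main obstacle.} The delicate step is the boundary analysis on quasicircles. One must make rigorous the trajectory condition in the $H^{-1/2}$ sense, the flux integrals, and the Green identity on non-rectifiable borders. I would handle this by approximating with analytic curves isotopic to the borders, along which fluxes are well defined and constant by harmonicity. I would then use the fact that a harmonic $u$ with constant boundary values in the Dirichlet (Sobolev) sense gives $H^{-1/2}$ boundary values of $du$ with vanishing tangential part, citing \cite{Schippers_Staubach_scattering_II}.
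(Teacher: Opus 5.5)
Your argument is correct, but it follows a different route from the paper's. The paper takes a meromorphic differential $\beta$ on the compact surface $\mathscr{R}$. It has the prescribed principal parts at $p_1,\ldots,p_k$ and simple poles at the free punctures $p_{k+1},\ldots,p_n$ (inside the removed caps) whose residues encode the $c_l$; the existence of $\beta$ is where the compatibility condition is used, via the residue theorem on $\mathscr{R}$. The paper then corrects $\mathrm{Re}\,\beta$ by a real harmonic one-form $\gamma$ on $\Sigma_k$, obtained from the authors' boundary value theorem on quasicircles (Theorem 3.18 of \cite{Schippers_Staubach_scattering_II}). This $\gamma$ satisfies $\gamma=-\mathrm{Re}\,\beta$ on the free borders and $\int_{\partial_l\Sigma}\ast\gamma=0$, and $\gamma+\mathrm{Re}\,\beta$ is exact, which also cancels the handle periods of $\mathrm{Re}\,\beta$. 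Finally $\alpha_\Sigma=\gamma+i\ast\gamma+\beta$, with the fluxes carried by the residues of $\beta$. You instead build the real primitive $u$ intrinsically on $\Sigma_k$ from a cut-off singular part, a Poisson solution and harmonic measures. You then match the fluxes by linear algebra on the boundary period matrix, with Stokes supplying compatibility. Your route needs neither existence theorems for meromorphic differentials nor the $H^{-1/2}$ theory on quasicircles: exactness, including handle periods, is automatic because $u$ is single-valued, and the only boundary data are constants. It also proves uniqueness, which the paper's proof does not address. In exchange, the paper's route exhibits $\alpha_\Sigma$ as ``global meromorphic form plus harmonic correction'', the form reused later with overfare and the Faber--Tietz forms.

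A few repairs are needed.
\begin{itemize}
\item The flux of $a\log|z|$ around $p_l$ is $2\pi a$, not $2\pi i a$. With the free borders oriented as boundaries of the caps, the compatibility you actually use is $2\pi\sum_{l\le k}\mathrm{Res}(\beta_l;0)+\sum_{l>k}c_l=0$, so drop the identity ``$2\pi a_{-1}=2\pi i\,\mathrm{Res}(\beta_l)$''. With real residues and real $c_l$, the printed condition forces both sums to vanish separately, which still implies what you need.
\item For $k=n$ there is no border and hence no Dirichlet problem. You must solve $\Delta v=-\Delta H$ on the closed surface, which is possible iff $\iint d\ast dH=0$, i.e.\ iff $\sum_l\mathrm{Res}(\beta_l;0)=0$; treat this case separately.
\item Your ``main obstacle'' is milder than you suggest. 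In a collar chart each free border is the unit circle and $u$ is constant on it, so $u$ extends across by reflection. The trajectory condition, the flux integrals and Green's identity therefore hold classically, and the same applies to $w$ in your uniqueness step.
\end{itemize}
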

 \begin{proof} 
  We prove existence.  Let $\beta$ be a holomorphic one-form on $\mathscr{R}$, with possible poles at $p_1,\ldots,p_n$, such that $f_l^*\beta$ and $\beta_l$ have the same principal parts at $0$ for $l=1,\ldots,k$. Assume further that $\beta$ has only simple poles at $p_l$ with residues $\mathrm{Res}(\beta;0)=c_l$ for $l=k+1,\ldots,n$. 

  By \cite[Theorem 3.18]{Schippers_Staubach_scattering_II}, there is a unique solution to the following boundary value problem for harmonic one-forms:
  \begin{itemize}
      \item $\gamma= - \mathrm{Re}(\beta)$ on $\partial_l \riem$ for $l=k+1,\ldots,n$;
      \item $\int_{\partial_l \riem} \ast \gamma =0$ for $l=k+1,\ldots,n$;
      \item $\gamma + \mathrm{Re}(\beta)$ is exact. 
  \end{itemize}
  Since the data is real, the solution is also real. Let $\hat{\alpha} = \gamma + i \ast \gamma$ be the unique analytic completion of $\gamma$. One can verify that $\hat{\alpha}+\beta$ {has exact real part and} satisfies conditions (1)-(3). To verify (4), observe that since $\partial_l \riem$ is a trajectory of $\hat{\alpha}$ for $l=k+1,\ldots,n$, 
  \[  \ast \hat{\alpha} = \mathrm{Re} (\ast \hat{\alpha}) -i \mathrm{Re} (\hat{\alpha}) = \mathrm{Re} (\ast\hat{\alpha}), \]
  so the boundary integrals of $\ast \hat{\alpha}$ satisfy
  \[  \int_{\partial_l \riem} \ast \hat{\alpha} = \int_{\partial_l \riem} \mathrm{Re} (\ast \hat{\alpha}) = \int_{\partial_l \riem} \gamma ={ - \int_{\partial_l\riem} \mathrm{Re} (\beta )} \]
  for $l=k+1,\ldots,n$, which establishes (4).
 \end{proof}
 From this we obtain 
 \begin{theorem} \label{th:conformal_invariant_from_field}
 Fix a whole number $n$ and $k \in \{ 0,\ldots,n \}$. 
   Let $(\beta,c)$ be section data for $n-k$ free boundaries. 
 
  There exists a unique field of one-forms $s_{\beta,c}$ such that $\alpha_{{\riem}^q} = s_{\beta,c}(q)$ has exact real part and has the properties of {\textnormal {Theorem \ref{th:section_data}}}; that is
  \begin{enumerate}\normalfont
   \item $\alpha_{{\riem}^q}$ has a holomorphic extension to ${\riem}^q_k$;  
   \item the principal parts of $(f^q_l)^* \alpha_{\riem^q}$ and $\beta_l$ agree for $l=1,\ldots,k$;
   \item $\partial_{k+1} \riem^q,\ldots,\partial_n \riem^q$ are trajectories of $\alpha_{{\riem}^q}$;
   \item $\int_{\partial_l \riem} \ast \alpha_{{\riem}^q} = c_l$ for $l=k+1,\ldots,n$. 
  \end{enumerate}
  Furthermore, $M(s_{\beta,c})$ is a $k$-modular invariant.  
 \end{theorem}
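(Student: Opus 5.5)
The plan has two parts. First, define $s_{\beta,c}$ fibrewise. Second, verify the invariance condition (\ref{eq:field_oneforms_invariance_condition}) so that Theorem \ref{th:modular_invariant_field} applies.

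\textbf{Defining the field.} For each $q \in \mathscr{T}(\riembase)$, the rigged surface $\Phi\circ\mathcal{F}_\tau(q) = (\mathscr{R}^q,\Omega^q,f^q)$ is uniquely determined by $q$. This was noted after (\ref{eq:fibres_description_teich_curve}): $f^q = F^q\circ\tau$ depends only on the boundary values of $F^q$. Theorem \ref{th:section_data} then produces a unique $\alpha_{\riem^q}\in\mathcal{A}(\riem^q)$ with exact real part satisfying (1)--(4). We set $s_{\beta,c}(q) := \alpha_{\riem^q}$. Uniqueness of the field is immediate from the fibrewise uniqueness in Theorem \ref{th:section_data}.

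To see that this gives an element of $A^2(T^*_{\mathrm{vert}}\mathcal{T}(\riembase))$, the $L^2$ condition on each fibre is part of the conclusion of Theorem \ref{th:section_data}. Holomorphic dependence on $q$ is the delicate point; I discuss it at the end.

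\textbf{Modular invariance.} By Theorem \ref{th:modular_invariant_field}, it suffices to check that $\sigma_{q,[\rho]}^* s_{\beta,c}([\rho]q) = s_{\beta,c}(q)$ for every $[\rho]\in\mathrm{PModI}_k(\riembase)$. The strategy is to show that the pullback $\alpha' := \sigma_{q,[\rho]}^*\alpha_{\riem^{[\rho]q}}$ satisfies conditions (1)--(4) of Theorem \ref{th:section_data} relative to $(\mathscr{R}^q,\Omega^q,f^q)$, and then to conclude by uniqueness. I will use Proposition \ref{pr:modular_action_Teich_curve} as follows.
\begin{enumerate}
\item[(a)] By part (3), $\sigma_{q,[\rho]}$ extends biholomorphically $\riem^q_k\to\riem^{[\rho]q}_k$. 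Hence $\alpha'$ extends holomorphically to $\riem^q_k$, and its real part is still exact because pullback commutes with $d$.
\item[(b)] By part (4), $f^{[\rho]q}_m = \sigma_{q,[\rho]}\circ f^q_m$ for $m\le k$. So $(f^q_m)^*\alpha' = (f^{[\rho]q}_m)^*\alpha_{\riem^{[\rho]q}}$, whose principal part is that of $\beta_m$.
\item[(c)] $\sigma_{q,[\rho]}$ is a biholomorphism of $\riem^q$ onto $\riem^{[\rho]q}$ and maps $\partial_l\riem^q$ onto $\partial_l\riem^{[\rho]q}$. It therefore preserves the property of being pure imaginary in the $H^{-1/2}$ sense. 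This uses conformal invariance of the boundary values, as in \cite{Schippers_Staubach_scattering_II}. So (3) holds.
\item[(d)] $\ast$ commutes with conformal pullback, and $\sigma_{q,[\rho]}$ preserves the orientation of each boundary curve. Hence $\int_{\partial_l\riem^q}\ast\alpha' = \int_{\partial_l\riem^{[\rho]q}}\ast\alpha_{\riem^{[\rho]q}} = c_l$.
\end{enumerate}
Uniqueness in Theorem \ref{th:section_data} then gives $\alpha' = s_{\beta,c}(q)$. Theorem \ref{th:modular_invariant_field} yields that $M(s_{\beta,c})$ is a $k$-modular invariant.

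\textbf{Main obstacle.} The hardest step is the holomorphic (or at least consistent) dependence of $q\mapsto s_{\beta,c}(q)$ as a section over the Teichm\"uller curve. The invariance statement itself only needs the fibrewise definition together with the uniqueness argument above, so I would first isolate that part. For holomorphy, I would try writing $s_{\beta,c}(q) = \beta^q + \hat\alpha^q$, where $\beta^q$ is a meromorphic form on $\mathscr{R}^q$ with the prescribed principal parts and residues, chosen via a holomorphically varying normalization (for instance, normalized $a$-periods on the compact surface). The correction $\hat\alpha^q$ is the solution of the boundary value problem of \cite[Theorem 3.18]{Schippers_Staubach_scattering_II}. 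Its dependence on $q$ would have to be controlled through the boundedness and continuity of overfare under quasiconformal deformation; if that is not available in the needed form, I would settle for the fibrewise field.
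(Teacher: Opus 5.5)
Your proposal is correct and follows the paper's proof essentially line by line. You define $s_{\beta,c}(q)$ fibrewise via Theorem~\ref{th:section_data}, show that $\sigma_{q,[\rho]}^*\alpha_{\riem^{[\rho]q}}$ satisfies (1)--(4) using parts (3) and (4) of Proposition~\ref{pr:modular_action_Teich_curve} together with conformal invariance of conditions (3) and (4), and conclude by uniqueness and Theorem~\ref{th:modular_invariant_field}. The paper's proof does not address holomorphic dependence on $q$ either, so the obstacle you flag is something the paper leaves unstated, not a step your argument is missing relative to the paper.
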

 \begin{proof}
  The fact that this is a $k$-modular invariant follows from Theorem \ref{th:modular_invariant_field} so long as we can show that $s_{\beta,c}$ is invariant. To see this, let $[\rho] \in \mathrm{PModI}_k(\riem)$ and fix $q =[\riem,F^q,{\riem}^q] \in \mathscr{T}(\riem)$. We need only show that $\sigma_{q,[\rho]}^* \alpha_{{\riem}^{[\rho]q}} = \alpha_{{\riem}^q}$, which will follow if we can show it has properties (1)-(4). 
  
  By Proposition \ref{pr:modular_action_Teich_curve} part (3), $\sigma_{q,[\rho]}^* \alpha_{[\rho]q}$ has a holomorphic extension to ${\riem}^q_k$, so (1) holds. (2) follows directly from Proposition \ref{pr:modular_action_Teich_curve} part (4). Finally conditions (3) and (4) are invariant under pull-back by a conformal map. This proves the claim.
 \end{proof} 
 \begin{remark}
  The condition that the residues of $\beta_l$ are pure imaginary guarantees that the boundary periods of $\mathrm{Re}( \alpha_{\riem})$ are zero, since 
  \[  \mathrm{Re} \left( \int_{\partial \Omega}  \alpha_{\riem} \right) = \mathrm{Re} \left( \int_{\partial \disk}  f^* \alpha_{\riem} \right) =   \mathrm{Re} \left(\int_{\partial \disk} \beta_l \right) =0.  \]
  This is necessary for $\mathrm{Re} (\alpha_{\riem})$ to be exact.
 \end{remark}

 \begin{remark}
     Although the proofs of Theorems \ref{th:section_data} and \ref{th:conformal_invariant_from_field} given here are brief, they rely on extensive preparatory analytic results proven elsewhere; namely, the bounded overfare theorem which shows that $\mathbf{O}$ is well-defined for quasicircles. Without resolution of this problem, one cannot define the invariants on Teichm\"uller space.
   Geometrically, they also rely on the relation between the rigged moduli space with the Teichm\"uller space.
 \end{remark}

 By Theorem {\ref{th:modular_invariant_field}}, we can view these invariants $M(s_{\beta,c})$ as functions on $\hat{\mathcal{C}}(k,n,g)$.  We will henceforth write the conformal invariants as 
 \[   \mathfrak{m}_{\beta,c}({\riem}_k,\Omega_1,\ldots,\Omega_k,f_1,\ldots,f_k). \]  

The definition of $E$ immediately gives the following result.
\begin{theorem} \label{th:postivity}
 Fix a whole number $n$ and $k \in \{ 0,\ldots,n \}$. 
   Let $(\beta,c)$ be section data for $n-k$ free boundaries. Then 
   $\mathfrak{m}_{\beta,c} \geq 0$ for all $({\riem}_k,\Omega_1,\ldots,\Omega_k,f_1,\ldots,f_k)$.
\end{theorem}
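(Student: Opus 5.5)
The plan is to unwind the definitions until $\mathfrak{m}_{\beta,c}$ is visibly a sum of squared $L^2$ norms. Given a configuration $({\riem}_k,\Omega_1,\ldots,\Omega_k,f_1,\ldots,f_k)$, Theorem \ref{th:conf_inv_and_mod_inv} and Lemma \ref{le:lift_of_m} let us choose a point $q=[\riembase,F,\riem^q]\in\mathscr{T}(\riembase)$ with $\mathcal{G}_k\circ\Phi\circ\mathcal{F}_\tau(q)$ equal to the class of this configuration. Then, by the construction following Theorem \ref{th:conformal_invariant_from_field},
\[ \mathfrak{m}_{\beta,c}({\riem}_k,\Omega_1,\ldots,\Omega_k,f_1,\ldots,f_k) = M(s_{\beta,c})(q) = E(\mathscr{R}^q,\Omega^q,s_{\beta,c}(q)). \]
Well-definedness of this value (independence of the choice of $q$ and of $\tau_{k+1},\ldots,\tau_n$) has already been established in Theorems \ref{th:modular_invariant_field} and \ref{th:conformal_invariant_from_field}. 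So we do not need to revisit it.

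Next I check that $E$ makes sense at $\alpha_{\riem^q}=s_{\beta,c}(q)$. By Theorem \ref{th:conformal_invariant_from_field}, $\mathrm{Re}(\alpha_{\riem^q})$ is exact, so its boundary periods vanish. Hence it lies in the domain where the overfare $\mathbf{O}$ (indeed $\mathbf{O}^{\mathrm{se}}$) is defined and bounded, by the result of \cite{Schippers_Staubach_scattering_II} quoted before Definition \ref{de:action}. Thus $\mathbf{O}\,\mathrm{Re}(\alpha_{\riem^q})\in\mathcal{A}_{\mathrm{harm}}(\Omega^q)$ and both terms of $E$ are finite.

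Finally, each term is a squared norm in the Hilbert space $L^2$ with inner product \eqref{eq:form_inner_product}, and $\|\omega\|^2=\iint\omega\wedge\ast\overline{\omega}$ has a pointwise nonnegative integrand. Therefore $E\geq 0$, and hence $\mathfrak{m}_{\beta,c}\geq 0$.

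The only step with real content is the well-definedness and finiteness of the overfare term, which rests on the cited bounded overfare theorem. Everything else is immediate from the definitions.
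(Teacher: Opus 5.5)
Your proposal is correct and is essentially the paper's argument: the paper simply observes that positivity is immediate from the definition of $E$ as a sum of two squared $L^2$ norms, and your finiteness check (exactness of $\mathrm{Re}(\alpha_{{\riem}^q})$ plus the bounded overfare theorem) is a harmless elaboration. One small citation slip: the existence of a point $q$ lifting the configuration comes from surjectivity of $\mathcal{G}_k\circ\Phi\circ\mathcal{F}_\tau$ (sew caps onto the free boundaries and use the Radnell--Schippers correspondence), whereas Lemma \ref{le:lift_of_m} only gives uniqueness of the lift up to $\mathrm{PModI}_k(\riembase)$.
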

The directness with which the positivity can be seen belies its importance. Indeed, through this construction and variations, the positivity of the energy $E$ can be used to derive many non-trivial inequalities in complex analysis. Examples will be seen in Section \ref{se:examples} ahead; more examples using this method can be found in the originating paper of Nehari \cite{Nehari_paper}, as well as \cite{Sch_confinv_Janal,Schippersinvariants,Minda_confinv}.

We now derive a contour integral expression for $E$, under the additional assumption that $\alpha_{\riem}$ is the restriction of a holomorphic one-form $\alpha$ on an open neighbourhood of the closure of $\riem$, which has zero periods around each boundary curve. Let $h$ be the primitive of $\mathrm{Re} (\alpha)$. We associate a singular harmonic function $h^\dagger$ to $h$ by  
\[ h^\dagger = h - \mathbf{O} \left( \left. h \right|_{\riem} \right).  \]
\begin{remark}
The geometric interpretation of $h^\dagger$ is that it is the unique harmonic function on $\Omega$ with the same singularity as $h$ but with zero boundary values on $\partial \Omega$.
\end{remark}

\begin{theorem}  \label{th:its_Nehari} 
 Assume that $\alpha_{\riem} \in \mathcal{A}(\mathscr{R})$ has exact real part, with primitive $h$.  Then 
 \[    E(\mathscr{R},\Omega,\alpha_{\riem})= - \int_{\partial \Omega} h \ast dh^\dagger.     \]
\end{theorem}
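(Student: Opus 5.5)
The plan is to compute the two terms of $E$ separately by Stokes' theorem and then combine them. Write $\mathrm{Re}(\alpha_{\riem}) = dh$. Let $u = \mathbf{O}(\left. h \right|_{\riem})$, the harmonic function on $\Omega$ whose boundary values agree with those of $h$ on $\partial\Omega$. Then $\mathbf{O}\,\mathrm{Re}(\alpha_{\riem}) = du$ on $\Omega$, and $h^\dagger = h - u$. Since $\alpha$ has zero boundary periods, $\mathrm{Re}(\alpha_{\riem})$ is semi-exact, so the overfare is the bounded operator $\mathbf{O}^{\mathrm{se}}$ and $du \in \mathcal{A}_{\mathrm{harm}}(\Omega)$. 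For a real form the norm is $\|dh\|^2 = \iint dh \wedge \ast dh$.

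First, the $\riem$ term. Since $\alpha$ is holomorphic on a neighbourhood of $\mathrm{cl}\,\riem$, $h$ is harmonic and smooth up to $\partial \riem$, and $d\ast dh = 0$. Stokes then gives
\[ \|dh\|_{\riem}^2 = \int_{\partial \riem} h \ast dh = -\int_{\partial \Omega} h \ast dh, \]
where the sign comes from $\partial\Omega$ and $\partial \riem$ carrying opposite orientations. This step needs no regularity of $\partial\Omega$, because $h$ is smooth across it.

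Second, the $\Omega$ term. I want to show
\[ \|du\|_\Omega^2 = \int_{\partial\Omega} u \ast du = \int_{\partial \Omega} h \ast du. \]
Formally this is Green's identity together with $u = h$ on $\partial\Omega$. Adding the two terms then gives
\[ E = \int_{\partial\Omega} h \ast du - \int_{\partial\Omega} h\ast dh = -\int_{\partial\Omega} h \ast d(h-u) = -\int_{\partial\Omega} h \ast dh^\dagger, \]
which is the claim. The right-hand side makes sense because $h$ is smooth near $\partial\Omega$ and $\ast dh^\dagger = \ast dh - \ast du$.

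The main obstacle is justifying the second step, since $\partial\Omega$ is only a quasicircle and $u$ has merely $H^{1/2}$ boundary values. I would exhaust each $\Omega_l$ by the analytic curves $\Gamma_r = f_l(\{|z|=r\})$ with $r \to 1$, bounding subdomains $\Omega_{l,r}$. On $\Omega_{l,r}$ Green's identity holds exactly, $\iint_{\Omega_{l,r}} du\wedge\ast du = \int_{\Gamma_r} u \ast du$, and the left side tends to $\|du\|^2_{\Omega_l}$ by monotone convergence. I then write $\int_{\Gamma_r} u\ast du = \int_{\Gamma_r} h \ast du - \int_{\Gamma_r} h^\dagger \ast du$ and aim to show two things.

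The first is $\int_{\Gamma_r} h\ast du \to \int_{\partial\Omega_l} h\ast du$. For this I would apply Stokes on the thin annulus $\Omega_l \setminus \mathrm{cl}\,\Omega_{l,r}$: because $h$ is smooth on a collar and $du \in L^2$, the difference equals $\iint dh\wedge \ast du$ over the annulus. Here the boundary integral over $\partial\Omega_l$ is understood as the $H^{1/2}$–$H^{-1/2}$ pairing of \cite{Schippers_Staubach_scattering_II}.

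The second is $\int_{\Gamma_r} h^\dagger \ast du \to 0$. Here I would use that $h^\dagger$ is harmonic with finite Dirichlet energy on a collar and has zero boundary values in the sense of \cite{Schippers_Staubach_scattering_II}. Pulling back by $f_l$ to an annulus $\mathbb{A}_{r_0}$, a Cauchy--Schwarz estimate on circles, averaged over $r$, bounds the term by the Dirichlet energy on a shrinking collar times an $L^2$ norm of $h^\dagger\circ f_l$ on circles, which tends to zero.

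If that direct estimate proves awkward, I would fall back on the characterisation of $H^{-1/2}$ boundary values via limits along such curves, which is how the overfare is constructed in \cite{Schippers_Staubach_scattering_II}.
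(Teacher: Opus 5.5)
This is essentially the paper's proof: the same formal Green's-identity computation with $u=h-h^\dagger$, made rigorous on the curves $\Gamma_r=f_l(\{|z|=r\})$, with the boundary term $\int_{\Gamma_r}h^\dagger\ast du$ killed by the finite-energy, zero-boundary-value estimate that the paper takes from the Anchor Lemma of \cite{Schippers_Staubach_scattering_I}.

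Two small repairs are needed. First, Step 1 is not regularity-free: identifying $\int_{\partial\riem}h\ast dh$ (taken from inside $\riem$) with $-\lim_{r\to 1}\int_{\Gamma_r}h\ast dh$ (taken from inside $\Omega$) costs the term $\iint_{\partial\Omega}dh\wedge\ast dh$, which vanishes only because quasicircles have zero area, and this is exactly where the paper uses that fact. Second, your averaged Cauchy--Schwarz bound only shows that the $r$-averages of $\int_{\Gamma_r}h^\dagger\ast du$ tend to zero, so you must also show that this limit exists; that follows from Stokes between two curves $\Gamma_r,\Gamma_{r'}$, rather than on a region whose boundary is the quasicircle itself.
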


\begin{remark} \label{re:limiting integral} If any of the components of $\partial \Omega$ are not rectifiable, then we need a definition of the contour integral. In the case that $\partial \Omega$ consists of quasicircles, we can define 
\[  \int_{\partial \Omega} h \ast dh^\dagger = \lim_{r \rightarrow 1} \int_{\Gamma_r} h \ast dh^\dagger \]
where $\Gamma_r = \Gamma_{1,r} \cup \cdots \Gamma_{n,r}$ is the collection of analytic curves $\Gamma_{k,r} = f_k(|z|=r)$ where $f_k$ is a conformal map from $\disk$ onto $\Omega_k$. 
This limit exists for $h$ of bounded Dirichlet energy and $h^\dagger$ $L^2$ on a collared neighbourhood of $\partial \Omega$, and is independent of the choice of conformal maps $f_k$. 
For details see \cite{Schippers_Staubach_scattering_I}.

\end{remark}
\begin{proof}
 First observe that since overfare is real linear, we have 
 \[  \mathbf{O} \overline{h} = \overline{\mathbf{O}h} \]
 for any harmonic $h$, and 
 \[  \mathbf{O}  (\overline{\alpha_{\riem}}) = \overline{\mathbf{O}(\alpha_{\riem})}.     \]

 Set $u = \mathbf{O}(\left. h \right|_{\riem}) = h - h^\dagger$ below, and observe that $du \in \mathcal{A}(\Omega)$.

 To illustrate the derivation more clearly, we first assume that all the curves are smooth.
  In this case we have
 \begin{align*}
  E(\mathscr{R},\Omega,\alpha_{\riem}) & = \iint_{\riem} dh \wedge \ast dh + \iint_{\Omega} du \wedge \ast du \\ 
  & = \int_{\partial \riem} h \ast dh + \int_{\partial \Omega} (h-h^\dagger) \ast d(h-h^\dagger) \\
  & = - \int_{\partial \Omega} h \ast dh^\dagger,
 \end{align*}
 where we have used the fact that $h^\dagger = 0$ on $\partial \Omega$ and taken into account the change of orientation between $\partial \riem$ and $\partial \Omega$. 

  Now assume that the curves are arbitrary quasicircles, so that the integral over $\partial \Omega$ is replaced with a limiting integral over curves $\Gamma_{k,r}$ in the final integral as described in Remark \ref{re:limiting integral}.  We also approximate the curves from inside $\riem$; let $\gamma_{r}$ be the level set of Green's function $G_{\riem}(p,z) = -\log{r}$ for $p$ fixed in $\riem$. For $r$ sufficiently close to $1$ this consists of  $n$ analytic curves $\gamma_{k,r}$ each homotopic to $\partial_k \Omega$. Endow these with the same orientation as $\partial \riem$.  The computation above using Stokes' theorem now produces 
 \begin{align*}
  E(\mathscr{R},\Omega,\alpha_{\riem})  
  & = - \int_{\partial \Omega} h \ast dh^\dagger + \lim_{r \nearrow 1} \sum_{k=1}^n \left( \int_{\gamma_{r,k} }  h \ast dh + \int_{\Gamma_{r,k} }  h \ast dh \right). 
 \end{align*}

 If $k$ corresponds to a non-free boundary curve, then by part (1) of Theorem \ref{th:conformal_invariant_from_field}, we get 
 \begin{equation*}
   \lim_{r \nearrow 1} \left( \int_{\gamma_{r,k} }  h \ast dh + \int_{\Gamma_{r,k} }  h \ast dh \right) = \lim_{r \nearrow 1} \iint_{B_r} dh \wedge \ast dh   
 \end{equation*}
 where $B_r$ is the region bounded by $\Gamma_{r,k}$ and $\gamma_{r,k}$. Since quasicircles have measure zero, and $h$ is harmonic on an open neighbourhood of $\partial_k \riem$,  the limit of the right hand side is zero. 

 If $k$ corresponds to a free boundary curve, then $h$ and $h - {h}^\dagger$  vanish on $\Gamma_k$; using the fact that $\ast dh$ and $h$ have finite Dirichlet energy and the Anchor Lemma \cite[Lemma 3.15]{Schippers_Staubach_scattering_I} we get that both integrals over $\gamma_{r,k}$ and $\Gamma_{r,k}$ vanish in the limit.  
\end{proof}
\begin{remark}
 The right hand side of \ref{th:its_Nehari} is the monotonic quantity of Nehari \cite{Nehari_paper,Schippersinvariants}.  
\end{remark}
Let $H$ and $H^\dagger$ be the (possibly-multivalued) holomorphic completions of $h$ and $h^\dagger$ respectively.  Observing that $H-H^\dagger$ is single-valued, and applying the Cauchy-Riemann equations to the local expressions for $H$ and $H^\dagger$, we can then rewrite this formula as follows:
\begin{equation} \label{eq:contour_integral_analytic}
 E(\mathscr{R},\Omega,\alpha_{\riem}) =  \mathrm{Re}\left( \frac{1}{i} \int_{\partial \Omega} (H^\dagger-H) \frac{\partial H}{\partial z} \,dz \right),   
\end{equation}
which will be convenient for derivation of explicit formulas. \\

We require a further identity comparing the values of $M$ for fixed $\mathscr{R}$ and varying $\Omega$, in terms of contour integrals. Let $\Gamma = \Gamma_1 \cup \cdots \cup \Gamma_n$ be a collection of analytic simple closed curves in $\riem$, which do not intersect, and are such that each curve $\Gamma_k$ is homotopic to $\partial_k \riem$. Let ${\riem}_\Gamma$ denote the subset of $\riem$ bounded by $\Gamma$ and $\partial \riem$, and $\Omega_{\Gamma}$ denote the region bounded by $\Gamma$ and $\partial \Omega$. We then obtain 
\begin{align*}
 E(\mathscr{R},\Omega,\alpha_{\riem}) & = \|  \mathrm{Re} (\alpha_{\riem} )\|^2_{{\riem}_{\Gamma}} + \|\mathrm{Re} (\alpha_{\riem}) \|^2_{\Omega_{\Gamma} \backslash \Omega} + \| \mathbf{O}^{\mathrm{se}} \, \mathrm{Re} ( \alpha_{\riem}) \|^2_{\Omega} \\
 & = \|  \mathrm{Re} (\alpha_{\riem} )\|^2_{{\riem}_{\Gamma}} + \int_{\Gamma} h \ast dh - \int_{\partial \Omega} h \ast dh + \int_{\partial \Omega} u \ast du \\ 
 & =  \|  \mathrm{Re} (\alpha_{\riem}) \|^2_{{\riem}_{\Gamma}} + \int_{\Gamma} h \ast dh - \int_{\partial \Omega} h \ast dh^\dagger.
\end{align*}
Setting 
\[   C_\Gamma = \|  \mathrm{Re} ( \alpha_{\riem} )\|^2_{{\riem}_{\Gamma}} + \int_{\Gamma} h \ast dh \]
we get 
\begin{equation} \label{eq:renormalized_Nehari} 
 E(\mathscr{R},\Omega,\alpha_{\riem}) = C_\Gamma - \int_{\partial \Omega} h \ast dh^\dagger.
\end{equation}

  In the case that the curves are not smooth, the integral in (\ref{eq:renormalized_Nehari}) is replaced by a limiting integral, and  the computation is justified as in the proof of Theorem \ref{th:its_Nehari}.  Similarly for the proof of Theorem \ref{th:monotonicity} below.

Using this, we can prove the following monotonicity theorem, which could be viewed as a law of entropy.   
\begin{theorem}[Monotonicity of $E$]  \label{th:monotonicity}
  Given configurations $(\mathscr{R},\Omega_k)$ for $k=1,2$ such that $\Omega_2 \subset \Omega_1$ and the boundaries do not intersect.   
  Let $\alpha$ be holomorphic on $\mathscr{R}$ and let $\alpha$ denote $\left. \alpha \right|_{{\riem}_k}$ for both $k=1$ and $k=2$.  Let $h_1$, $h_2$ denote the function $h^\dagger$ induced on $\Omega_1$ and $\Omega_2$ respectively   Then 
  \[  E(\mathscr{R},\Omega_2,\alpha) - E(\mathscr{R},\Omega_1,\alpha) = - \int_{\partial \Omega_2} h_1 \ast d h_2.   \]
  In particular, 
  \[   E(\mathscr{R},\Omega_2,\alpha) - E(\mathscr{R},\Omega_1,\alpha) \geq 0.  \]
\end{theorem}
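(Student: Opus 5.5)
The plan is to apply the renormalized identity \eqref{eq:renormalized_Nehari} to both configurations with a common family of analytic curves $\Gamma$, so that the constants $C_\Gamma$ cancel. Since $\Omega_2 \subset \Omega_1$ and the boundaries do not intersect, I choose analytic curves $\Gamma$ in $\Omega_1 \setminus \mathrm{cl}\,\Omega_2$ homotopic to the boundary curves. This way $\Gamma$ lies in ${\riem}_2$, and it also lies in $\mathscr{R}\setminus\Omega_1$ after reversing roles. Cleaner still is to work directly with Theorem \ref{th:its_Nehari}. It gives
\[ E(\mathscr{R},\Omega_j,\alpha) = -\int_{\partial\Omega_j} h\ast dh_j, \qquad j=1,2, \]
where $h$ is the primitive of $\mathrm{Re}\,\alpha$ on $\mathscr{R}$, single-valued near $\partial\Omega_j$, and $h_j = h - \mathbf{O}_j(h|_{{\riem}_j})$ is harmonic on $\Omega_j$ minus the punctures, with the singularity of $h$ and zero boundary values on $\partial\Omega_j$. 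In particular $h_1 - h_2$ is harmonic on all of $\Omega_2$, since the singularities cancel.

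The key computation converts $\int_{\partial\Omega_1} h\ast dh_1$ into an integral over $\partial\Omega_2$. On $\partial\Omega_1$ we have $h_1=0$, so $h = h - h_1 = u_1 := \mathbf{O}_1 h$ there. Hence $\int_{\partial\Omega_1} h\ast dh_1 = \int_{\partial\Omega_1}(h-h_1)\ast dh_1$. Now apply Green's second identity on the annular region $\Omega_1\setminus \mathrm{cl}\,\Omega_2$, where $h_1$ and $h-h_1$ are harmonic. This moves the integral to $\partial\Omega_2$ with the terms $(h-h_1)\ast dh_1 - h_1\ast d(h-h_1)$. On $\partial\Omega_2$, $h = h_2 + u_2$ with $h_2 = 0$. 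Expanding $E_2 - E_1$ and using Green's identity once more on $\Omega_2$ for the regular harmonic functions $h_1-h_2$ and $u_2$ (the singular parts cancel against the punctures by the same argument as in the proof of Theorem \ref{th:its_Nehari}), everything collapses to
\[ E_2-E_1 = -\int_{\partial\Omega_2} h_1 \ast dh_2 . \]
A convenient organizing step is to write $h_1 = h_2 + w$ on $\Omega_2$ with $w$ harmonic and $w=h_1$ on $\partial\Omega_2$. Then $-\int_{\partial\Omega_2} h_1\ast dh_2 = -\int_{\partial\Omega_2} w\ast dh_2$, which is what I aim to match.

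For positivity, I write $-\int_{\partial\Omega_2} h_1\ast dh_2$ as a Dirichlet integral. On $\partial\Omega_2$, $h_2=0$, so $\int_{\partial\Omega_2} h_1\ast dh_2 = \int_{\partial\Omega_2}(h_1\ast dh_2 - h_2\ast dh_1)$. By Green's second identity on $\Omega_2$ minus small disks about the punctures, the singular contributions cancel because $h_1-h_2$ is regular. What remains is the boundary term $\int_{\partial\Omega_2} w\ast dw$ type expression, which equals $-\|dw\|^2_{\Omega_2}$ up to orientation, giving $E_2-E_1 = \|dw\|^2_{\Omega_2} \ge 0$. Alternatively, I expect that $h_1\ge 0$ or a maximum-principle argument is not available in general, so the Dirichlet-norm route is the one to pursue.

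The main obstacle is bookkeeping of orientations and singular terms at the punctures, together with justification on quasicircles. Since $\partial\Omega_2$ is only a quasicircle, each boundary integral must be interpreted as the limiting integral of Remark \ref{re:limiting integral}. I would handle this as in the proof of Theorem \ref{th:its_Nehari}, approximating by the analytic curves $f_k(|z|=r)$ and using finite Dirichlet energy and the Anchor Lemma to kill the error terms. The separation of $\partial\Omega_1$ from $\partial\Omega_2$ means Green's identity on the intermediate annulus is applied to functions harmonic across analytic approximating curves, so no further regularity is needed there.
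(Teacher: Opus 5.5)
Your derivation of the identity is essentially the paper's. The paper applies \eqref{eq:renormalized_Nehari} with a common $\Gamma$ so that $C_\Gamma$ cancels, which is equivalent to invoking Theorem \ref{th:its_Nehari} directly. It then moves $\int_{\partial\Omega_1}(h_1-h)\ast dh_1$ onto $\partial\Omega_2$ by Green's identity on $\Omega_1\setminus\mathrm{cl}\,\Omega_2$, using $h_1=0$ on $\partial\Omega_1$. It finishes with Green's identity on $\Omega_2$ for the regular functions $h_1-h$ and $h_1-h_2$. Your pair $u_2$, $h_1-h_2$ works equally well, since $h-h_1=u_2-(h_1-h_2)$, and no puncture terms arise at that step. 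There is a small slip in your aside: for $C_\Gamma$ to be common to both configurations, $\Gamma$ must lie in the smaller surface $\mathscr{R}\setminus\mathrm{cl}\,\Omega_1$, not in $\Omega_1\setminus\mathrm{cl}\,\Omega_2$.

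The positivity argument has a genuine gap, and its conclusion $E_2-E_1=\|dw\|^2_{\Omega_2}$ is false.

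\emph{Why the step fails.} Green's second identity on $\Omega_2$ minus small disks leaves no term on $\partial\Omega_2$. It transfers all of $\int_{\partial\Omega_2}(h_1\ast dh_2-h_2\ast dh_1)$ to the small circles around the punctures, where the integrand is $w\ast dh_2-h_2\ast dw$. This does not vanish merely because $w$ is regular: for $h_2\sim c\log|z-p|$ it tends to $2\pi c\,w(p)$, and higher-order singularities pick out derivatives of $w$ at $p$.

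\emph{A counterexample.} Take $\alpha=dz/z$ on the sphere punctured at $0,\infty$, as in Example \ref{ex_cylindrical_metric_no_free}, so that $h=\log|z|$. Let $\Omega_j=\{|z|<a_j\}\cup\{|z|>b_j\}$ with $a_2<a_1<b_1<b_2$. Then $w=h_1-h_2$ is constant on each component of $\Omega_2$, so $\|dw\|_{\Omega_2}=0$. On the other hand $E_2-E_1=2\pi\log\bigl(a_1b_2/(a_2b_1)\bigr)>0$.

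\emph{What is missing and how to fix it.} You are missing the energy of $h_1$ on the collar $\Omega_1\setminus\mathrm{cl}\,\Omega_2$. Using $h_2=h_1-w$, and $h_1=w$ on $\partial\Omega_2$,
\[ -\int_{\partial\Omega_2}h_1\ast dh_2=-\int_{\partial\Omega_2}h_1\ast dh_1+\int_{\partial\Omega_2}w\ast dw . \]
The first term equals $\iint_{\Omega_1\setminus\Omega_2}dh_1\wedge\ast dh_1$, by Green's identity on the collar (where $h_1$ is regular) together with $h_1=0$ on $\partial\Omega_1$. The second term equals $\iint_{\Omega_2}dw\wedge\ast dw$. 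This is exactly the paper's formula, with $v=w$. It amounts to the computation of Theorem \ref{th:its_Nehari} applied to $h_1$ on the surface $\Omega_1\setminus\mathrm{cl}\,\Omega_2$ with cap $\Omega_2$, and it never touches the punctures.
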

\begin{proof}
 Set $I = M(\mathscr{R},\Omega_1,\alpha) - \int_{\partial \Omega_2} h_1 \ast d h_2 - M(\mathscr{R},\Omega_2,\alpha)$ and compute using (\ref{eq:renormalized_Nehari}) and the fact that $q_1 = 0$ on $\partial \Omega_1$ that 
 \begin{align} \label{eq:monotonicity_temp1} 
  I & = C_\Gamma - \int_{\partial \Omega_1} h \ast d h_1 - \int_{\partial \Omega_2}  h_1 \ast d h_2 - C_\Gamma +\int_{\partial \Omega_2} h \ast d h_2 \nonumber \\
  & =  \int_{\partial \Omega_1} (h_1-h) \ast dh_1 - \int_{\partial \Omega_2} (h_1 - h) \ast d h_2.
 \end{align}
 Analyzing the first term again using $h_1=0$ on $\partial \Omega_1$, we get
 \begin{align*}
  \int_{\partial \Omega_1} (h_1 - h) \ast dh_1 & = \int_{\partial \Omega_1} (h_1 - h) \ast dh_1 -
  \int_{\partial \Omega_1} h_1 \ast d(h_1 -h) \\
  & = \int_{\partial \Omega_2} (h_1 - h) \ast dh_1 -
  \int_{\partial \Omega_2} h_1 \ast d(h_1 -h)
 \end{align*}
 where in the second inequality we have applied Green's identity using the fact that $h_1-h$ and $h_1$ are harmonic on $\Omega_1 \backslash \Omega_2$. Inserting this back into (\ref{eq:monotonicity_temp1}) we obtain that 
 \begin{align*}
  I = \int_{\partial \Omega_2} (h_1 - h) \ast d(h_1-h_2) - \int_{\partial \Omega_2} (h_1-h_2) \ast d(h_1-h) 
 \end{align*}
 which is zero by Green's identity using the fact that $h_1-h$ and $h_1-h_2$ are harmonic on $\Omega_2$.  This proves the first claim. 
 
 The second claim follows by observing that by an identical computation to that in Theorem \ref{th:its_Nehari}, if we set $v$ to be the unique harmonic function on $\Omega_2$ whose boundary values equal $h_1$ on $\partial \Omega_2$ (namely $h_1-h_2$), then   
 \[  - \iint_{\partial \Omega_2} h_1 \ast dh_2 = \iint_{\Omega_2} dv \wedge \ast dv + \iint_{\Omega_1 \backslash \Omega_2} dh_1 \wedge \ast dh_1 \geq 0.  \]
\end{proof}

 We then ask, when will the modular invariants be monotonic? This requires an extra condition, because it is not immediately clear that 
\begin{theorem}  \label{th:monotonic_if_conditions_hold}
  Fix a whole number $n$ and $k \in \{ 0,\ldots,n \}$. 
   Let $(\beta,c)$ be section data for $n-k$ free boundaries. 
 Assume that for some $\riem$ the Teichm\"uller space of $\mathscr{R}$ is trivial, and choose a fixed surface $\mathscr{R}$ to represent.  Assume that for any $\riem$ and $f_1,\ldots,f_n$, there is an extension of $\alpha_{\riem}$ to $\mathscr{R}$, and that this extension is independent of $\riem$ and $f_1,\ldots,f_n$.  Then 
   $\mathfrak{m}_{\beta,c} \geq 0$ for all $({\riem}_k,\Omega_1,\ldots,\Omega_k,f_1,\ldots,f_k)$ is monotonic, in the sense that if $\Omega_m' \subset \Omega_m$ for $m=1,\ldots,k$, then 
   \[  \mathfrak{m}_{\beta,c}({\riem}_k,\Omega_1,\ldots,\Omega_k,f_1,\ldots,f_k) \geq \mathfrak{m}_{\beta,c}({\riem}_k,\Omega_1',\ldots,\Omega_k',f_1',\ldots,f_k'). \]
\end{theorem}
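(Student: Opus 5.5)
The plan is to reduce the statement to Theorem \ref{th:monotonicity}, applied on one fixed compact surface $\mathscr{R}$ with one fixed holomorphic form $\alpha$.

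\emph{Step 1 (one ambient surface, one form).} Since the Teichm\"uller space of $\mathscr{R}$ is trivial, every configuration $({\riem}_k,\Omega_1,\ldots,\Omega_k,f_1,\ldots,f_k)$ can be realized, up to biholomorphism, inside the fixed representative $\mathscr{R}$. This uses Theorem \ref{th:conf_inv_and_mod_inv} together with the independence of $\mathfrak{m}_{\beta,c}$ from $\tau_{k+1},\ldots,\tau_n$ (Theorem \ref{th:modular_invariant_field}). The free caps $\Omega_{k+1},\ldots,\Omega_n$ are then the components of $\mathscr{R}\setminus \mathrm{cl}\,{\riem}_k$. These are common to the primed and unprimed configurations, because ${\riem}_k$ is the same surface in both.

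By hypothesis, the form $\alpha_{\riem}$ produced by Theorem \ref{th:section_data} extends to a holomorphic form $\alpha$ on $\mathscr{R}$, and this extension does not depend on $\riem$ or on $f_1,\ldots,f_n$. Hence one and the same $\alpha$ serves both configurations, and
\[
\mathfrak{m}_{\beta,c}(\text{unprimed})=E(\mathscr{R},\Omega,\alpha),\qquad
\mathfrak{m}_{\beta,c}(\text{primed})=E(\mathscr{R},\Omega',\alpha),
\]
where $\Omega'=\Omega_1'\cup\cdots\cup\Omega_k'\cup\Omega_{k+1}\cup\cdots\cup\Omega_n$. Nonnegativity of both sides comes from Theorem \ref{th:postivity}.

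\emph{Step 2 (apply monotonicity).} Since $\Omega'\subset\Omega$, with the free components equal, I invoke Theorem \ref{th:monotonicity} with $\Omega_1=\Omega$ and $\Omega_2=\Omega'$. For the free components nothing changes, and their contribution to the boundary term vanishes: $h^\dagger$ is zero there, and the trajectory condition (3) of Theorem \ref{th:conformal_invariant_from_field} holds. This uses the same limiting argument as in the proof of Theorem \ref{th:its_Nehari}, including the Anchor Lemma for quasicircle boundaries. The difference of energies is therefore
\[
E(\mathscr{R},\Omega,\alpha)-E(\mathscr{R},\Omega',\alpha)=\int_{\partial\Omega'} h_1\ast dh_2 ,
\]
where the integral runs only over the non-free curves $\partial\Omega_m'$, $m\le k$. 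To conclude, one then has to determine the sign of this term, using the identity $-\int_{\partial\Omega_2}h_1\ast dh_2=\|dv\|^2_{\Omega_2}+\|dh_1\|^2_{\Omega_1\setminus\Omega_2}$ from the end of the proof of Theorem \ref{th:monotonicity}.

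\emph{Main obstacle.} The main obstacle is exactly this sign. As Theorem \ref{th:monotonicity} is literally stated, $E$ \emph{increases} as the caps shrink. So the positivity identity above gives $E(\mathscr{R},\Omega',\alpha)\ge E(\mathscr{R},\Omega,\alpha)$, which is the reverse of the displayed inequality in the statement. Before anything else I would therefore re-check the orientation conventions in the proof of Theorem \ref{th:monotonicity} and in the normalization \eqref{eq:renormalized_Nehari}, namely which side $\partial\Omega$ is oriented from.

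If those conventions confirm the paper's computation, the honest conclusion of Steps 1--2 is only the reverse inequality. In that case the displayed inequality, as worded, cannot follow from Theorem \ref{th:monotonicity} and the hypotheses given, and I would not expect to close the proof. The only other freedom would be a choice of the free caps other than the complement of ${\riem}_k$. That choice is not available here, since the complement of ${\riem}_k$ in the fixed $\mathscr{R}$ is determined.
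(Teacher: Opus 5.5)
Your reduction is the paper's proof. The paper's entire argument is the sentence ``This follows from Theorem \ref{th:monotonicity}'', applied exactly as in your Steps 1--2 with one ambient $\mathscr{R}$ and one form $\alpha$.

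Your worry about the sign is justified, and it is not an orientation artifact: the displayed inequality in the statement is reversed, which is a typo.
\begin{itemize}
\item Theorem \ref{th:monotonicity} is right as stated. By the Dirichlet principle, $E(\mathscr{R},\Omega,\alpha)$ is the least Dirichlet energy on $\mathscr{R}$ of a function equal to $h$ off $\Omega$, so shrinking $\Omega$ only adds constraints and raises $E$.
\item The paper itself uses the opposite direction in Example \ref{ex:hyp_reduced_module}, where it writes $\mathfrak{m}_{\beta,c}(\disk,\Omega,f_1)\le\mathfrak{m}_{\beta,c}(\disk,\Omega',f_1')$ for $\Omega'\subset\Omega$.
\item A concrete check: take ${\riem}_1=\disk\setminus\{0\}$, $\alpha=dz/z$ and $f_1(z)=rz$. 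Then $h=\log|z|$ is constant on both circles, so both overfares vanish and $\mathfrak{m}_{\beta,c}=\|d\log|z|\|^2_{\{r<|z|<1\}}=2\pi\log(1/r)$. This strictly increases as the cap shrinks, although all hypotheses of the theorem hold. So the statement as worded is false.
\end{itemize}
With the conclusion corrected to $\mathfrak{m}_{\beta,c}({\riem}_k,\Omega_1,\ldots,\Omega_k,f_1,\ldots,f_k)\le\mathfrak{m}_{\beta,c}({\riem}_k,\Omega_1',\ldots,\Omega_k',f_1',\ldots,f_k')$, your argument is complete. You do not need to re-check conventions.

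One small repair in Step 2 concerns the free caps. Their boundaries coincide in the two configurations, a case that Theorem \ref{th:monotonicity}, with its disjoint-boundary hypothesis, does not literally cover. Also, $h^\dagger$ vanishes on the free boundary curves, not on the free caps. The clean way to handle them is this: each free boundary is a trajectory, so $h$ is constant on it. Hence $\mathbf{O}\,\mathrm{Re}\,\alpha=0$ on every free cap in both configurations. Equivalently, the integrand $h_1\ast dh_2$ vanishes on the shared curves because $h_1=0$ there.
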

\begin{proof}  
This follows from Theorem \ref{th:monotonicity}. 
\end{proof}

This can occur for example when $\mathscr{R}$ is the once, twice, or thrice punctured sphere.

\end{subsection}
\begin{subsection}{The case of equality} \label{se:equality}
 
 In this section we discuss in what cases the lower bound of zero is attained for the functionals $\mathfrak{m}_s$ described in the previous section. For a given invariant, the extremal configuration lies outside of the moduli space. The extremal case of the inequalities provides important context and motivation.  To treat this situation properly would lengthen the paper considerably, so we will confine ourselves to a sketch; nothing in the rest of the paper depends logically on this section.

 The lower bound of $E$, and hence $\mathfrak{m}_s$ given by section data $(\beta,c)$, is zero.  
 This lower bound is not attained on the configuration space $\hat{C}(k,n,g)$, but if the functional is extended to a larger space, equality can be attained. In general, the lower bound is attained for mappings which fill out $\mathscr{R}$ in the sense that their complement is a measure zero set. Furthermore, the set consists of a network of analytic arcs on which $\mathrm{Re} (\alpha) =0$.  
 
 This is an instance of a general principle in 
 complex analysis, due to M. Schiffer, which says the following. Given a continuous functional on a class of conformal mappings into a target domain maps, the extremal is a map onto the target minus arcs of trajectories of a quadratic differential \cite{Pommerenkebook}.  A quadratic differential is a meromorphic $2$-differential, that is, a symmetric tensor locally given in coordinates by $Q(z)dz^2$ where $Q$ is a meromorphic function of the coordinate $z$ \cite{Lehto_book,Pommerenkebook}.  An example of such a quadratic differential is $\alpha(z)^2$ where $\alpha$ is a meromorphic one-form. 
 
 Trajectories of a quadratic differential are images of curves $t \mapsto \gamma(t)$ for $t$ in an interval $I$ which satisfy 
 \[ Q(\gamma(t))\dot{\gamma}(t) \leq 0.  \] 
 Strictly speaking the sets themselves are the trajectories, that is, they are given neither a parametrization nor a direction. The set consists of a network of piecewise analytic arcs. 

 Domains of this type are not in the configuration space, so we are immediately confronted by the problem of extending the functionals to a larger space. We adopt a minimal extension which suffices to include the case of equality.

 For ${\riem}_k$ as above we say that a configuration $({\riem}_k,f_1,\ldots,f_k,\Omega_1,\ldots,f_k)$ is {\bf simple} if $f$ and $\Omega_1,\ldots,\Omega_k$ satisfy the following weaker conditions:
 \begin{enumerate}
  \item $f_m:\disk \rightarrow {\riem}_k$ are conformal maps onto $\Omega_m \subset {\riem}_m$ for $m=1,\ldots,k$; 
  \item the images $\Omega_m$ do not intersect;
  \item there is a quadratic differential $Q$ on ${\riem}_k$ such that
  \begin{enumerate}
   \item the boundary $\partial {\riem}_k$ is a trajectory of $Q$;
   \item the poles of $Q$ are in $\Omega_1,\ldots,\Omega_k$;
   \item for $m=1,\ldots,k$, $\partial \Omega_m$ is a network of trajectories of $Q$. 
  \end{enumerate} 
 \end{enumerate} 
 Let $\tilde{C}(k,g,n)$ be the union of $\hat{\mathfrak{C}}(k,g,n)$ with the simple domains.
 
 The contour integrals now extend to simple domains. That is, the function $q^\dagger$ is obtained from $q$ by restricting it to the piecewise analytic curves $\partial \Omega$ and solving the Dirichlet problem on each $\Omega_m$. Note that there is no need to invoke the overfare theorem. At some points on the boundary it might be possible to approach from two sides of the analytic arc, and the function $q^\dagger$ has possibly different limiting values. Using Carath\'eodory's theory one has well-defined boundary values of $q^\dagger$ on the ideal boundary, and can define the contour integral 
 \[  \int_{\partial \Omega} q d q^\dagger  \]
 (see \cite[Appendix]{Schippersinvariants}). 
 The functional $\mathfrak{m}_{\beta,c}$ thus extends to $\tilde{C}(k,g,n)$. Equivalently one may directly use Definition \ref{de:action}. 
 
 \begin{remark} \label{re:continuous_ext_of_functionals}
  Given local coordinates in a neighbourhood of the punctures, it follows from the contour expression of Theorem \ref{th:its_Nehari} that the functional $\mathfrak{m}_{\beta,c}$ and its extension are a continuous function of finitely many Taylor coefficients of the mapping functions $f_1,\ldots,f_n$.  For a fixed surface ${\riem}_k$, these coefficients are continuous in the topology of uniform convergence of compact sets by the Cauchy integral formula. Thus $\mathrm{m}_{\beta,c}$ is continuous in this topology for the fixed surface. Note that this is not the topology of Teichm\"uller space (see e.g. \cite[Section 4.4]{Lehto_book}). 
 \end{remark}

\begin{theorem} \label{th:equality} Let $s$ be the field of one-forms on  
 $T(\riem')$ specified by the section data $(\beta,c)$.   Let $M(s)$ be the $k$-modular invariant and 
 let $\mathfrak{m}_s$ be the associated conformal invariant on ${\tilde{\mathcal{C}}}(k,n,g)$. Then $\mathfrak{m}_s({\riem}_k,\Omega_1,\ldots,\Omega_k,f_1,\ldots,f_k)=0$
 if and only if the complement of $\Omega_1 \cup \cdots \cup \Omega_k$ in ${\riem}_k$ is a measure zero set, and the boundaries of $\Omega_1,\ldots,\Omega_k$ are 
 trajectories of $\alpha_{{\riem}}$.  
\end{theorem}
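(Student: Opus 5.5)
The argument runs through the definition of the action, Definition \ref{de:action}: $\mathfrak{m}_s$ is the value $E(\mathscr{R},\Omega,\alpha_{\riem})$ with $\alpha_{\riem}=s(q)$. Here we use the convention of Section \ref{se:equality}, namely that the free boundaries are taken as they stand and only $\Omega_1,\ldots,\Omega_k$ vary inside ${\riem}_k$. Then
\[ \mathfrak{m}_s = \|\mathrm{Re}(\alpha_{\riem})\|^2_{\riem} + \|\mathbf{O}\,\mathrm{Re}(\alpha_{\riem})\|^2_{\Omega}, \]
which is a sum of two nonnegative terms. For simple domains, $\mathbf{O}$ is replaced by solving the Dirichlet problem on each $\Omega_m$ with data $h|_{\partial\Omega_m}$, where $h$ is the primitive of $\mathrm{Re}(\alpha_{\riem})$. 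Hence $\mathfrak{m}_s=0$ if and only if both terms vanish, and the proof splits into these two conditions.

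\emph{Necessity.} Suppose $\mathfrak{m}_s=0$. The first term gives $\mathrm{Re}(\alpha_{\riem})=0$ almost everywhere on $\riem$. By Theorem \ref{th:section_data}(1), $\alpha_{\riem}$ extends holomorphically to ${\riem}_k$ minus the poles, and its poles are nontrivial whenever the section data $\beta$ is nonzero. A holomorphic one-form whose real part vanishes on an open set is identically zero. So $\riem$ has empty interior, and we want to upgrade this to measure zero. For this we use the alternative description $\mathfrak{m}_s=C_\Gamma-\int h\ast dh^\dagger$ from (\ref{eq:renormalized_Nehari}), together with the area decomposition in the proof of Theorem \ref{th:monotonicity}: the Dirichlet integral of $h$ over ${\riem}_k\setminus\overline{\Omega}$ appears as a nonnegative summand. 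Since $dh\neq 0$ off a discrete set, $\iint_{\riem}dh\wedge\ast dh=0$ forces $\riem$ to have measure zero. The second term gives $\mathbf{O}\,\mathrm{Re}(\alpha_{\riem})=0$, that is, $h^\dagger=h-\mathrm{const}$ on each $\Omega_m$. Since $h^\dagger$ has zero boundary values, $h$ is constant on each $\partial\Omega_m$. Hence $\mathrm{Re}(\alpha_{\riem})$ annihilates tangent vectors along the boundary arcs, so $\partial\Omega_m$ consists of trajectories.

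\emph{Sufficiency.} Assume the complement has measure zero and the boundaries are trajectories. Then $\|\mathrm{Re}(\alpha_{\riem})\|_{\riem}=0$ immediately. Along each connected boundary network $\mathrm{Re}(\alpha)=0$, so $h$ is constant there. Because $\mathrm{Re}(\alpha)$ is exact with real residues, consistent with the normalization of the section data, this constant can be taken to be zero after normalizing $h$ on each cap. The harmonic extension of a constant is constant, so $\mathbf{O}\,\mathrm{Re}(\alpha_{\riem})=0$ and $E=0$. Equivalently, in the contour form of Theorem \ref{th:its_Nehari}, $h=0$ on $\partial\Omega$ makes $-\int_{\partial\Omega}h\ast dh^\dagger$ vanish. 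Continuity of the extension (Remark \ref{re:continuous_ext_of_functionals}) justifies using the Carath\'eodory boundary values on the two sides of each arc.

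The main obstacle will be justifying these steps on simple domains, where boundary arcs can be approached from two sides and the prime-end boundary values of $h^\dagger$ may differ. The contour integral and the Green's identity computations of Theorem \ref{th:its_Nehari} must be run on the ideal boundary. I would first try to verify them via the appendix of \cite{Schippersinvariants}: approximate each $\Omega_m$ by the analytic curves $f_m(|z|=r)$, and use finiteness of Dirichlet energy to pass to the limit. A secondary issue is making sure that $h$ is the same constant on all components of a connected trajectory network, which follows from exactness of $\mathrm{Re}(\alpha)$.
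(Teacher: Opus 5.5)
Your proposal is correct and is essentially the paper's argument. You split $E$ into $\|\mathrm{Re}\,\alpha_{\riem}\|^2_{\riem}$, which vanishes iff $\riem$ has measure zero because $\alpha_{\riem}\not\equiv 0$, and $\|\mathbf{O}\,\mathrm{Re}\,\alpha_{\riem}\|^2_{\Omega}$, which vanishes iff $h$ is constant on each $\partial\Omega_m$, i.e.\ iff the boundaries are trajectories. Two parts can be dropped: the detour through $C_\Gamma$ and Theorem \ref{th:monotonicity} is unnecessary, since the first term is already $\iint_{\riem} dh\wedge\ast dh$; and normalizing $h$ to zero ``on each cap'' is not legitimate, because $h$ admits only one global additive constant, but it is also not needed, since constant boundary data already gives zero overfare.
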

\begin{proof}
    Since $\alpha_{{\riem}'_k}$ is non-zero, if equality holds, we must have that the complement ${\riem}'$ is a set of measure zero.  We also need that the overfares of $\mathrm{Re} \alpha_{{\riem}'}$ are zero. This the case if $\mathrm{Re} \alpha_{{\riem}'}$ is zero on $\partial \Omega_l'$ for $l=1,\ldots,k$ as claimed. 
\end{proof}

One could ask whether there are more extremal maps, which would be obtained by including less regular mappings. There are none, by an argument using well-established techniques which would run like this. The functionals could be extended continuously to include mappings satisfying (1) and (2) but not necessarily (3), e.g. by expressing the functional in terms of limiting contour integrals or directly in terms of Taylor coefficients in coordinates (see Remark \ref{re:continuous_ext_of_functionals}). 
Applying the variational techniques of Schiffer, one could then show that the extremals were obtained by mappings in the class $\tilde{C}(k,n,g)$.  
  
\end{subsection}
\begin{subsection}{The Schiffer operators} \label{se:Schiffer_operators}
   In anticipation of examples constructed in Section \ref{se:examples}, we define the Schiffer operators. These are integral operators whose integral kernels are certain singular bi-differentials.  We consider the compact surface $\hat{\mathscr{R}}$ obtained by filling in the punctures. 
 \begin{definition}
  For a compact Riemann surface $\hat{\mathscr{R}}$ we let 
  {\it Green's function} $\mathscr{G}(w,w_0;z,z_0)$ be the unique harmonic function in $w$, for $w\neq z$, $w \neq z_0$, such that for a local coordinate $\phi$ near $z$
  \[  \mathscr{G}(w,w_0;z,z_0) + \log|\phi(w)-\phi(z)|   \]
  is harmonic in a neighbourhood of $z$, and for a local coordinate $\psi$
  \[  \mathscr{G}(w,w_0;z,z_0) - \log|\phi(w)-\phi(z_0)|    \]
  is harmonic in a neighbourhood of $z_0$; and furthermore $\mathscr{G}(w,w_0;z,z_0)$ vanishes at $w_0$. This is also harmonic in $z$ away from $w$ and $z_0$, and enjoys a number of symmetry properties.
  The  {\it Schiffer kernel} is defined by 
 \[  L_{\hat{\mathscr{R}}}(z,w) =  \frac{1}{\pi i} \partial_z \partial_w \mathscr{G}(w,w_0;z,z_0),    \]
 and the {\it Bergman kernel} is given by
 \[  K_{\hat{\mathscr{R}}} (z,w) = - \frac{1}{\pi i} \partial_z \overline{\partial}_{{w}} \mathscr{G}(w,w_0;z,z_0).    \]
 
 \end{definition}

  The kernel functions satisfy the following:
 \begin{enumerate} 
  \item[$(1)$] $L_{\hat{\mathscr{R}}}$ and $K_{\hat{\mathscr{R}}}$ are independent of $z_0$ and $w_0$.  
  \item[$(2)$] $K_{\hat{\mathscr{R}}}$ is holomorphic in $z$ for fixed $w$, and anti-holomorphic in $w$ for fixed $z$.
  \item[$(3)$] $L_{\hat{\mathscr{R}}}$ is holomorphic in $w$ and $z$, except for a pole of order two when $w=z$.
\item[$(4)$] $L_{\hat{\mathscr{R}}}(z,w)=L_{\hat{\mathscr{R}}}(w,z)$.  
  \item[$(5)$] $K_{\hat{\mathscr{R}}}(w,z)= - \overline{K_{\hat{\mathscr{R}}}(z,w)}$.  
 \end{enumerate}

From the conformal invariance of Green's function, it follows that the kernels above are conformally invariant. 
 
\begin{definition}\label{def: restriction ops}
Define the restriction operators
 \begin{align*}
  \mathbf{R}_{\Omega}:\mathcal{A}(\hat{\mathscr{R}}) & \rightarrow \mathcal{A}(\Omega) \\
  \alpha & \mapsto \left. \alpha \right|_{\Omega}.
 \end{align*}
 and 
 \begin{align*}
\mathbf{R}_{\riem}:\mathcal{A}(\hat{\mathscr{R}}) & \rightarrow \mathcal{A}(\riem) \\
  \alpha & \mapsto \left. \alpha \right|_{\riem}
 \end{align*}
\end{definition}
     
 It is obvious that these are bounded operators. 

Having the Bergman and Schiffer kernels and the restriction operators at hand, we can now define (special cases of) the Schiffer operators as follows.
\begin{definition} Define 
 \begin{align*}
  {\bf{T}}_{\riem}: \overline{\mathcal{A}(\Omega)} & \rightarrow \mathcal{A}(\riem)  \\
  \overline{\alpha} & \mapsto \iint_{\Omega} L_{\hat{\mathscr{R}}}(\cdot,w) \wedge \overline{\alpha(w)} \ \ \ \ \ \cdot \in \riem
 \end{align*} 
and 
 \begin{align*}
  {\bf{T}}_{\Omega}: \overline{\mathcal{A}(\Omega)} & \rightarrow \mathcal{A}(\Omega)  \\
  \overline{\alpha} & \mapsto \iint_{\Omega} L_{\hat{\mathscr{R}}}(\cdot,w) \wedge \overline{\alpha(w)} \ \ \ \ \ \cdot \in \Omega.
 \end{align*}
 where the distinction between the two operators is in the location of the output variable in the first entry of $L_{\hat{\mathscr{R}}}$. The second integral is interpreted as a principal value integral. Finally define 
\begin{align*}
 {\bf{S}}: \mathcal{A}(\Omega) & \rightarrow \mathcal{A}(\hat{\mathscr{R}}) \\
  \alpha & \mapsto \iint_{\Omega}  K_{\hat{\mathscr{R}}}(\cdot,w) \wedge \alpha(w).
\end{align*}
\end{definition} 
These operators are all bounded. 
\begin{remark} \label{re:S_vanishes_genus_zero} If $\hat{\mathscr{R}}$ has genus zero, then $K_{\hat{\mathscr{R}}}$ is zero. For the same reason $\mathbf{S}= 0$ (indeed, $\mathcal{A}(\hat{\mathscr{R}})$ consists only of the one-form which is identically zero. 
\end{remark}
 
For any operator $\mathbf{M}$, we define the complex conjugate operator by 
 \[  \overline{\mathbf{M}} \overline{\alpha} = \overline{\mathbf{M} \alpha}.  \]
 So for example 
 \[   \overline{\mathbf{T}}_{\riem}:\mathcal{A}(\Omega) \rightarrow \overline{\mathcal{A}(\riem)}. \] 
 
 The restriction operator is conformally invariant by conformal invariance of Bergman space of one-forms. 
 By the conformal invariance of the kernel functions defined above, the operators $\mathbf{T}$ and $\mathbf{S}$ are also conformally invariant. The same holds for the conjugate operators. 

 For some historical context of these operators see \cite{Schippers_Staubach_scattering_III}. 

\end{subsection}
\begin{subsection}{Faber-Tietz forms} \label{se:Faber-Tietz}
 In this section, we show that the field of one-forms obtained from Theorem \ref{th:conformal_invariant_from_field} can be written in terms of certain one-forms which we call Faber-Tietz forms \cite{SchiShi_Faber}, in the case that there are no free boundaries.

 Now assume that none of the boundary curves are free, and we are given data $\beta_l$ as in Theorem \ref{th:conformal_invariant_from_field}. 
 \begin{lemma} \label{le:remove_simple_poles} Fix imaginary numbers $d_1,\ldots,d_n$ such  that $d_1+\ldots+d_n=0$. There is a holomorphic one-form   $\delta$ on $\hat{\mathscr{R}}$ with simple poles at $p_1,\ldots,p_n$ with residues $d_1,\ldots,d_n$ respectively, whose real part has zero periods around the handles.  
 \end{lemma}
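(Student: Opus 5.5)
The plan is to combine a third-kind abelian differential with a holomorphic correction that kills the real parts of the periods around the handles. Throughout, $\hat{\mathscr{R}}$ is the compact surface of genus $g$ obtained by filling in the punctures.

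\emph{Step 1: construct a third-kind differential.} By the classical existence theorem for abelian differentials of the third kind (Riemann--Roch), for each $j=2,\ldots,n$ there is a meromorphic one-form $\omega_j$ on $\hat{\mathscr{R}}$. It is holomorphic except for simple poles at $p_1$ and $p_j$, with residues $-1$ and $+1$ respectively. Set
\[ \delta_0 = \sum_{j=2}^n d_j\, \omega_j . \]
This form has simple poles at $p_2,\ldots,p_n$ with residues $d_2,\ldots,d_n$. At $p_1$ its residue is $-(d_2+\cdots+d_n)=d_1$, which is where the hypothesis $d_1+\cdots+d_n=0$ is used. Since the $d_j$ are imaginary, the residues are imaginary. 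Hence the period of $\mathrm{Re}(\delta_0)$ around each small loop about $p_j$ equals $\mathrm{Re}(2\pi i d_j)=0$.

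\emph{Step 2: correct the periods around the handles.} Fix a canonical homology basis $a_1,\ldots,a_g,b_1,\ldots,b_g$ of $\hat{\mathscr{R}}$, realized by curves avoiding the punctures. Periods in $\hat{\mathscr{R}}\setminus\{p_1,\ldots,p_n\}$ are then well defined up to the puncture loops, on which $\mathrm{Re}(\delta_0)$ already has zero period. The goal is a holomorphic one-form $\eta\in\mathcal{A}(\hat{\mathscr{R}})$ such that
\[ \mathrm{Re}\int_{c}\eta = -\mathrm{Re}\int_c \delta_0 \quad \text{for all } c\in\{a_1,\ldots,a_g,b_1,\ldots,b_g\}. \]
Then $\delta=\delta_0+\eta$ has the required poles and residues, and $\mathrm{Re}(\delta)$ has zero periods around all handles.

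\emph{Step 3: show the real-period map is surjective.} Step 2 requires that the real-linear map $\mathcal{A}(\hat{\mathscr{R}})\to\mathbb{R}^{2g}$, $\eta\mapsto(\mathrm{Re}\int_{a_i}\eta,\mathrm{Re}\int_{b_i}\eta)$, be surjective. Both sides have real dimension $2g$, so it suffices to prove injectivity. Suppose $\mathrm{Re}(\eta)$ has all periods zero. Then $\mathrm{Re}(\eta)=du$ for a global harmonic function $u$ on the compact surface $\hat{\mathscr{R}}$. By the maximum principle $u$ is constant, so $\mathrm{Re}(\eta)=0$. Since $\mathrm{Re}(\eta)$ determines $\eta$ through $\eta=\mathrm{Re}(\eta)+i\ast\mathrm{Re}(\eta)$, we get $\eta=0$. (Equivalently, one can cite the standard fact that real parts of holomorphic differentials realize all real cohomology, i.e.\ the Hodge decomposition.) When $g=0$ this step is vacuous.

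This last step, surjectivity of the real-period map, is the only point where anything beyond bookkeeping is needed, and the maximum-principle argument above handles it cleanly. The remaining care is purely notational: checking that the handle periods do not depend on the choice of representatives once the puncture periods of the real part vanish. That holds because homologous representatives differ by sums of puncture loops.
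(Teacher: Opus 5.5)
Your construction is the paper's: the same third-kind differentials, followed by subtracting a holomorphic form whose real periods on the handle cycles are prescribed. The paper gets that form by citing Farkas--Kra for a real harmonic $\epsilon$ with given periods and then taking $\epsilon+i\ast\epsilon$. You instead prove that the real-period map $\mathcal{A}(\hat{\mathscr{R}})\to\mathbb{R}^{2g}$ is surjective, using injectivity (maximum principle) together with $\dim_{\mathbb{R}}\mathcal{A}(\hat{\mathscr{R}})=2g$. That is the same fact, and your proof of it is correct.

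However, Step 1 contains an arithmetic error that you rely on later. If $d_j=it_j$ with $t_j\in\mathbb{R}$, then $2\pi i d_j=-2\pi t_j$ is \emph{real}. So the period of $\mathrm{Re}(\delta_0)$ around a small loop about $p_j$ is $-2\pi t_j$, which is generally nonzero; $\mathrm{Re}(2\pi i d_j)=0$ holds exactly when $d_j$ is real.

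Under the stated hypothesis, your remark in Step 2 and your closing paragraph are therefore false. Moving $a_i$ across $p_j$ changes $\int_{a_i}\mathrm{Re}(\delta)$ by $\pm 2\pi t_j$, so the handle periods depend on the chosen representatives.

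This does not break the construction, because Step 3 never uses the puncture periods. Fix the curves $a_1,\ldots,b_g$ in $\hat{\mathscr{R}}\setminus\{p_1,\ldots,p_n\}$ once and for all. Your $\delta$ then has real part with zero periods on those curves, which is all the lemma can mean for imaginary residues. The paper's own proof is silent on this point.

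Suppose instead you want independence of representatives, and also exactness of $\mathrm{Re}(\delta)$ on $\Sigma$. The paper's subsequent reduction $\alpha_{\Sigma}=\delta+\alpha_{\Sigma}'$ needs the latter, because the boundary curves are homotopic to the puncture loops. Then the residues must be real, as the definition of section data requires. With that hypothesis your Step 1 claim is correct as written.
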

 \begin{proof}  A classical existence theorem guarantees 
  the existence of meromorphic one-forms $\delta_l$ with simple poles of residue $-1$ at $p_1$ and residue $1$ at $p_l$, and no other poles \cite[p 51]{Farkas_Kra}. Thus we may find a holomorphic one-form $\delta'$ with the specified poles. There is also a real harmonic one-form $\epsilon$ say on $\hat{\mathscr{R}}$ whose real part has periods are equal to the real parts of the periods of $\delta'$ \cite[p 64]{Farkas_Kra}. Setting $\gamma = \epsilon + i \ast \epsilon$ to be the analytic completion of $\epsilon$, we then have that $\delta=\delta' - \gamma$ has the desired properties. 
 \end{proof}

 Now given data $\beta_1,\ldots,\beta_n$ as in Theorem \ref{th:conformal_invariant_from_field}, with residues $d_1,\ldots,d_n$ respectively, let $\delta$ be the holomorphic one-form in Lemma \ref{le:remove_simple_poles}. Then $\beta_l' = \beta_l -f_l^* \delta$ has zero residue. If we find $\alpha_{\riem}'$ corresponding to the data $\beta_l'$, then $\alpha_{\riem}= \delta + \alpha_{\riem}'$ corresponds to the data $\beta_1,\ldots,\beta_n$.  Thus, we have reduced the problem to finding $\alpha_{\riem}$ when the data $\beta_1,\ldots,\beta_n$ has only poles of zero residue.  

 This problem is solved with the use of the Faber-Tietz forms, which we now define. Let $(\mathscr{R},f,\Omega)$ be a rigged Riemann surface. 
  Let $e^m_l \in \overline{\mathcal{A}(\disk)} \oplus  \cdots \oplus \overline{\mathcal{A}(\disk)}$ (where there are $n$ copies of $\overline{\mathcal{A}(\disk)}$ on the right hand side) be the one-form
 \[  \left( 0,\ldots, 0, m\, \overline{z}^{m-1} d\, \bar{z}, 0, \ldots,0 \right) \]
 where the only non-zero entry is in the $l$-th component of the direct sum. 
 \begin{definition}[Faber-Tietz forms]  Let $m \geq 1$ be an integer and $l \in \{1,\ldots,n \}$. The $m,l$-th Faber-Tietz form with singularity at $p_l$ is 
 \[  \alpha^m_l = \mathbf{T}_{\riem} (f^{-1})^* (e^m_l).  \]
 \end{definition}
 Here, the pull-back $(f^{-1})^* e$ of $e \in \overline{\mathcal{A}(\disk)} \oplus  \cdots \oplus \overline{\mathcal{A}(\disk)}$ refers to the one-form in $\overline{\mathcal{A}(\Omega)}$ whose restriction to $\Omega_k$ is $(f_l^{-1})^* e_l$ where $e_l$ denotes the $l$-th component of $e$.  
 
 By properties of $\mathbf{T}$ (see \cite[Theorem 3.2]{SchiShi_Faber}, $\alpha^m_l$ is in $\mathcal{A}(\riem)$, and  it extends to a meromorphic differential on $\mathscr{R}$ with poles only at the points $p_1,\ldots,p_n$ \cite[Theorem 3.12]{SchiShi_Faber}.   
 
 \begin{theorem} \label{th:Faber-Tietz_have_specified_pole} For any integer $m \geq 1$ and $l \in \{1,\ldots,n \}$, 
  the Faber-Tietz form $\alpha_k^l$ extends holomorphically to $\mathscr{R} \backslash \{p_l\}$.  Furthermore, $\alpha^m_l$ has a pole of order $m+1$ at $p_l$; in coordinates defined by $f_k$, the principal part at $p_l$ is given by 
  \[  f_l^* \alpha^m_l = 
  \left(\frac{m}{\zeta^{m+1}} + a(\zeta )\right) d\zeta \]
  where $a(\zeta)$ is holomorphic at $0$.  
 \end{theorem}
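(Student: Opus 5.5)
The plan is to compute $\mathbf{T}_{\riem}(f^{-1})^*(e^m_l)$ explicitly by moving the area integral over $\Omega_l$ to a boundary integral with Stokes' theorem, and then to read off the singularity. The pulled-back form is supported only on $\Omega_l$. Pulling back by $f_l$ and using the conformal invariance of the kernel $L_{\hat{\mathscr{R}}}$, we get for $z \in \riem$
\[ \alpha^m_l(z) = \iint_{\disk} L_{\hat{\mathscr{R}}}(z,f_l(\zeta))\, f_l'(\zeta)\wedge m\,\overline{\zeta}^{m-1}\,d\overline{\zeta} . \]
Here $L(z,f_l(\zeta)) f_l'(\zeta)\,d\zeta$ denotes the pull-back in the second variable, and it is holomorphic in $\zeta\in\disk$ because $z\notin\Omega_l$. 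We write $m\overline{\zeta}^{m-1}d\overline{\zeta}=d(\overline{\zeta}^m)$ and apply Stokes' theorem on $\disk$ to the holomorphic form. The integral then becomes, up to sign, $\int_{|\zeta|=1}\overline{\zeta}^m L(z,f_l(\zeta))f_l'(\zeta)\,d\zeta$. For rigged maps $f_l$ extends across $\mathbb{S}^1$ only quasiconformally, so we first do the computation on $|\zeta|=r<1$ and let $r\to1$. On the circle of radius $r$ we have $\overline{\zeta}^m=r^{2m}\zeta^{-m}$. This turns the expression into a residue-type contour integral
\[ \alpha^m_l(z)= c\, r^{2m}\int_{|\zeta|=r} \zeta^{-m} L(z,f_l(\zeta))\,df_l(\zeta) \]
plus the area integral over $r<|\zeta|<1$, which tends to $0$. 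Here $c$ is a normalizing constant, which should be $-\pi$ times $1/(\pi i)$ times the orientation factor.

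Next we extend the result to all of $\mathscr{R}\setminus\{p_l\}$. The contour formula $z\mapsto \int_{|\zeta|=r}\zeta^{-m}L(z,f_l(\zeta))df_l(\zeta)$ defines a holomorphic form in $z$ off the curve $f_l(|\zeta|=r)$. In particular it is holomorphic on the entire exterior of $f_l(\disk_r)$, which contains all the other caps $\Omega_j$ for $j\neq l$. So there are no poles at $p_j$ for $j\ne l$, which is the first claim; compare also \cite[Theorem 3.12]{SchiShi_Faber}. For $z$ inside $f_l(\disk_r)$ we apply the residue theorem in $\zeta$. Take $z=f_l(w)$ with $|w|<r$. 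The integrand, viewed as a function of $\zeta$, has poles only at $\zeta=0$ (from $\zeta^{-m}$) and at $\zeta=w$ (from the double pole of $L$ on the diagonal). Deforming the contour shows that the exterior definition continues to $f_l(\disk_r\setminus\{0\})$ as the difference between a term holomorphic at $w=0$ and the residue at $\zeta=w$.

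The main computation is the residue at $\zeta=w$. In the coordinate $f_l$, the local expansion is $L(f_l(w),f_l(\zeta))\,f_l'(w)f_l'(\zeta) = \frac{1}{\pi i}\cdot\frac{-1}{2}\cdot\frac{1}{(w-\zeta)^2}+\text{holomorphic}$, with the exact constant fixed by the normalization of $\mathscr{G}$. Using this expansion, the residue of $\zeta^{-m}$ times the double pole is a constant multiple of $\frac{d}{d\zeta}\zeta^{-m}\big|_{\zeta=w}=-m w^{-m-1}$. The constants from $\mathbf{T}$, from Stokes' theorem and from $L$ should combine to give exactly $1$, so that $f_l^*\alpha^m_l = \left(\frac{m}{w^{m+1}}+a(w)\right)dw$. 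The remaining part is the integral over $|\zeta|=r$ with $w$ inside, and it is holomorphic at $w=0$. This also shows that the singularity is a pole of order exactly $m+1$. The hardest step will be the bookkeeping of signs and constants: orientation, the $1/(\pi i)$ factor, and the $-\log$ normalization giving $-\tfrac12 (w-\zeta)^{-2}$ for $\partial_z\partial_w$. I would first check all of these in the model case $\hat{\mathscr{R}}=\sphere$ with $f_l=\mathrm{id}$, where $L=\frac{1}{2\pi i}\frac{dz\,dw}{(z-w)^2}$ up to sign, and adjust the sign conventions there. Justifying the limit $r\to1$ uses only the $L^2$ bound on the annulus and the local uniform convergence of holomorphic forms in $z$.
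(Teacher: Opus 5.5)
The paper does not prove this statement itself. It is quoted from Schippers--Shirazi \cite{SchiShi_Faber}; the sentence just before the theorem cites Theorems 3.2 and 3.12 there, where Faber--Tietz forms are treated within the Faber-operator/Schiffer-operator theory. Your proposal is a correct, self-contained alternative.

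\begin{itemize}
\item Applying Stokes' theorem on circles $|\zeta|=r<1$ means you never need boundary values or Cauchy integrals on the quasicircle $\partial\Omega_l$.
\item Shrinking the contour gives the holomorphic extension to $\hat{\mathscr{R}}\setminus\{p_l\}$. This includes holomorphy at $p_j$ for $j\neq l$, which is stronger than the cited fact that the poles lie among $p_1,\ldots,p_n$.
\item The principal part depends only on the coordinate-invariant leading term of the diagonal singularity of $L_{\hat{\mathscr{R}}}$, and $a(\zeta)$ comes out as an explicit contour integral.
\end{itemize}
What the citation buys instead is the placement of these forms inside the Faber-series framework that the surrounding section relies on.

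There are two points to tighten.

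\textbf{Independence of $r$.} Write the pull-back in the second variable as $\ell(z,\zeta)\,d\zeta$. State explicitly that for $z\in\riem$ the integral $I_r(z)=\int_{|\zeta|=r}\zeta^{-m}\ell(z,\zeta)\,d\zeta$ does not depend on $r$, by Cauchy's theorem. This is what removes the factor $r^{2m}$ when you let $r\to 1$. It also justifies using $I_r$ itself, not $r^{2m}I_r$, as the extension for each fixed $r$.

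\textbf{The constant $c$.} Your guess $c=\pm i$ is wrong. $L_{\hat{\mathscr{R}}}$ already contains the factor $1/(\pi i)$, and
\[ d\bigl(\overline{\zeta}^{m}\,\ell\,d\zeta\bigr) = -\,\ell\,d\zeta\wedge m\,\overline{\zeta}^{m-1}\,d\overline{\zeta}. \]
Hence $\alpha^m_l=-I_r$ on $\riem$ with counterclockwise orientation, so $c=-1$.

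With the paper's normalization of $\mathscr{G}$, in the coordinates $f_l(w)$ and $f_l(\zeta)$ the kernel is
\[ \bigl(-\tfrac{1}{2\pi i}(w-\zeta)^{-2}+H(w,\zeta)\bigr)\,dw\,d\zeta, \]
with $H$ holomorphic. So for $|w|<r$ the extension is
\[ -I_r(f_l(w))+2\pi i\,\mathrm{Res}_{\zeta=w}\bigl[\zeta^{-m}\ell(f_l(w),\zeta)\bigr]. \]
The coefficient of $w^{-m-1}\,dw$ is therefore $2\pi i\cdot\bigl(-\tfrac{1}{2\pi i}\bigr)\cdot(-m)=m$, exactly as stated. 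In the model case $\hat{\mathscr{R}}=\sphere$, $f_l=\mathrm{id}$, this gives $\alpha^m_l=m\,z^{-m-1}\,dz$. So your plan to fix the constants by the local computation works; only the tentative value of $c$ needs correcting.
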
 

 This motivates the following transform. Given 
 \begin{equation}\label{ beta form}
  \beta_l = \sum_{m=1}^N \frac{b_m}{\zeta^{m+1}} \, d\zeta,   
 \end{equation}      
 we define 
 \begin{equation}\label{transform of beta form}
  \hat{\beta}_l= \sum_{m=1}^N m {b_m} \overline{\zeta}^{m-1}\, d\overline{\zeta}.
  \end{equation}
 Given data $\beta_1,\ldots,\beta_n$, denoting $\disk^n = \disk \sqcup \cdots \sqcup \disk$ we define 
 \[ \hat{\beta} \in \overline{\mathcal{A}(\disk^n)} \]
 by 
 \begin{equation}\label{defn beta}
   \left. \hat{\beta} \right|_{\Omega_l} = \hat{\beta}_l.  
 \end{equation} 
 We can also think of $\mathcal{A}(\disk^n) = \mathcal{A}(\disk) \oplus \cdots \oplus \mathcal{A}(\disk)$.
 \begin{remark}
     $\hat{\beta}_l$ is the pull-back of $\beta_l$ under $\zeta \mapsto 1/\overline{\zeta}$.  
 \end{remark}

 We have the following result expressing $\alpha_{\riem}$ in (\ref{th:Faber-Tietz_have_specified_pole}) in terms of the Schiffer operator. 
  \begin{theorem} \label{th:field_characterization_Faber}
    Let $\beta_1,\ldots,\beta_n$ satisfy the conditions of {\textnormal{Theorem \ref{th:conformal_invariant_from_field}}} with $k=n$.  Assume that $\beta_l$ have zero residue for $l=1,\ldots,n$. Then the one-form $\alpha_{\riem}$ obtained from Theorem \ref{th:conformal_invariant_from_field} is explicitly given by 
    \[  \alpha_{\riem} = \mathbf{T}_{\riem} (f^{-1})^*\hat{\beta} + \mathbf{S} (f^{-1})^* \overline{\hat{\beta}}.    \]
    If $\mathscr{R}$ is of genus zero, then 
    \[  \alpha_{\riem} = \mathbf{T}_{\riem} (f^{-1})^*\hat{\beta}. \]
  \end{theorem}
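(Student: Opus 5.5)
The plan is to use the uniqueness part of Theorem \ref{th:section_data} with $k=n$. With no free boundaries, conditions (3) and (4) are vacuous. The unique $\alpha_{\riem}$ is then characterized by three properties: its real part is exact, it extends holomorphically to ${\riem}_n=\mathscr{R}$, and the principal parts of $f_l^*\alpha_{\riem}$ at $0$ agree with those of $\beta_l$ for every $l$. So it suffices to show that
\[ \alpha := \mathbf{T}_{\riem} (f^{-1})^*\hat{\beta} + \mathbf{S} (f^{-1})^* \overline{\hat{\beta}} \]
has these three properties. The genus zero statement then follows at once from Remark \ref{re:S_vanishes_genus_zero}.

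\textbf{Step 1: extension and principal parts.} Write $\beta_l=\sum_{m} b^l_m \zeta^{-m-1}d\zeta$; the residue is zero, so the sum starts at $m=1$. By definition of the transform, $\hat{\beta}_l = \sum_m b^l_m\, m\overline{\zeta}^{m-1}d\overline{\zeta}$. Hence $\hat\beta$ decomposes, complex-linearly in the $b^l_m$, into the one-forms $e^m_l$, and we get
\[ \mathbf{T}_{\riem}(f^{-1})^*\hat{\beta} = \sum_{l,m} b^l_m \alpha^m_l. \]
By Theorem \ref{th:Faber-Tietz_have_specified_pole}, this extends meromorphically to $\mathscr{R}$, with principal part of $f_l^*(\cdot)$ at $0$ equal to $\sum_m b^l_m \zeta^{-m-1}d\zeta$, which is exactly the principal part of $\beta_l$. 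The second term $\mathbf{S}(f^{-1})^*\overline{\hat\beta}$ lies in $\mathcal{A}(\hat{\mathscr{R}})$, so it is holomorphic on all of $\hat{\mathscr{R}}$ and does not change the principal parts. Thus $\alpha$ extends holomorphically to $\mathscr{R}$ and conditions (1) and (2) hold.

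\textbf{Step 2: exactness of $\mathrm{Re}\,\alpha$.} This is the main obstacle. Boundary periods cause no trouble: every pole has zero residue, so periods around curves homotopic to $\partial_l\riem$ vanish. What remains is to show that $\mathrm{Re}\,\alpha$ has zero periods around the handles of $\hat{\mathscr{R}}$. The $\mathbf{S}$ term is designed to cancel the handle periods of $\mathrm{Re}\,\mathbf{T}_{\riem}(\cdot)$.

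To see this, I would compute, for a harmonic one-form $\omega$ on $\hat{\mathscr{R}}$ (real basis dual to the handle cycles), the pairing $\iint_{\riem}\omega\wedge\mathrm{Re}\,\alpha$ or the periods directly. The tool is the standard decomposition of $L_{\hat{\mathscr R}}$ on a higher genus surface: $\partial_w\mathscr{G}$ has periods in $z$ governed by the holomorphic differentials, and the $z$-periods of $L(z,w)$ as a function of $w$ are expressible through the Bergman kernel, giving $\int_{c}\mathbf{T}_{\riem}\overline{\gamma} = -\overline{\int_c \mathbf{S}\gamma}$-type identities for handle cycles $c$ in $\riem$. Equivalently, $\mathbf{T}\overline{\gamma}+\mathbf{S}\gamma$ has the form $\partial$ of a real-valued function's holomorphic part up to an exact term, i.e.
\[ \mathbf{T}_{\riem}\overline{\gamma} + \overline{\mathbf{S}}\,\overline{\gamma} \]
is $d$ of a single-valued integral of $\partial_w \mathscr{G}$ against $\gamma$. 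Applied with $\gamma=(f^{-1})^*\hat\beta$, this shows that $\mathrm{Re}\,\alpha$ equals $d$ of the single-valued real function
\[ w\mapsto \mathrm{Re}\,\frac{1}{\pi i}\iint_\Omega \partial_z \mathscr{G}(w,w_0;z,z_0)\wedge \overline{\gamma(z)}. \]
The single-valuedness comes from $\mathscr{G}$ being single-valued on $\hat{\mathscr{R}}$; this is exactly how the Bergman-kernel correction arises from the $\overline{\partial}_w$ part of $d_w\mathscr{G}$. I would verify this by differentiating under the integral in $w$ and splitting $d_w=\partial_w+\overline{\partial}_w$.

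Once exactness is established, uniqueness in Theorem \ref{th:section_data} gives $\alpha=\alpha_{\riem}$.
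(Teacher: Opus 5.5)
Your strategy is the paper's. You identify $\alpha_{\riem}$ through the uniqueness in Theorem \ref{th:section_data} with $k=n$, read off the poles from the Faber--Tietz forms, and get exactness of the real part from the $\mathbf{S}$-correction. The paper simply cites \cite{Schippers_Staubach_scattering_III} for exactness of $\mathbf{T}_{\riem}\overline{\gamma}+\overline{\mathbf{S}}\,\overline{\gamma}$. Your Step 2 is a sound sketch of that fact: $\partial_w$ and $\overline{\partial}_w$ of your single-valued potential give $\mathbf{T}_{\riem}\overline{\gamma}$ and $\overline{\mathbf{S}\gamma}$, by the symmetries of $L_{\hat{\mathscr{R}}}$ and $K_{\hat{\mathscr{R}}}$, and this sum has the same real part as $\alpha$. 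Two small corrections there: the input is $\overline{\gamma}=(f^{-1})^*\hat\beta$, not $\gamma$; and the auxiliary point $w_0$ must be chosen off $\overline{\Omega}$, since $\mathscr{G}$ is singular in $z$ at $w_0$. Your genus-zero reduction via Remark \ref{re:S_vanishes_genus_zero} is fine.

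There is, however, a concrete error in Step 1. You correctly get $\hat\beta_l=\sum_m b^l_m e^m_l$, and hence $\mathbf{T}_{\riem}(f^{-1})^*\hat\beta=\sum_{l,m} b^l_m\alpha^m_l$. But Theorem \ref{th:Faber-Tietz_have_specified_pole} says the principal part of $f_l^*\alpha^m_l$ is $m\,\zeta^{-m-1}d\zeta$, not $\zeta^{-m-1}d\zeta$. So the principal part of $f_l^*$ of your candidate is $\sum_m m\,b^l_m\zeta^{-m-1}d\zeta$, which differs from that of $\beta_l$ whenever $\beta_l$ has a pole of order at least three. Uniqueness then identifies your candidate with the form attached to different data.

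The factor $m$ is counted twice: once in $e^m_l$ and once in the transform defining $\hat\beta_l$. The Faber--Tietz normalization is the correct one: for $\Omega=\disk\subset\sphere$ and $f=\mathrm{id}$, one computes $\mathbf{T}_{\riem}(m\overline{w}^{m-1}d\overline{w})=m z^{-m-1}dz$ on $|z|>1$. So with the transform as literally defined, the displayed formula is off by this normalization. The intended identity, and your argument verbatim, hold once you take $\hat\beta_l=\sum_m b_m\overline{\zeta}^{m-1}d\overline{\zeta}=\sum_m \frac{b_m}{m}e^m_l$. The paper's own proof never checks principal parts and passes over the same point, but you should track the constant rather than assert the match.
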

  \begin{proof}
   It follows from Theorem \ref{th:Faber-Tietz_have_specified_pole} that $\alpha_{\riem}$ extends holomorphically to $\mathscr{R}$. It was shown in \cite[Theorem 4.7]{Schippers_Staubach_scattering_III} that $\mathbf{T}_{\riem} \hat{\beta} + \overline{\mathbf{S}} \hat{\beta}$ is exact, which proves the claim. It was shown in general that elements of the image of $\mathbf{T}_{\riem}$ have zero boundary periods, which proves the claim in genus zero. 
  \end{proof}
  \begin{remark}
      This theorem also expresses $\alpha_{\riem}$ as a sum of Faber-Tietz forms up to the holomorphic correction term $\mathbf{S}_{\Omega}$. 
  \end{remark}
\end{subsection}
\end{section}
\begin{section}{Examples} \label{se:examples}
\begin{subsection}{Doubly-connected domains} 
 We will consider surfaces $\riem$ with no handles and two boundary curves homeomorphic to $\mathbb{S}^1$.  The first three examples have data with only simple poles, and deal with the cases that there are zero, one, or two free boundaries.  The case of one free boundary relates to the Schwarz lemma, and the case of two free boundaries relates to the module of the annulus. 
 
 In the examples below, we adopt the following numbering convention for the boundaries. After sewing on caps, one obtains a compact Riemann surface of genus zero with two punctures, which thus can be identified with the twice punctured sphere. One of the domains $\Omega_k$ will be unbounded; we choose it to be $\Omega_2$.

 \begin{example}[no free boundaries]  \label{ex_cylindrical_metric_no_free}
  Let $\riem$ be a doubly-connected surface. 
  Let 
  \[  \beta_1 = \frac{dz}{z}  \ \ \ \mathrm{and} \ \ \ 
   \beta_2 = - \frac{dz}{z}.             \]
  One can check that these satisfy the conditions of 
  Theorem \ref{th:conformal_invariant_from_field}. Condition (4) is void, because no constants $c_l$ are required, since no boundary is free.  

  After sewing on caps, we normalize so that $p_1=0$ and $p_2=\infty$, and then $\riem$ is a doubly-connected domain in the sphere, with complements $\Omega_1,\Omega_2$ containing $0$ and $\infty$ respectively. We also have conformal maps $f_l:\disk \rightarrow \Omega_l$ for $l=1,2$.
  Since $\alpha_{\riem}$ is uniquely specified by its pole at $\infty$ and $0$, we obtain
  \[  \alpha_{\riem} = \frac{dz}{z} \]
  since this form has the correct residues.

  Denote $h_l=\left. h^\dagger \right|_{\Omega_l}$ for $l=1,2$.
  We then obtain the following values for the functions $q$, $h_1$, and $h_2$:
  \begin{align*}
   h & = \log{|z|} \\
   h_1 & = \log{|f_1^{-1}(z)|} \\
   h_2 & = - \log{|f_2^{-1}(z)|}.
  \end{align*}  
  In the expression for $h_2$, observe that $f_2^{-1}$ has 
  Laurent expansion
  \begin{equation} \label{eq:f_2_inverse_temp}
   f_2^{-1}(z) = c_1 z^{-1} + c_2 z^{-2} + \cdots
  \end{equation}
  at $\infty$ so that $h_2-h$ is non-singular. 
  
  We then compute using (\ref{eq:contour_integral_analytic})
  \begin{align*}
   \mathfrak{m}_{\beta,c}(\riem,\Omega_1,\Omega_2,f_1,f_2) & = \mathrm{Re} \left( \frac{1}{i} \int_{\partial \Omega_1} (H_1 - H) \frac{\partial H}{\partial z} \,dz \right) + \mathrm{Re} \left( \frac{1}{i} \int_{\partial \Omega_2} (H_2 - H) \frac{\partial H}{\partial z} \,dz \right) \\
   & = \mathrm{Re} \left( \frac{1}{i} \int_{\partial \Omega_1} \log\Big({\frac{f_1^{-1}(z)}{z}}\Big)\, \frac{dz}{z}  \right) - \mathrm{Re} \left( \frac{1}{i} \int_{\partial \Omega_2} \log\Big({z{f_2^{-1}(z)}}\Big)\, \frac{dz}{z}  \right) \\
   & = \mathrm{Re} \left( \frac{1}{i} \int_{\partial \Omega_1} \log\Big({\frac{f_1^{-1}(z)}{z}}\Big)\, \frac{dz}{z}  \right) + \mathrm{Re} \left( \frac{1}{i} \int_{|z|=R}  \log\Big({z{f_2^{-1}(z)}}\Big)\, \frac{dz}{z} \right)
  \end{align*}
  for $R$ sufficiently large, where the last integral is taken counter-clockwise around the origin.  
   Note that $\partial \Omega_2$ is oriented positively with respect to $\infty$, which accounts for the sign change in the final term. Thus we obtain 
  \begin{align*}
   \mathfrak{m}_{\beta,c}(\riem,\Omega_1,\Omega_2,f_1,f_2) & = 2 \pi \left( - \log|f_1'(0)| + \log|f_2'(0)| \right). 
  \end{align*}
  Here by $f_2'(0)$ we mean the coefficient $a_{-1}$ in the Laurent expansion
  \[ f_2(z)= a_{-1}/z + a_0 + a_1 z + \cdots \]
  at $\infty$ (note that $a_{-1}$ equals the coefficient $1/c_1$ in (\ref{eq:f_2_inverse_temp}).  

  The monoticity of this quantity is a manifestation of the Schwarz Lemma. The quantity $\mathfrak{m}_{\beta,c}(\riem,\Omega_1,\Omega_2,f_1,f_2)$ has no upper bound. By Theorem \ref{th:equality} it is zero precisely in the case that the domains map onto disks centred on $0$ and $\infty$ which are bounded by the same circle, since circles centred at $0$ are the curves such that $\mathrm{Re}(dz/z)=0$.  It can also be explicitly checked that $m=0$ in this case.    
 \end{example}
 
 In the next example, we choose one boundary to be free. Thus ${\riem}_1$ is biholomorphic to a punctured disk. We allow the position of the puncture to vary. 
 \begin{example}[one free boundary; hyperbolic reduced module] \label{ex:hyp_reduced_module}
  Let ${\riem}$ be a doubly-connected Riemann surface. We will allow the second boundary to be free. Thus ${\riem}_1$ is conformally equivalent to a once-punctured disk, so we set ${\riem}_1 = \disk \backslash \{a\}$ for some $a \in \disk$, and we have a single conformal map $f=f_1:\disk \rightarrow \Omega_1=\Omega$ such that $f(0)=a$, and we can ignore the conformal map $f_2$. 

  Again choose $\beta_1 = \beta = dz/z$. There is only one constant $c_2$, which must be $-2\pi i$ since $\mathrm{Res}( dz/z;0)= 1$. 
  
  One may check that 
  \[  \alpha_{\riem} = \frac{(1-|a|^2)}{(1-\bar{a}z)(z-a)} \,dz  \]
  satisfies properties (1)-(4) of Theorem \ref{th:conformal_invariant_from_field}.  With these choices we get 
  \begin{align*}
   x(z) & = \log{ \frac{z-a}{1-\bar{a}z}} \\
   x_1(z) & = \log{f^{-1}(z)}.
  \end{align*}
  It is then computed using (\ref{eq:contour_integral_analytic}) that
  \begin{align*}
   \mathfrak{m}_{\beta,s}(\disk,\Omega,f)   
   & = \mathrm{Re} \left( \frac{1}{i} \int_{\partial \Omega} \log{\left(  (1-\bar{a}z) \frac{f^{-1}(z)}{z-a} \right)} \frac{dz}{z} \right) \\
   & = 2 \pi \log{\frac{(1-|a|^2)}{|f'(0)|}  }   
  \end{align*}
  (see \cite[Example 2.1]{Minda_confinv}). 
  The hyperbolic metrics of $\disk$ and $\Omega$ are 
  \[   \lambda_{\disk}(z)^2 |dz|^2 = \frac{|dz|^2}{(1-|z|^2)^2} \ \ \ \ \ \ \  \lambda_{\Omega}(z)^2 |dz|^2 = \frac{|F'(z)|^2}{(1-|F(z)|^2)^2  }  |dz|^2    \]
  where $F=f^{-1}$, then we may write
  \[    \mathfrak{m}_{\beta,c}(\disk,\Omega,f)  = 2 \pi \log{\frac{\lambda_{\Omega}(a)}{\lambda_{\disk}(a)}}.     \]
  Replacing $\disk$ with ${\riem}_1$ we get a manifestly conformally invariant expression. 
  Observe that the ratio is independent of the curvature normalization of the hyperbolic metric.

  Positivity of this quantity is a special case of the Schwarz-Pick lemma, and equality holds if and only if $f$ maps onto $\disk$. 

  We also have a monotonicity property. If we fix a normalization that $\mathscr{R}=\sphere$ with punctures at $1$ and $1/\overline{a}$, we see that $\alpha_{\riem}$ etends to $\sphere \backslash \{a, 1/\overline{a} \}$ independently of the domain $\riem$ and the maps $f_1$ and $f_2$. Thus Theorem \ref{th:monotonic_if_conditions_hold} applies, and we see that 
  \[  \mathfrak{m}_{\beta,c}(\disk,\Omega,f_1) \leq \mathfrak{m}_{\beta,c}(\disk,\Omega',f_1')   \]
  whenever $\Omega' \subset \Omega$. This can also be obtained from the Schwarz lemma. 
 \end{example}
 \begin{remark} In Example \ref{ex_cylindrical_metric_no_free} the punctures need not have been placed at zero; one would get a more complicated expression for the invariant as in Example \ref{ex:hyp_reduced_module}. 
 \end{remark}

 Finally, we consider the case where both boundaries are free. This must result in a conformal invariant in the classical sense: that is, a function which depends only on the domain itself and not how it nests in another domain. Thus it must be a function of the module of a doubly-connected domain, as we will indeed obtain. 
 \begin{example}[both boundaries free, module of annulus]
  Assuming that both boundaries are free, we have $k=0$ and ${\riem}_0=\riem$.
  The section data for the problem consists only of a pair of constants $c_1,c_2$ summing to zero; we choose $c_1=1$, $c_2=-1$. It can be checked that the unique $\alpha_{\riem}$ obtained from Theorem \ref{th:conformal_invariant_from_field} (up to scale) is 
  \[   \alpha_{\riem} = \frac{1}{2\pi} \frac{dz}{z}.  \]
  To see this we need only check that 
  \[  \int_{\partial \Omega_1} \ast \frac{1}{2 \pi} \frac{dz}{z} = \int_{\partial \Omega_1}  - \frac{i}{2 \pi} \frac{dz}{z} =1 = c_1.   \]
  where it is understood that the integral is taken with positive orientation with respect to the domain $\Omega_1$.   
  We then obtain that the function $h$ in Theorem \ref{th:its_Nehari} is
  \[ h(z) = \frac{1}{2\pi} \log{|z|}. \] 

  In order to compute $\mathfrak{m}_{\beta,c}(\riem)$    We may choose $\riem$ to be a standard annulus
  \[  \riem = \{ z \,: \, r < |z| <R \}.  \]
  Computing using either Theorem \ref{th:its_Nehari} or equation (\ref{eq:contour_integral_analytic}) we obtain 
  \begin{align*}
   \mathfrak{m}_{\beta,c}(\riem) & = \int_{\partial \riem} h \frac{\partial h}{\partial n} \,ds \\
   & =  \int_0^{2\pi} \frac{1}{2\pi} \log{R} d\theta - \int_0^{2\pi} \frac{1}{2\pi} \log{r} d\theta = \log{(R/r)}
  \end{align*}
  where $n$ is the unit outward normal and both integrals were taken with respect to the positive orientation on their respective circles.  By conformal invariance, the module of $\riem$ is in general thus just $\log{R/r}$ where $R$ and $r$ are the outer and inner radii of a conformally equivalent standard annulus. 

  Positivity of the module is trivial.  The conditions of Theorem \ref{th:monotonic_if_conditions_hold} are also met, since we see that $\alpha_{\riem}$ is independent of the domain $\riem$ and the mappings $f_1,f_2$. Thus we obtain that if $\riem' \subset \riem$ then
  \[   \mathfrak{m}_{\beta,c}(\riem') \leq \mathfrak{m}_{\beta,c}(\riem). \]
  
  This is a special case of Gr\"otzsch's lemma; see e.g. \cite[pp 22--23]{Jenkinsbook}, where Gr\"otzsch's lemma is proven using a variant of the length/area method.  Beurling's theorem  \cite[Section 4-9]{Ahlfors_conf_inv} connects the definition of the conformal invariant using extremal length with the definition of the conformal invariant given here in terms of Dirichlet energy. 
 \end{example}

\end{subsection}
\begin{subsection}{Multiply-connected domains in the sphere} 
 We consider multiply-connected planar domains, in particular the case that all boundaries are free and none are free.
 The examples in this section also can be specialized to doubly-connected domains. The first example ahead would reproduce the module of the annulus, while the second is not contained in the list in the previous section. 

 In this section $\riem$ is a Riemann surface with no handles and bordered by $n$ curves homeomorphic to $\mathbb{S}^1$. After sewing on caps, $\riem$ is a domain in the sphere bordered by quasicircles.  
 \begin{example}[all boundaries free, boundary period matrix] \label{ex:boundary_period_genus_zero}
  In this example, we treat all boundaries as free.  Since all boundaries are free, the quantity obtained is invariant under the full modular group, and thus is an invariant in the classical sense. In particular, it is a well-defined function on the Riemann moduli space of multiply-connected domains of fixed connectivity.

  Define the ``boundary period matrix'' as follows. Let $\omega_l$ be the unique harmonic function such that  
  \[ \omega_l(z) = \left\{ \begin{array}{ll} 1 & z \in \partial_l \riem \\ 0 & z \in \partial_m \riem, \ \ l \neq m. \end{array} \right.  \]
  These are sometimes called ``harmonic measures'', e.g. \cite[p 38]{Nehari_book}. 
  The period matrix is 
  \[  \Pi_{lm} = \int_{\partial_l \riem} \ast d\omega_m = \int_{\partial \riem} \omega_l \ast d\omega_m.    \]
  It is a classical fact (proven using Green's identities) that this matrix is symmetric, and if we restrict to $1\leq l,m \leq n-1$, the matrix is positive definite \cite{Nehari_book}. 
  
  Fix now constants $c_1,\ldots,c_n \in \mathbb{R}$ such that $c_1 + \cdots + c_n =0$. We determine the corresponding $\alpha_{\riem}$ as follows. Let $\lambda_l$ be the solution of 
  \[  c_l = \sum_{m=1}^{n-1} \Pi_{l m} \lambda_m    \]
  for $l=1,\ldots,n-1$. Then set 
  \[  \alpha_{\riem} = \sum_{l=1}^{n-1} \lambda_l \partial \omega_l.    \]
  Since 
  \[  \mathrm{Re}(\alpha_{\riem}) =\sum_{l=1}^{n-1} \lambda_l \mathrm{Re}(d\omega)  \]
  and using $\mathrm{Re}( \ast d\omega) = \ast \mathrm{Re}(d\omega)$ we obtain 
  \begin{align*}
   \int_{\partial_l \riem} \ast \mathrm{Re}(\alpha_{\riem}) & = \mathrm{Re} \left( \int_{\partial_l \riem} \sum_{m=1}^{n-1} \lambda_m \ast d\omega_m \right) = \mathrm{Re} \left(  \sum_{m=1}^{n-1}\Pi_{lm} \lambda_m \right) = c_l. 
  \end{align*}
  Furthermore by the fact that $\omega_l$ are constant on each boundary curve, we get that the boundaries are trajectories of $\alpha_{\riem}$.  
  Thus this is the unique $\alpha_{\riem}$ associated to the data $c_1,\ldots,c_n$.  We then compute (observing that $\mathbf{O}\mathrm{Re}(\alpha_{\riem}) =0$) 
  \begin{align*}
   \mathfrak{m}_{\beta,c}(\riem) & = \| \mathrm{Re}(\alpha_{\riem}) \|^2 = \iint_{\riem} dh \wedge \ast dh = \int_{\partial \riem} h \ast dh \\& = 
   \int_{\partial \riem} \left( \sum_{l=1}^{n-1} \lambda_l \omega_l \right) \cdot \left( \sum_{m=1}^{n-1} \lambda_m \ast d\omega_m \right) \\
   & = \sum_{l,m=1}^{n-1} \Pi_{lm} \lambda_l \lambda_m. 
  \end{align*}
  Observe that this is a finite-dimensional family of invariants, one for each choice of $\lambda_1,\ldots,\lambda_{n-1}$. 
 \end{example}
 \begin{example}[no free boundaries, Grunsky inequalities for multiply-connected domains]  \label{ex:Grunsky_genus_zero}
  We let $k=0$. After sewing on caps we obtain that $\mathscr{R}$ is the sphere minus a finite number of punctures $p_1,\ldots,p_n$.  

  Let $\beta_1,\ldots,\beta_n$ be given as in Theorem \ref{th:conformal_invariant_from_field}, where we assume that their residues are all zero.  Let also $\hat{\beta}$ be defined as in \eqref{defn beta}.  Set 
  \[  \overline{\gamma} = (f^{-1})^* \hat{\beta} \in \overline{\mathcal{A}(\Omega)}.     \]  
  By Theorem \ref{th:field_characterization_Faber} we have $\alpha_{\riem} = \mathbf{T}_{\riem} \overline{\gamma}$.

  By \cite[Theorem 3.25]{Schippers_Staubach_scattering_IV}, we have 
  \begin{equation} \label{eq:Grunsky_Bergman_Schiffer_form}
      \| \mathbf{T}_{\riem} \overline{\gamma} \|^2 + \| \mathbf{T}_{\Omega} \overline{\gamma} \|^2 = \| \overline{\gamma} \|^2. 
  \end{equation}
  Furthermore by \cite[Proposition 4.10]{Schippers_Staubach_scattering_III}
  \[  \mathbf{O} \mathbf{T}_{\riem} \overline{\gamma} = - \overline{\gamma} + \mathbf{T}_{\Omega} \overline{\gamma}.   \]
  Putting these together we obtain 
  \begin{align*}
  \mathfrak{m}_{\beta,c}(\riem) & = \| \mathrm{Re} \mathbf{T}_{\riem} \overline{\gamma} \|^2 + \| \mathrm{Re} (-\overline{\gamma} + \mathbf{T}_{\Omega}\overline{\gamma} \|^2 \\ 
  & = \frac{1}{2} \left[ \|  \mathbf{T}_{\riem} \overline{\gamma} \|^2 + \| -\overline{\gamma} + \mathbf{T}_{\Omega} \overline{\gamma} \|^2 \right] \\
  & = \| \overline{\gamma} \|^2 - \mathrm{Re} \left< \gamma,\mathbf{T}_{\Omega} \overline{\gamma} \right>. 
 \end{align*}
 Observe that postivity of $\mathfrak{m}_{\beta,c}$ is an infinite-dimensional family of inequalities, one for each choice of $\overline{\gamma}$, or equivalently $\beta$. These can be written explicitly in terms of the coefficients of the map $f$, where the order of the pole in the principal part of $\beta$ determines the highest order of the Taylor coefficients of $f$ appearing in the expression. It is an advantage of the relative invariants that they can encode higher-order derivatives of the mapping function \cite{Minda_confinv,Sasha_paper}. 

 Positivity of $\mathfrak{m}_{\beta,c}$ is a version of the ``weak Grunsky inequalities'' for multiply-connected domains, while the identity (\ref{eq:Grunsky_Bergman_Schiffer_form}) implies a version of the ``strong Grunsky inequalities'' 
 \[ \| \mathbf{T}_{\Omega} \| \leq 1.  \]
 This operator form of the Grunsky inequalities is goes as far back as Bergman and Schiffer \cite{BergmanSchiffer}, where one can find equation (\ref{eq:Grunsky_Bergman_Schiffer_form}) in the case of one subdomain. These inequalities are often written in terms of the matrix entries derived from coefficients of the Taylor series of the conformal maps. See Duren \cite[Section 4.3]{Durenbook} for the strong and weak forms of the Grunsky inequality in the case of one mapping. The multiply-connected case also has a long history, though it was most often written in terms of coefficients of the Taylor series of the mapping function.
 A full discussion of the history and forms of the Grunsky inequalities would take us far afield; the interested reader is referred to \cite{Durenbook,Pommerenkebook,EMS_survey,Schippers_Staubach_scattering_IV,Shirazi_thesis,Shirazi_Grunsky}. 
 \end{example}

\end{subsection} 
\begin{subsection}{Riemann surfaces of higher genus}
  In the following, let $\riem$ be a surface with $g$ handles and $n$ boundary curves homeomorphic to $\mathbb{S}^1$.  
 \begin{example}[all boundaries free, boundary period matrix] 
   Defining the harmonic measures and period matrix as in Example \ref{ex:boundary_period_genus_zero}, we have  
  \[ \omega_l(z) = \left\{ \begin{array}{ll} 1 & z \in \partial_l \riem \\ 0 & z \in \partial_m \riem, \ \ m \neq l \end{array} \right.  \]
  and 
  \[  \Pi_{lm} = \int_{\partial_l \riem} \ast d\omega_m = \int_{\partial \riem} w_l \ast d\omega_m.    \]

  As in Example \ref{ex:boundary_period_genus_zero}, given $c_1,\ldots,c_n$ satisfying the conditions of Theorem \ref{th:conformal_invariant_from_field} let $\lambda_l$ solve 
   \[  c_l = \sum_{m=1}^{n-1} \Pi_{l m} \lambda_m.    \]
   This is possible because of the well-known fact that $\Pi_{lm}$ is positive definite \cite[Theorem 2.36]{Schippers_Staubach_scattering_I}. 
   We then obtain the conformal invariant 
   \[  \mathfrak{m}_{\beta,c}(\riem) = \sum_{l,m=1}^{n-1} \Pi_{lm} \lambda_l \lambda_m.   \]
 \end{example} 
 \begin{example}[no free boundaries, Grunsky inequalities]
  We extend Example \ref{ex:Grunsky_genus_zero} to general genus; this adds an interaction term to the invariant coming from the handles.
  
  Again let $k=0$. After sewing on caps we obtain that $\mathscr{R}$ is a genus $g$ surface minus a finite number of punctures $p_1,\ldots,p_n$.  

  Let $\beta_1,\ldots,\beta_n$ be given as in Theorem \ref{th:conformal_invariant_from_field}. We assume that their residues are all zero. Set 
  \[  \overline{\gamma} = (f^{-1})^* \hat{\beta} \in \overline{\mathcal{A}(\Omega)}.     \]  
  By Theorem \ref{th:field_characterization_Faber} we have 
  \[  \alpha_{\riem} = \mathbf{T}_{\riem} \overline{\gamma} + \mathbf{S} \gamma.  \]

  The overfare of the one-form $\alpha_{\riem}$ is slightly more delicate in higher genus as it requires a cohomological correction. 
  By \cite[Proposition 7.10]{Schippers_Staubach_scattering_arxiv}
  \[  \mathbf{O} \left( \mathbf{T}_{\riem} \overline{\gamma} + \overline{\mathbf{R}}_{\riem} \overline{\mathbf{S}} \overline{\gamma} \right) = - \overline{\gamma} + \mathbf{T}_{\Omega} \overline{\gamma} + \overline{\mathbf{R}}_{\Omega} \overline{\mathbf{S}} \overline{\gamma}.   \]
  Using the fact that $\mathbf{O}$ is real linear 
  \begin{equation} \label{eq:Grunsky_g_example_temp}
    \mathbf{O} \mathrm{Re} (\alpha_{\riem}) = \mathbf{O} \mathrm{Re} \left( \mathbf{T}_{\riem} \overline{\gamma} + {\mathbf{R}}_{\riem} {\mathbf{S}}  {\gamma} \right) = \mathrm{Re} \left( - \overline{\gamma} + \mathbf{T}_{\Omega} \overline{\gamma} + {\mathbf{R}}_{\Omega} {\mathbf{S}} {\gamma} \right)   
  \end{equation}
  
  We also require a unitarity identity. In the notation here we have the matrix expression 
  \[    \left( \begin{array}{c} \mathbf{T}_{\riem} \overline{\gamma} +  {\mathbf{R}}_{\riem} {\mathbf{S}} {\gamma}  \\ \mathbf{T}_{\Omega} \overline{\gamma} +  {\mathbf{R}}_{\Omega}  {\mathbf{S}}  {\gamma} \\ \overline{\mathbf{S}} \overline{\gamma} \end{array} \right) = \left( \begin{array}{ccc} - \mathbf{T}_\Omega & \mathbf{A} & \mathbf{R}_{\Omega} \\ - \mathbf{T}_{\riem} & \mathbf{B} & \mathbf{R}_{\riem} \\ \overline{\mathbf{S}} & \mathbf{C} & 0  \end{array} \right) \left( \begin{array}{c} - \overline{\gamma} \\ 0 \\  {\mathbf{S}} \gamma \end{array} \right).     \]
  This of course holds for any $\mathbf{A},\mathbf{B},\mathbf{C}$ because of the zero in the right-hand column vector; however, it was shown in \cite[Theorem 3.2.4]{Schippers_Staubach_scattering_IV}, that for certain specific $\mathbf{A},\mathbf{B},\mathbf{C}$ (which play no role here) the matrix is unitary. Thus we have that
  \begin{equation} \label{eq:Grunsky_g_example_temp2}
    \| \mathbf{T}_{\riem} \overline{\gamma} +  {\mathbf{R}}_{\riem} {\mathbf{S}} {\gamma}  \|^2 + \| \mathbf{T}_{\Omega} \overline{\gamma} +  {\mathbf{R}}_{\Omega}  {\mathbf{S}}  {\gamma}\|^2 + \| \overline{\mathbf{S}} \overline{\gamma} \|^2  = \|  \overline{\gamma} \|^2 + \|  {\mathbf{S}}\gamma \|^2.
  \end{equation}
  
  Putting equations (\ref{eq:Grunsky_g_example_temp}) and (\ref{eq:Grunsky_g_example_temp2}) together we obtain 
  \begin{align*}
  \mathfrak{m}_{\beta,c}(\riem) & = \| \mathrm{Re} (\mathbf{T}_{\riem} \overline{\gamma} +  {\mathbf{R}}_{\riem} {\mathbf{S}} {\gamma})  \|^2 + \| \mathrm{Re} (- \overline{\gamma} + \mathbf{T}_{\Omega} \overline{\gamma} + {\mathbf{R}}_{\Omega} {\mathbf{S}} {\gamma}) \|^2 \\ 
  & = \frac{1}{2} \left[ \|  \mathbf{T}_{\riem} \overline{\gamma} +  {\mathbf{R}}_{\riem} {\mathbf{S}} {\gamma}  \|^2 + \|- \overline{\gamma} + \mathbf{T}_{\Omega} \overline{\gamma} + {\mathbf{R}}_{\Omega} {\mathbf{S}} {\gamma} \|^2 \right] \\
  & = \| \overline{\gamma} \|^2 - \mathrm{Re} \left< \gamma,\mathbf{T}_{\Omega} \overline{\gamma} + {\mathbf{R}}_{\Omega} {\mathbf{S}} {\gamma} \right>. 
 \end{align*}
 \end{example}

 This can be thought of as a generalization of the operator form of the Grunsky inequalities to Riemann surfaces of higher genus.  Various versions of the Grunsky inequalities in higher genus were obtained by Shirazi \cite{Shirazi_thesis,Shirazi_Grunsky} and Schippers and Staubach  \cite{Schippers_Staubach_Plemelj,Schippers_Staubach_scattering_IV}. 
\end{subsection}
\end{section}

\end{document}